%
%
%
%

\documentclass{svjour3}                     
\smartqed  
\usepackage{graphicx}
%
%
\usepackage{amsfonts}
\usepackage{amssymb}
\usepackage{amsmath}
\usepackage{url}
\usepackage{color}
\usepackage[dvipsnames]{xcolor}
\usepackage{epstopdf}
\usepackage{bm}
\usepackage{cases}
\usepackage{paralist}
\usepackage{multirow}
\usepackage{array}
\usepackage{enumitem}
\usepackage{mathtools}
\usepackage{empheq}
 
\usepackage{subcaption}
\captionsetup{compatibility=false} 


\usepackage{natbib}
 \bibpunct[, ]{(}{)}{,}{a}{}{,}


%

%

\newcommand{\argmin}{\mathop{\text{arg\,min}}}

\newcommand{\st}{\mathop{\text{s.t.}}}

\newcolumntype{C}{>{\centering\arraybackslash}p{4em}}


%
\newcounter{observation}

\begin{document}

\title{Scenario Reduction Revisited: \\ Fundamental Limits and Guarantees}



\author{Napat~Rujeerapaiboon, Kilian~Schindler, Daniel~Kuhn, Wolfram~Wiesemann}


\institute{Napat~Rujeerapaiboon, Kilian~Schindler, Daniel~Kuhn\at
			  Risk Analytics and Optimization Chair\\
              \'Ecole Polytechnique F\'ed\'erale de Lausanne, Switzerland \\
              Tel.: +41 (0)21 693 00 36 \
              Fax: +41 (0)21 693 24 89\\
              \email{napat.rujeerapaiboon@epfl.ch, kilian.schindler@epfl.ch, daniel.kuhn@epfl.ch}
           \and
           Wolfram~Wiesemann\at
           Imperial College Business School\\
           Imperial College London, United Kingdom\\
           Tel.: +44 (0)20 7594 9150\\
           \email{ww@imperial.ac.uk}
}


\maketitle

\begin{abstract}
The goal of scenario reduction is to approximate a given discrete distribution with another discrete distribution that has fewer atoms. We distinguish continuous scenario reduction, where the new atoms may be chosen freely, and discrete scenario reduction, where the new atoms must be chosen from among the existing ones. Using the Wasserstein distance as measure of proximity between distributions, we identify those $n$-point distributions on the unit ball that are least susceptible to scenario reduction, {\em i.e.}, that have maximum Wasserstein distance to their closest $m$-point distributions for some prescribed $m<n$. We also provide sharp bounds on the added benefit of continuous over discrete scenario reduction. Finally, to our best knowledge, we propose the first polynomial-time constant-factor approximations for both discrete and continuous scenario reduction as well as the first exact exponential-time algorithms for continuous scenario reduction.



\keywords{scenario reduction, Wasserstein distance, constant-factor approximation algorithm, $k$-median clustering, $k$-means clustering}
\end{abstract}


\section{Introduction}
\label{section:intro}

The vast majority of numerical solution schemes in stochastic programming rely on a discrete approximation of the true (typically continuous) probability distribution governing the uncertain problem parameters. This discrete approximation is often generated by sampling from the true distribution. Alternatively, it could be constructed directly from real historical observations of the uncertain parameters. To obtain a faithful approximation for the true distribution, however, the discrete distribution must have a large number $n$ of support points or {\em scenarios}, which may render the underlying stochastic program computationally excruciating. 

An effective means to ease the computational burden is to rely on \textit{scenario reduction} pioneered by \cite{Dupacova2003}, which aims to approximate the initial $n$-point distribution with a simpler $m$-point distribution ($m< n$) that is as close as possible to the initial distribution with respect to a probability metric; see also \cite{Heitsch2003}. The modern stability theory of stochastic programming surveyed by \cite{Dupacova1990} and \cite{Roemisch2003} indicates that the Wasserstein distance may serve as a natural candidate for this probability metric.

Our interest in Wasserstein distance-based scenario reduction is also fuelled by recent progress in data-driven distributionally robust optimization, where it has been shown that the worst-case expectation of an uncertain cost over all distributions in a Wasserstein ball can often be computed efficiently via convex optimization \citep{MohajerinEsfahani2015, Zhao2015, Gao2016}. A Wasserstein ball is defined as the family of all distributions that are within a certain Wasserstein distance from a discrete reference distribution. As distributionally robust optimization problems over Wasserstein balls are harder to solve than their stochastic counterparts, we expect significant computational savings from replacing the initial $n$-point reference distribution with a new $m$-point reference distribution. The benefits of scenario reduction may be particularly striking for two-stage distributionally robust linear programs, which admit tight approximations as semidefinite programs \citep{Hanasusanto2016}.


Suppose now that the initial distribution is given by $\mathbb{P} = \sum_{i\in I} p_i \delta_{\bm\xi_i}$, where $\bm \xi_i\in\mathbb R^d$ and $p_i\in [0,1]$ represent the location and probability of the $i$-th scenario of $\mathbb{P}$ for $i\in I = \{ 1, \hdots, n \}$. Similarly, assume that the reduced target distribution is representable as $\mathbb{Q} = \sum_{j\in J} q_j \delta_{\bm\zeta_j}$, where $\bm \zeta_j\in\mathbb R^d$ and $q_j\in[0,1]$ stand for the location and probability of the $j$-th scenario of $\mathbb Q$ for $j\in J=\{1, \hdots, m \}$. 
Then, the type-$l$ Wasserstein distance between $\mathbb{P}$ and $\mathbb Q$ is defined through
\begin{equation*}
\begin{aligned}
	d_l (\mathbb{P}, \mathbb{Q}) = \left[ \min_{\mathbf{\Pi} \in \mathbb{R}_+^{n \times m}} \left\{ \sum_{i\in I} \sum_{j\in J} \pi_{ij} \Vert \bm\xi_i - \bm\zeta_j \Vert^l:
	\begin{array}{l}
		\sum_{j\in J} \pi_{ij} = p_i \ \forall i \in I  \\[2mm]
		\sum_{i\in I} \pi_{ij} = q_j \ \forall j \in J
	\end{array}
	\right\} \right]^{1/l},
\end{aligned}
\end{equation*}
where $l\geq 1$ and $\|\cdot\|$ denotes some norm on $\mathbb{R}^d$, see, {\em e.g.}, \cite{Heitsch2007} or \cite{Pflug2011}. The linear program in the definition of the Wasserstein distance can be viewed as a minimum-cost transportation problem, where $\pi_{ij}$ represents the amount of probability mass shipped from~$\bm\xi_i$ to~$\bm\zeta_j$ at unit transportation cost $\| \bm\xi_i - \bm\zeta_j \|^l$. Thus, $d_l^l (\mathbb{P}, \mathbb{Q})$ quantifies the minimum cost of moving the initial distribution $\mathbb{P}$ to the target distribution $\mathbb{Q}$. 

For any $\mathrm{\Xi}\subseteq \mathbb{R}^d$, we denote by $\mathcal{P}_\mathrm{E}(\mathrm{\Xi},n)$ the set of all uniform discrete distributions on $\mathrm{\Xi}$ with exactly $n$ distinct scenarios and by $\mathcal{P}(\mathrm{\Xi}, m)$ the set of all (not necessarily uniform) discrete distributions on $\mathrm{\Xi}$ with at most $m$ scenarios. We henceforth assume that $\mathbb{P}\in \mathcal{P}_\mathrm{E}(\mathbb{R}^d,n)$. 
This assumption is crucial for the simplicity of the results in Sections~\ref{section:wcdist} and~\ref{section:compare}, and it is almost surely satisfied whenever $\mathbb{P}$ is obtained via sampling from a continuous probability distribution. Hence, we can think of $\mathbb{P}$ as an {\em empirical distribution}. To remind us of this interpretation, we will henceforth denote the initial distribution by $\hat{\mathbb{P}}_n$. Note that the pairwise difference of the scenarios can always be enforced by slightly perturbing their locations, while the uniformity of their probabilities can be enforced by decomposing the scenarios into clusters of close but mutually distinct sub-scenarios with (smaller) uniform probabilities.

We are now ready to introduce the \textit{continuous scenario reduction problem}
\begin{equation*}
	C_l(\hat{\mathbb{P}}_n, m) \ = \ \min_{\mathbb{Q}} \ \left\{ d_l(\hat{\mathbb{P}}_n, \mathbb{Q}): \, \mathbb{Q} \in \mathcal{P}(\mathbb{R}^d,m) \right\},
\end{equation*}
where the new scenarios $\bm\zeta_j$, $j \in J$, of the target distribution $\mathbb{Q}$ may be chosen freely from within $\mathbb{R}^d$, as well as the \emph{discrete scenario reduction problem}
\begin{equation*}
	D_l(\hat{\mathbb{P}}_n, m) \ = \ \min_{\mathbb{Q}} \ \left\{ d_l(\hat{\mathbb{P}}_n, \mathbb{Q}): \, \mathbb{Q} \in \mathcal{P}(\text{supp}(\hat{\mathbb{P}}_n),m) \right\},
\end{equation*}
where the new scenarios must be chosen from within the support of the empirical distribution, which is given by the finite set $\text{supp}(\hat{\mathbb{P}}_n)=\{ \bm\xi_i: i \in I \}$. Even though the continuous scenario reduction problem offers more flexibility and is therefore guaranteed to find (weakly) better approximations to the initial empirical distribution, to our best knowledge, the existing stochastic programming literature has exclusively focused on the discrete scenario reduction problem. 

Note that if the support points $\bm\zeta_j$, $j \in J$, are fixed, then both scenario reduction problems simplify to a linear program over the probabilities $q_j$, $j\in J$, which admits an explicit solution \cite[Theorem~2]{Dupacova2003}. Otherwise, however, both problems are intractable. Indeed, if $l = 1$, then the discrete scenario reduction problem represents a metric $k$-median problem with $k = m$, which was shown to be $\mathcal{NP}$-hard by \citet{Kariv1979}. If $l = 2$ and distances in $\mathbb R^d$ are measured by the 2-norm, on the other hand, then the continuous scenario reduction problem constitutes a $k$-means clustering problem with $k = m$, which is $\mathcal{NP}$-hard even if $d = 2$ or $m = 2$; see \cite{Mahajan2009} and \cite{Aloise2009}. 

\cite{Heitsch2003} have shown that the discrete scenario reduction problem admits a reformulation as a mixed-integer linear program (MILP), which can be solved to global optimality for $n\lesssim 10^3$ using off-the-shelf solvers. For larger instances, however, one must resort to approximation algorithms. Most large-scale discrete scenario reduction problems are nowadays solved with a greedy heuristic that was originally devised by \cite{Dupacova2003} and further refined by \cite{Heitsch2003}. For example, this heuristic is routinely used for scenario (tree) reduction in the context of power systems operations, see, {\em e.g.}, \cite{Romisch2010} or \cite{Morales2009} and the references therein. Despite its practical success, we will show in Section~\ref{section:algorithm} that this heuristic fails to provide a constant-factor approximation for the discrete scenario reduction problem.

This paper extends the theory of scenario reduction along several dimensions.
\begin{itemize}
	\item[(i)] We establish fundamental performance guarantees for continuous scenario reduction when $l\in\{1,2\}$, 
	{\em i.e.}, we show that the Wasserstein distance of the initial $n$-point distribution to its nearest $m$-point distribution is bounded by $\sqrt{\frac{n-m}{n-1}}$ across all initial distributions on the unit ball in $\mathbb R^d$. We show that for $l=2$ this worst-case performance is attained by some initial distribution, which we construct explicitly. We also provide evidence indicating that this worst-case performance reflects the norm rather than the exception in high dimensions $d$. Finally, we provide a lower bound on the worst-case performance for $l=1$.
    \item[(ii)]
    We analyze the loss of optimality incurred by solving the discrete scenario reduction problem instead of its continuous counterpart. Specifically, we demonstrate that the ratio $D_l(\hat{\mathbb{P}}_n,m)/C_l(\hat{\mathbb{P}}_n,m)$ is bounded by $\sqrt{2}$ for $l=2$ and by~2 for $l=1$. We also show that these bounds are essentially tight.
    \item[(iii)]
    We showcase the intimate relation between scenario reduction and $k$-means clustering. By leveraging existing constant-factor approximation algorithms for $k$-median clustering problems due to \cite{Arya2004} and the new performance bounds from~(ii), we develop the first polynomial-time constant-factor approximation algorithms for both continuous and discrete scenario reduction. We also show that these algorithms can be warmstarted using the greedy heuristic by \cite{Dupacova2003} to improve practical performance.
    \item[(iv)]
    We present exact mixed-integer programming reformulations for the continuous scenario reduction problem. 
\end{itemize}



Continuous scenario reduction is intimately related to the optimal quantization of probability distributions, where one seeks an $m$-point distribution approximating a non-discrete initial distribution. Research efforts in this domain have mainly focused on the asymptotic behavior of the quantization problem as $m$ tends to infinity, see \cite{Graf2000}. The ramifications of this stream of literature for stochastic programming are discussed by \cite{Pflug2011}. Techniques familiar from scenario reduction lend themselves also for scenario generation, where one aims to construct a scenario tree with a prescribed branching structure that approximates a given stochastic process with respect to a probability metric, see, {\em e.g.}, \cite{Pflug2001} and \cite{Hochreiter2007}.

The rest of this paper unfolds as follows. Section~\ref{section:wcdist} seeks to identify $n$-point distributions on the unit ball that are least susceptible to scenario reduction, {\em i.e.}, that have maximum Wasserstein distance to their closest $m$-point distributions, and Section~\ref{section:compare} discusses sharp bounds on the added benefit of continuous over discrete scenario reduction. Section~\ref{section:algorithm} presents exact exponential-time algorithms as well as polynomial-time constant-factor approximations for scenario reduction. Section~\ref{section:experiment} reports on numerical results for a color quantization experiment. 
Unless otherwise specified, below we will always work with the 2-norm on $\mathbb R^d$. 

\paragraph{Notation:} We let $\mathbb I$ be the identity matrix, $\mathbf e$ the vector of all ones and $\mathbf e_i$ the $i$-th standard basis vector of appropriate dimensions. The $ij$-th element of a matrix $\mathbf{A}$ is denoted by $a_{ij}$. For $\mathbf{A}$ and $\mathbf{B}$ in the space $\mathbb S^n$ of symmetric $n \times n$ matrices, the relation $\mathbf{A} \succeq \mathbf{B}$ means that $\mathbf{A} - \mathbf{B}$ is positive semidefinite. Generic norms are denoted by $\Vert \cdot \Vert$, while $\Vert \cdot \Vert_p$ stands for the $p$-norm, $p\geq 1$. For $\mathrm{\Xi}\subseteq \mathbb{R}^d$, we define $\mathcal{P}(\mathrm{\Xi},m)$ as the set of all probability distributions supported on at most $m$ points in~$\mathrm{\Xi}$ and $\mathcal{P}_\mathrm{E}(\mathrm{\Xi},n)$ as the set of all \emph{uniform} distributions supported on exactly $n$ {\em distinct} points in~$\mathrm{\Xi}$. The support of a probability distribution~$\mathbb{P}$ is denoted by $\text{supp}(\mathbb{P})$, and the Dirac distribution concentrating unit mass at $\bm\xi$ is denoted by $\delta_{\bm\xi}$.

\section{Fundamental Limits of Scenario Reduction}
\label{section:wcdist}

In this section we characterize the Wasserstein distance $C_l (\hat{\mathbb{P}}_n, m)$ between an $n$-point empirical distribution $\hat{\mathbb{P}}_n = \frac{1}{n} \sum_{i=1}^n \delta_{\bm{\xi}_i}$ and its continuously reduced optimal $m$-point distribution $\mathbb{Q} \in \mathcal{P} (\mathbb{R}^d, m)$. Since the positive homogeneity of the Wasserstein distance $d_l$ implies that $C_l (\hat{\mathbb{P}}'_n, m) = \lambda \cdot C_l (\hat{\mathbb{P}}_n, m)$ for the scaled distribution $\hat{\mathbb{P}}'_n = \frac{1}{n} \sum_{i=1}^n \delta_{\lambda \bm{\xi}_i}$, $\lambda \in \mathbb{R}_+$, we restrict ourselves to empirical distributions $\hat{\mathbb{P}}_n$ whose scenarios satisfy $\left \lVert \bm{\xi}_i \right \rVert_2 \leq 1$, $i = 1, \ldots, n$. We thus want to quantify
\begin{equation}
\label{opt:wcwd}
\begin{aligned}
	\overline{C}_l(n,m) = \max_{\hat{\mathbb{P}}_n \in \mathcal{P}_\mathrm{E}(\mathbb{R}^d, n)} 
    \left\{ 	
    	C_l (\hat{\mathbb{P}}_n, m) \, : \, \Vert \bm\xi \Vert_2 \leq 1 \;\;
        \forall \bm\xi \in \text{supp}(\hat{\mathbb{P}}_n)
    \right\},
\end{aligned}
\end{equation}
which amounts to the worst-case ({\em i.e.}, largest) Wasserstein distance between any $n$-point empirical distribution $\hat{\mathbb{P}}_n$ over the unit ball and its optimally selected continuous $m$-point scenario reduction. By construction, this worst-case distance satisfies $\overline{C}_l(n,m) \geq 0$, and the lower bound is attained whenever $n = m$. One also verifies that $\overline{C}_l(n,m) \leq \overline{C}_l (n,1) \leq 1$ since the Wasserstein distance to the Dirac distribution $\delta_{\bm{0}}$ is bounded above by $1$. Our goal is to derive possibly tight upper bounds on $\overline{C}_l(n,m)$ for the Wasserstein distances of type $l \in \{ 1, 2 \}$.

In the following, we denote by $\mathfrak{P}(I,m)$ the family of all $m$-set partitions of the index set $I$, {\em i.e.},
\begin{equation*}
	 \mathfrak{P}(I,m) = \big\{ \{I_1, \hdots, I_m \} \; : \;
        \emptyset \neq I_1, \hdots, I_m \subseteq I, \;\; 
        \cup_j I_j = I, \;\; I_i \cap I_j = \emptyset \;\; \forall i \neq j \big\},
\end{equation*}
and an element of this set ({\em i.e.}~a specific $m$-set partition) as $\{ I_j \} \in \mathfrak{P}(I,m)$. Our derivations will make extensive use of the following theorem. 

\begin{theorem}
\label{thm:partition}
For any type-$l$ Wasserstein distance induced by any norm $\Vert \cdot \Vert$, the continuous scenario reduction problem can be reformulated as
\begin{equation}
\label{eq:voronoi-1}
	C_l(\hat{\mathbb{P}}_n, m) = \min_{\{ I_j \} \in \mathfrak{P}(I,m)} \, \left[ \frac{1}{n} \sum_{j \in J} \min_{\bm\zeta_j \in \mathbb{R}^d} \sum_{i \in I_j} \Vert \bm\xi_i - \bm\zeta_j \Vert^l \right]^{1/l}.
\end{equation}
\end{theorem}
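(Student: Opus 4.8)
The plan is to unfold the definition of $C_l(\hat{\mathbb{P}}_n, m)$ into a joint minimization over the target locations $\bm\zeta_j$, the target probabilities $q_j$, and the transportation plan $\mathbf{\Pi}$, and then to strip away variables one layer at a time. Raising to the power $l$ and using $p_i = 1/n$, the problem reads
\begin{equation*}
	C_l^l(\hat{\mathbb{P}}_n, m) = \min_{\{\bm\zeta_j\},\,\{q_j\}} \; \min_{\mathbf{\Pi} \geq 0} \left\{ \sum_{i \in I}\sum_{j \in J} \pi_{ij} \Vert \bm\xi_i - \bm\zeta_j \Vert^l : \sum_{j \in J} \pi_{ij} = \tfrac{1}{n}\;\forall i,\; \sum_{i \in I}\pi_{ij} = q_j\;\forall j \right\},
\end{equation*}
where the outer minimization ranges over $q_j \geq 0$ with $\sum_j q_j = 1$ and over $\bm\zeta_j \in \mathbb{R}^d$.

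First I would eliminate the probabilities $q_j$. The constraint $\sum_i \pi_{ij} = q_j$ merely \emph{defines} $q_j$ in terms of $\mathbf{\Pi}$, and for any nonnegative $\mathbf{\Pi}$ with row sums $1/n$ the induced values $q_j = \sum_i \pi_{ij}$ automatically satisfy $q_j \geq 0$ and $\sum_j q_j = \sum_i \sum_j \pi_{ij} = 1$. Hence the minimization over $\{q_j\}$ is redundant, and the only surviving constraints on $\mathbf{\Pi}$ are $\mathbf{\Pi} \geq 0$ and $\sum_{j} \pi_{ij} = 1/n$ for all $i$. I regard this as the key step, since it removes every coupling (capacity) constraint limiting how much mass each target $\bm\zeta_j$ may receive, thereby decoupling the transportation problem across source points.

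Next, for fixed locations $\{\bm\zeta_j\}$, I would solve the resulting transportation problem in closed form: because the rows of $\mathbf{\Pi}$ are now independent, the inner minimum factorizes over $i$, and each source optimally ships all of its mass $1/n$ to a nearest target, yielding $\min_{\mathbf{\Pi}}\sum_{ij}\pi_{ij}\Vert\bm\xi_i-\bm\zeta_j\Vert^l = \frac{1}{n}\sum_{i\in I}\min_{j\in J}\Vert\bm\xi_i-\bm\zeta_j\Vert^l$. It then remains to establish the clustering identity $\min_{\{\bm\zeta_j\}}\sum_i \min_j \Vert\bm\xi_i-\bm\zeta_j\Vert^l = \min_{\{I_j\}\in\mathfrak{P}(I,m)}\sum_j \min_{\bm\zeta_j}\sum_{i\in I_j}\Vert\bm\xi_i-\bm\zeta_j\Vert^l$. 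The inequality ``$\leq$'' is immediate: any partition together with its optimal per-cluster centers furnishes a feasible choice of centers on the left, and the nearest-center cost only undercuts the within-cluster cost. For ``$\geq$'' I would take optimal centers on the left and assign each index to a nearest center, inducing a partition of $I$; the delicate point is that this partition may have \emph{fewer} than $m$ nonempty blocks, so the induced assignment need not lie in $\mathfrak{P}(I,m)$.

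The main obstacle I anticipate is precisely this nonemptiness technicality. To dispose of it I would invoke $n > m$: any partition of $I$ into fewer than $m$ nonempty blocks can be refined into exactly $m$ nonempty blocks (there are enough points), and splitting a block while re-optimizing its center can only decrease the objective, because the same center may be retained for both sub-blocks. Thus restricting to $\mathfrak{P}(I,m)$ loses nothing and the two sides coincide. A minor point worth verifying en route is that each inner minimization $\min_{\bm\zeta}\sum_{i\in I_j}\Vert\bm\xi_i-\bm\zeta\Vert^l$ is attained: for $l\geq 1$ the map $\bm\zeta\mapsto\sum_{i}\Vert\bm\xi_i-\bm\zeta\Vert^l$ is convex and coercive, so a minimizer exists and all quantities are finite. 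Taking $l$-th roots at the end recovers~\eqref{eq:voronoi-1}.
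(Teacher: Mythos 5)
Your proposal is correct and follows essentially the same route as the paper: both proofs pass through the intermediate nearest-center formulation $C_l(\hat{\mathbb{P}}_n,m)=\min_{\{\bm\zeta_j\}\subseteq\mathbb{R}^d}\bigl[\tfrac{1}{n}\sum_{i\in I}\min_{j\in J}\Vert\bm\xi_i-\bm\zeta_j\Vert^l\bigr]^{1/l}$ and then identify centers with partition cells, the only difference being that the paper obtains this formulation by citing Theorem~2 of Dupa{\v{c}}ov{\'a} et al.\ whereas you derive it directly by eliminating the probabilities $q_j$ and decoupling the transportation linear program row by row. Your explicit handling of the empty-cell issue (refining an assignment-induced partition with fewer than $m$ nonempty blocks, which cannot increase the cost) fills in a detail that the paper's ``one readily verifies'' leaves implicit.
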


Problem~\eqref{eq:voronoi-1} can be interpreted as a Voronoi partitioning problem that asks for a Voronoi decomposition of $\mathbb{R}^d$ into $m$ cells whose Voronoi centroids $\bm{\zeta}_1, \ldots, \bm{\zeta}_m$ minimize the cumulative $l$-th powers of the distances to $n$ prespecified points $\bm{\xi}_1, \ldots, \bm{\xi}_n$.

\begin{proof} \emph{of Theorem~\ref{thm:partition}} $\,$
Theorem~2 of \citet{Dupacova2003} implies that the smallest Wasserstein distance between the empirical distribution $\hat{\mathbb{P}}_n \in \mathcal{P}_\mathrm{E}(\mathbb{R}^d, n)$ and any distribution $\mathbb{Q}$ supported on a finite set $\mathrm{\Xi} \subset \mathbb{R}^d$ amounts to
    \begin{equation*} 
		\min_{\mathbb{Q} \in \mathcal{P}(\mathrm{\Xi},\infty)} d_l(\hat{\mathbb{P}}_n, \mathbb{Q}) = \left[ \frac{1}{n}\sum_{i \in I} \min_{\bm\zeta \in \mathrm{\Xi}}  \Vert \bm\xi_i - \bm\zeta \Vert^l\right]^{1/l},
	\end{equation*}
	where $\mathcal{P}(\mathrm{\Xi},\infty)$ denotes the set of all probability distributions supported on the finite set $\mathrm{\Xi}$. The continuous scenario reduction problem $C_l(\hat{\mathbb{P}}_n, m)$ selects the set $\mathrm{\Xi}^\star$ that minimizes this quantity over all sets in $\mathrm{\Xi} \subset \mathbb{R}^d$ with $| \mathrm{\Xi} | = m$ elements:
    \begin{equation}
    \label{eq:voronoi-2}
    	C_l(\hat{\mathbb{P}}_n, m) = \min_{\{ \bm\zeta_j\} \subseteq  \mathbb{R}^d} \, \left[ \frac{1}{n} \sum_{i \in I} \min_{j \in J} \Vert \bm\xi_i - \bm\zeta_j \Vert^l \right]^{1/l}.
    \end{equation}
One readily verifies that any optimal solution $\{ \bm{\zeta}_1^\star, \ldots, \bm{\zeta}_m^\star \}$ to problem~\eqref{eq:voronoi-2} corresponds to an optimal solution $\{ I_1^\star, \ldots, I_m^\star \}$ to problem~\eqref{eq:voronoi-1} with the same objective value if we identify the set $I_j^\star$ with all observations $\bm\xi_i$ that are closer to $\bm\zeta_j^\star$ than any other $\bm\zeta_{j'}^\star$ (ties may be broken arbitrarily). Likewise, any optimal solution $\{ I_1^\star, \ldots, I_m^\star \}$ to problem~\eqref{eq:voronoi-1} with inner minimizers $\{ \bm{\zeta}_1^\star, \ldots, \bm{\zeta}_m^\star \}$ translates into an optimal solution $\{ \bm{\zeta}_1^\star, \ldots, \bm{\zeta}_m^\star \}$ to problem~\eqref{eq:voronoi-2} with the same objective value.
    \qed
\end{proof}

\begin{remark} \emph{(Minimizers of~\eqref{eq:voronoi-1})}
\label{rem:center}
For $l = 2$, the inner minimum corresponding to the set $I_j$ is attained by the \emph{mean} $\bm\zeta^\star_j = \text{mean} (I_j) = \frac{1}{\vert I_j \vert} \sum_{i \in I_j} \bm{\xi}_i$. Likewise, for $l = 1$, the inner minimum corresponding to the set $I_j$ is attained by any \emph{geometric median}
\begin{equation*}
\bm\zeta^\star_j = \text{gmed}(I_j) \in \mathop{\arg \min}_{\bm\zeta_j \in \mathbb{R}^d} \sum_{i \in I_j} \Vert \bm\xi_i - \bm\zeta_j \Vert,
\end{equation*}
which can be determined efficiently by solving a second-order cone program whenever a $p$-norm with rational $p \geq 1$ is considered (\cite{Alizadeh2003}).
\end{remark}

The rest of this section derives tight upper bounds on $\overline{C}_l(n,m)$ for Wasserstein distances of type $l = 2$ (Section~\ref{section:wcdist-2}) as well as upper and lower bounds for Wasserstein distances of type $l = 1$
(Section~\ref{section:wcdist-1}). We summarize and discuss our findings in Section~\ref{subsec:discuss}.



\subsection{Fundamental Limits for the Type-2 Wasserstein Distance}
\label{section:wcdist-2}

We now derive a revised upper bound on $\overline{C}_l(n,m)$ for the type-$2$ Wasserstein distance. The result relies on auxiliary lemmas that are relegated to the appendix.

\begin{theorem}
\label{thm:wcwd-2}
The worst-case type-$2$ Wasserstein distance satisfies $\overline{C}_2(n,m) \leq \sqrt{\frac{n-m}{n-1}}$.
\end{theorem}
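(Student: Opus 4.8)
The plan is to combine the partition reformulation of Theorem~\ref{thm:partition} with a short averaging argument. By Theorem~\ref{thm:partition} and Remark~\ref{rem:center}, for $l=2$ we have
\begin{equation*}
C_2(\hat{\mathbb{P}}_n,m)^2 = \min_{\{I_j\}\in\mathfrak{P}(I,m)}\frac{1}{n}\sum_{j\in J}\sum_{i\in I_j}\bigl\lVert\bm\xi_i-\text{mean}(I_j)\bigr\rVert_2^2,
\end{equation*}
so it suffices to show that for every configuration $\bm\xi_1,\dots,\bm\xi_n$ with $\lVert\bm\xi_i\rVert_2\le1$ there exists a partition into $m$ cells whose within-cluster sum of squares is at most $\tfrac{n(n-m)}{n-1}$. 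The first step is to rewrite each cell's contribution purely in terms of pairwise distances via the standard identity $\sum_{i\in I_j}\lVert\bm\xi_i-\text{mean}(I_j)\rVert_2^2=\tfrac{1}{\lvert I_j\rvert}\sum_{\{i,i'\}\subseteq I_j}\lVert\bm\xi_i-\bm\xi_{i'}\rVert_2^2$, which recasts the objective as a weighted sum over intra-cluster pairs.

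Next I would fix any block sizes $n_1,\dots,n_m\ge1$ with $\sum_{j}n_j=n$ (for instance as balanced as possible) and consider a \emph{uniformly random} assignment of the points into labelled blocks of these sizes; since each $n_j\ge1$, every such realization is a valid element of $\mathfrak{P}(I,m)$. For a fixed pair $\{i,i'\}$ the probability of landing together in block $j$ is $\tfrac{n_j(n_j-1)}{n(n-1)}$, while such a co-located pair contributes weight $1/n_j$ to the objective above. Summing the product $\tfrac{n_j(n_j-1)}{n(n-1)}\cdot\tfrac{1}{n_j}$ over $j$ collapses to $\tfrac{1}{n(n-1)}\sum_{j}(n_j-1)=\tfrac{n-m}{n(n-1)}$, which is crucially \emph{independent} of the chosen block sizes. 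Hence the expected within-cluster sum of squares equals
\begin{equation*}
\mathbb{E}\Biggl[\sum_{j\in J}\frac{1}{\lvert I_j\rvert}\sum_{\{i,i'\}\subseteq I_j}\lVert\bm\xi_i-\bm\xi_{i'}\rVert_2^2\Biggr]=\frac{n-m}{n(n-1)}\sum_{\{i,i'\}\subseteq I}\lVert\bm\xi_i-\bm\xi_{i'}\rVert_2^2,
\end{equation*}
and the minimum over $\mathfrak{P}(I,m)$ is at most this average, so it only remains to bound the total pairwise dispersion.

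The final step uses $\sum_{\{i,i'\}\subseteq I}\lVert\bm\xi_i-\bm\xi_{i'}\rVert_2^2 = n\sum_{i\in I}\lVert\bm\xi_i\rVert_2^2-\bigl\lVert\sum_{i\in I}\bm\xi_i\bigr\rVert_2^2\le n\cdot n=n^2$, where the inequality drops the nonnegative second term and invokes $\lVert\bm\xi_i\rVert_2\le1$. Combining the displays yields a partition with within-cluster sum of squares at most $\tfrac{n-m}{n(n-1)}\cdot n^2=\tfrac{n(n-m)}{n-1}$, whence $C_2(\hat{\mathbb{P}}_n,m)^2\le\tfrac{n-m}{n-1}$ for every admissible $\hat{\mathbb{P}}_n$, which is the claim. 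I do not expect a serious obstacle; the one point requiring care is that $n$ need not be divisible by $m$, but the size-independence of the coefficient $\tfrac{n-m}{n(n-1)}$ --- a consequence of $\sum_{j}(n_j-1)=n-m$ --- lets the averaging argument go through for arbitrary, possibly unbalanced, block sizes, so no divisibility assumption is needed. As a sanity check, both inequalities in the last step are simultaneously tight precisely when $\sum_{i}\bm\xi_i=\bm0$ and every $\lVert\bm\xi_i\rVert_2=1$, i.e.\ for the centered regular simplex inscribed in the unit sphere, where all pairs are equidistant and the bound holds with equality, matching the extremal distribution announced in the introduction.
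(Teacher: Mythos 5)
Your proof is correct, and it takes a genuinely different route from the paper's. The paper bounds $\overline{C}_2(n,m)$ by lifting problem~\eqref{opt:wcwd-2} to the space of Gram matrices, relaxing the rank constraint to obtain the semidefinite program~\eqref{opt:symmetric_sdp}, symmetrizing by permutation-averaging (Lemma~\ref{lem:symmetry}) to restrict attention to matrices of the form $\alpha\mathbb{I}+\beta\mathbf{e}\mathbf{e}^\top$, and then solving the resulting three-variable linear program~\eqref{opt:almost_there} with the help of the eigenvalue computation in Lemma~\ref{lem:eigen}. You instead stay in primal form and exhibit, for each fixed configuration, a good partition via a probabilistic averaging argument: for a uniformly random assignment into labelled blocks of prescribed sizes $n_j\geq 1$, the within-cluster sum of squares has expectation $\frac{n-m}{n(n-1)}\sum_{\{i,i'\}\subseteq I}\Vert\bm\xi_i-\bm\xi_{i'}\Vert_2^2$, the key cancellation being that $\frac{n_j(n_j-1)}{n(n-1)}\cdot\frac{1}{n_j}$ sums over $j$ to a size-independent constant, and the total pairwise dispersion is at most $n^2$ on the unit ball. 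All individual steps check out: the variance decomposition $\sum_{i\in I_j}\Vert\bm\xi_i-\text{mean}(I_j)\Vert_2^2=\frac{1}{\vert I_j\vert}\sum_{\{i,i'\}\subseteq I_j}\Vert\bm\xi_i-\bm\xi_{i'}\Vert_2^2$, the co-location probability $\frac{n_j(n_j-1)}{n(n-1)}$, the identity $\sum_{\{i,i'\}\subseteq I}\Vert\bm\xi_i-\bm\xi_{i'}\Vert_2^2=n\sum_{i}\Vert\bm\xi_i\Vert_2^2-\Vert\sum_{i}\bm\xi_i\Vert_2^2$, and the fact that the minimum over $\mathfrak{P}(I,m)$ is at most the expectation because every realization has nonempty cells. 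Your argument is more elementary---it needs neither the rank relaxation nor the two appendix lemmas---and it isolates the same extremal structure (centered, unit-norm, equidistant points) that the paper's construction realizes. What the paper's heavier machinery buys is reuse: the explicit optimal Gram matrix $\mathbf{S}^\star=\frac{n}{n-1}\mathbb{I}-\frac{1}{n-1}\mathbf{e}\mathbf{e}^\top$ read off from~\eqref{opt:almost_there} directly drives the tightness construction in Proposition~\ref{prop:wcwd-2}, and the function $f(\mathbf{S})$ defined through~\eqref{opt:symmetric_sdp} is reused in the proof of Proposition~\ref{prop:wcwd-normal}; with your approach those results would require separate arguments.
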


Note that whenever the reduced distribution satisfies $m > 1$, the bound of Theorem~\ref{thm:wcwd-2} is strictly tighter than the na\"ive bound of $1$ from the previous section.

\begin{proof} \emph{of Theorem~\ref{thm:wcwd-2}} $\,$
From Theorem~\ref{thm:partition} and Remark~\ref{rem:center} we observe that
	\begin{equation*}
		\raisebox{2.5mm}{$\overline{C}_2 (n,m) \; = \;$}
		\begin{array}{c@{\quad}l}
		\displaystyle \max_{ \{\bm\xi_i\} \, \subseteq \, \mathbb{R}^d} & \displaystyle \min_{\{ I_j \} \in \mathfrak{P}(I,m)} \left[ \frac{1}{n} \sum_{j \in J} \sum_{i \in I_j} \left\Vert \bm\xi_i - \text{mean}(I_j) \right\Vert_2^2 \right]^{1/2} \\[6mm]
		\displaystyle \text{s.t.} & \displaystyle \Vert \bm\xi_i \Vert_2 \leq 1 \quad \forall i \in I.
	\end{array}
	\end{equation*}
	Introducing the epigraphical variable $\tau$, this problem can be expressed as
	\begin{equation}
		\label{opt:wcwd-2}
		\raisebox{7.75mm}{$\overline{C}_2^2 (n,m) \; = \;$}
		\begin{array}{c@{\quad}l@{\quad}l}
		\displaystyle \max_{\tau \in \mathbb{R}, \; \{\bm\xi_i\} \, \subseteq \, \mathbb{R}^d} & \displaystyle \frac{1}{n} \tau \\[2mm]
		\displaystyle \text{s.t.} & \displaystyle \tau \leq \sum_{j \in J} \sum_{i \in I_j} \left\Vert \bm\xi_i - \text{mean}(I_j) \right\Vert_2^2 & \displaystyle \forall \{ I_j \} \in \mathfrak{P}(I,m) \\[5mm]
		& \displaystyle \bm\xi_i{}^\top \bm\xi_i \leq 1 & \forall i \in I.
	\end{array}
	\end{equation}
	For each $j \in J$ and $i \in I_j$, the squared norm in the first constraint of~\eqref{opt:wcwd-2} can be expressed in terms of the inner products between pairs of empirical observations: 
	\begin{equation*}
	\mspace{-20mu}
	\begin{aligned}
		\big\| \bm\xi_i - & \text{mean}(I_j) \big\|_2^2 \;\; = \;\;
		\frac{1}{\vert I_j \vert^2} \Big\Vert \vert I_j \vert\bm\xi_i - \sum_{k \in I_j} \bm\xi_k \Big\Vert_2^2 \\
		&= \;\; \frac{1}{\vert I_j \vert^2} \Bigg( \vert I_j \vert^2 \bm\xi_i{}^\top \bm\xi_i - 2\vert I_j \vert \sum_{k \in I_j} \bm\xi_i{}^\top\bm\xi_k + \sum_{k \in I_j} \bm\xi_k{}^\top\bm\xi_k + \sum_{\substack{k,k^\prime \in I_j \\ k \neq k^\prime}} \bm\xi_{k}{}^\top \bm\xi_{k^\prime} \Bigg).
	\end{aligned}
	\end{equation*}
	Introducing the Gram matrix 
	\begin{equation}
    \label{eq:gram}
		\mathbf{S} = [\bm\xi_1, \hdots, \bm\xi_n]^\top [\bm\xi_1, \hdots, \bm\xi_n] \in \mathbb{S}^n, \;\;
		\mathbf{S} \succeq \bm{0} \, \text{ and } \text{rank}(\mathbf{S}) \leq \min \{ n, d \}
	\end{equation}
	then allows us to simplify the first constraint in~\eqref{opt:wcwd-2} to
	\begin{equation*}
\tau \leq \sum_{j \in J} \frac{1}{\vert I_j \vert^2} \sum_{i \in I_j} \Bigg( \vert I_j \vert^2 s_{ii} - 2\vert I_j \vert \sum_{k \in I_j} s_{ik} + \sum_{k \in I_j} s_{kk} + \sum_{\substack{k,k^\prime \in I_j \\ k \neq k^\prime}} s_{k k^\prime} \Bigg).
	\end{equation*}
	Note that the second constraint in problem~\eqref{opt:wcwd-2} can now be expressed as $s_{ii} \leq 1$, and hence all constraints in~\eqref{opt:wcwd-2} are linear in the Gram matrix $\mathbf{S}$.

Our discussion implies that we obtain an upper bound on $\overline{C}_2(n,m)$ by reformulating problem~\eqref{opt:wcwd-2} as a semidefinite program in terms of the Gram matrix~$\mathbf{S}$
	\begin{equation}
    \label{opt:symmetric_sdp}
		\begin{array}{c@{\quad}l@{\quad}l}
		\displaystyle \max_{\tau \in \mathbb{R}, \; \mathbf{S} \in \mathbb{S}^n} & \displaystyle \frac{1}{n} \tau \\
		\displaystyle \text{s.t.} & \displaystyle \tau \leq \sum_{j \in J} \frac{1}{\vert I_j \vert^2} \sum_{i \in I_j} \Bigg( \vert I_j \vert^2 s_{ii} - 2\vert I_j \vert \sum_{k \in I_j} s_{ik} + \sum_{k \in I_j} s_{kk} + \sum_{\substack{k,k^\prime \in I_j \\ k \neq k^\prime}} s_{k k^\prime} \Bigg) \\[7mm]
		& \displaystyle \mspace{405mu} \forall \{ I_j \} \in \mathfrak{P}(I,m) \\[1mm]
		& \displaystyle \mathbf{S} \succeq \bm{0}, \;\; s_{ii} \leq 1 \quad \forall i \in I,
	\end{array}
	\end{equation}
where we have relaxed the rank condition in the definition of the Gram matrix~\eqref{eq:gram}. Lemma~\ref{lem:symmetry} in the appendix shows that~\eqref{opt:symmetric_sdp} has an optimal solution $(\tau^\star, \mathbf{S}^\star)$ that satisfies $\mathbf{S}^\star = \alpha \mathbb{I} + \beta \mathbf{e}\mathbf{e}^\top$ for some $\alpha, \beta \in \mathbb{R}$. Moreover, Lemma~\ref{lem:eigen} in the appendix shows that any matrix of the form $\mathbf{S} = \alpha \mathbb{I} + \beta \bm{1}\bm{1}^\top$ is positive semidefinite if and only if $\alpha \geq 0$ and $\alpha + n \beta \geq 0$. We thus conclude that~\eqref{opt:symmetric_sdp} can be reformulated as
	\begin{equation}
	\label{opt:almost_there}
		\begin{array}{c@{\quad}l@{\quad}l}
		\displaystyle \max_{\tau, \alpha, \beta \in \mathbb{R}} & \displaystyle \frac{1}{n} \tau \\
		\displaystyle \text{s.t.} & \displaystyle \tau \leq (n - m) \alpha, \;\; \alpha + \beta \leq 1 \\[1mm]
		& \displaystyle \alpha \geq 0, \;\; \alpha + n \beta \geq 0,
	\end{array}
	\end{equation}
where the first constraint follows from the fact that for any set $I_j$ in~\eqref{opt:symmetric_sdp}, we have
	\begin{equation*}
		\frac{1}{\vert I_j \vert^2} \sum_{i \in I_j} \Bigg( \vert I_j \vert^2 (\alpha + \beta) - 2\vert I_j \vert \left( \alpha + \vert I_j \vert \beta \right) + \sum_{k \in I_j} (\alpha + \beta) + \sum_{\substack{k,k^\prime \in I_j \\ k \neq k^\prime}} \beta \Bigg) = (\vert I_j \vert - 1) \alpha,
	\end{equation*}
	and $\sum_{j \in J} (\vert I_j \vert - 1) \alpha = (n-m) \alpha$ since $\vert I \vert = n$ and $\vert J \vert = m$. The statement of the theorem now follows since problem~\eqref{opt:almost_there} is optimized by $\tau^\star = \frac{n(n-m)}{(n-1)}$, $\alpha^\star = \frac{n}{n-1}$ and $\beta^\star = \frac{-1}{n-1}$.
\qed
\end{proof}

The proof of Theorem~\ref{thm:wcwd-2} shows that the upper bound $\sqrt{\frac{n-m}{n-1}}$ on the worst-case type-$2$ Wasserstein distance $\overline{C}_2(n,m)$ is tight whenever there is an empirical distribution $\hat{\mathbb{P}}_n \in \mathcal{P}_\mathrm{E} (\mathbb{R}^d, n)$ whose scenarios $\bm{\xi}_1, \ldots, \bm{\xi}_n$ correspond to a Gram matrix $\mathbf{S} = [\bm\xi_1, \hdots, \bm\xi_n]^\top [\bm\xi_1, \hdots, \bm\xi_n] = \frac{n}{n-1} \mathbb{I} - \frac{1}{n-1} \mathbf{e}\mathbf{e}^\top$, which implies $\Vert \bm\xi_i \Vert_2 = \sqrt{s_{ii}} = 1$ for all $i \in I$. We now show that such an empirical distribution exists when $d \geq n - 1$.


\begin{proposition}
\label{prop:wcwd-2}
For $d \geq n - 1$, there is $\hat{\mathbb{P}}_n \in \mathcal{P}_\mathrm{E} (\mathbb{R}^d, n)$ with $\left \lVert \bm{\xi} \right \rVert_2 \leq 1$ for all $\bm{\xi} \in \text{\emph{supp}}(\hat{\mathbb{P}}_n)$ such that $C_2 (\hat{\mathbb{P}}_n, m) = \sqrt{\frac{n-m}{n-1}}$.
\end{proposition}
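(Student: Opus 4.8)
The plan is to exhibit the worst-case distribution explicitly as a regular simplex and to verify directly that it attains the bound. The discussion preceding the proposition already pinpoints the target: tightness of $\sqrt{\frac{n-m}{n-1}}$ requires an empirical distribution whose Gram matrix equals $\mathbf{S}^\star = \frac{n}{n-1}\mathbb{I} - \frac{1}{n-1}\mathbf{e}\mathbf{e}^\top$, that is, $\mathbf{S}^\star = \alpha^\star \mathbb{I} + \beta^\star \mathbf{e}\mathbf{e}^\top$ with $\alpha^\star = \frac{n}{n-1}$ and $\beta^\star = -\frac{1}{n-1}$. The core of the argument is therefore to show that such a Gram matrix is realizable by $n$ vectors in $\mathbb{R}^d$ precisely when $d \geq n-1$.

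First I would establish that $\mathbf{S}^\star$ is a valid Gram matrix of rank exactly $n-1$. By Lemma~\ref{lem:eigen}, the eigenvalues of $\alpha^\star\mathbb{I} + \beta^\star\mathbf{e}\mathbf{e}^\top$ are $\alpha^\star + n\beta^\star = 0$ (along $\mathbf{e}$) and $\alpha^\star = \frac{n}{n-1} > 0$ (on the orthogonal complement of $\mathbf{e}$, with multiplicity $n-1$). Hence $\mathbf{S}^\star \succeq \bm{0}$ and $\text{rank}(\mathbf{S}^\star) = n-1$. A symmetric positive semidefinite matrix of rank $r$ admits a factorization $\mathbf{S}^\star = [\bm\xi_1, \ldots, \bm\xi_n]^\top[\bm\xi_1, \ldots, \bm\xi_n]$ with $\bm\xi_i \in \mathbb{R}^d$ if and only if $d \geq r$; since $d \geq n-1 = \text{rank}(\mathbf{S}^\star)$ by hypothesis, the spectral decomposition of $\mathbf{S}^\star$ (padded with zero coordinates) yields such scenarios $\bm\xi_1, \ldots, \bm\xi_n$. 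Reading off the entries of $\mathbf{S}^\star$ then gives $\Vert\bm\xi_i\Vert_2^2 = s_{ii}^\star = \alpha^\star + \beta^\star = 1$ for every $i$, so the scenarios lie on the unit sphere, while $s_{ij}^\star = \beta^\star = -\frac{1}{n-1} \neq 1 = s_{ii}^\star$ for $i \neq j$ forces the points to be pairwise distinct. Consequently $\hat{\mathbb{P}}_n = \frac{1}{n}\sum_{i=1}^n\delta_{\bm\xi_i} \in \mathcal{P}_\mathrm{E}(\mathbb{R}^d, n)$ is admissible for problem~\eqref{opt:wcwd}.

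It then remains to evaluate $C_2(\hat{\mathbb{P}}_n, m)$ through the partition formula of Theorem~\ref{thm:partition} together with Remark~\ref{rem:center}. Here I would reuse the algebraic identity established in the proof of Theorem~\ref{thm:wcwd-2}: for any cell $I_j$ of an $m$-set partition, substituting $s_{ii} = \alpha^\star + \beta^\star$ and $s_{ik} = \beta^\star$ (for $i\neq k$) into the within-cluster sum of squares collapses it to $(\vert I_j\vert - 1)\alpha^\star$, independently of $\beta^\star$. Summing over $j \in J$ and using $\sum_{j}(\vert I_j\vert - 1) = n - m$, the objective equals $(n-m)\alpha^\star = \frac{n(n-m)}{n-1}$ for \emph{every} partition $\{I_j\} \in \mathfrak{P}(I,m)$. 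The useful consequence is that the inner minimization over partitions is vacuous---all $m$-set partitions of the equidistant simplex yield identical within-cluster variance---so that $C_2(\hat{\mathbb{P}}_n, m) = \big[\frac{1}{n}\cdot\frac{n(n-m)}{n-1}\big]^{1/2} = \sqrt{\frac{n-m}{n-1}}$, as claimed.

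The single substantive step is the rank computation: the bound becomes attainable exactly at the threshold $d = n-1$ because $\mathbf{S}^\star$ has a one-dimensional kernel spanned by $\mathbf{e}$ (equivalently, the simplex vertices are centered, $\sum_i\bm\xi_i = \bm{0}$, and hence span only an $(n-1)$-dimensional subspace). Everything else is direct verification, and the partition-independence of the objective makes the evaluation of $C_2(\hat{\mathbb{P}}_n,m)$ immediate rather than requiring a genuine combinatorial minimization.
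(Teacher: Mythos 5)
Your proof is correct, but it takes a genuinely different route from the paper for the existence part. The paper constructs the scenarios explicitly: for $d = n$ it takes $\bm\xi_i = y\mathbf{e} + (x-y)\mathbf{e}_i$ with $x = \sqrt{(n-1)/n}$ and $y = -1/\sqrt{n(n-1)}$ as in~\eqref{eq:wcwd_in_Rn}, verifies by direct calculation that the Gram matrix equals $\frac{n}{n-1}\mathbb{I} - \frac{1}{n-1}\mathbf{e}\mathbf{e}^\top$, and then handles $d = n-1$ by a rotation (the points lie in the hyperplane orthogonal to $\mathbf{e}$) and $d > n$ by zero-padding. You instead realize the target Gram matrix abstractly: Lemma~\ref{lem:eigen} gives its eigenvalues ($0$ with multiplicity $1$, and $\frac{n}{n-1}$ with multiplicity $n-1$), hence $\mathbf{S}^\star \succeq \bm{0}$ with $\mathrm{rank}(\mathbf{S}^\star) = n-1$, and a positive semidefinite matrix of rank $r$ admits a Gram factorization by vectors in $\mathbb{R}^d$ if and only if $d \geq r$. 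This treats all $d \geq n-1$ uniformly, avoids the case distinction, and makes transparent why $n-1$ is exactly the dimensional threshold; your distinctness check ($s^\star_{ij} = \beta^\star \neq \alpha^\star + \beta^\star = s^\star_{ii}$) and norm check ($s^\star_{ii} = 1$) are also sound. What the paper's explicit coordinates buy is reusability: the same scenarios~\eqref{eq:wcwd_in_Rn} are recycled verbatim in Proposition~\ref{prop:wcwd-1} (the type-$1$ lower bound) and in the intuition behind Proposition~\ref{prop:wcwd-normal}, which an abstract factorization does not provide. Finally, your evaluation step---using the identity from the proof of Theorem~\ref{thm:wcwd-2} that every $m$-set partition yields within-cluster cost $(n-m)\alpha^\star$, so that $C_2(\hat{\mathbb{P}}_n, m) = \sqrt{(n-m)/(n-1)}$ without any combinatorial minimization---is the same tightness argument the paper invokes implicitly through the discussion preceding the proposition; you spell it out, which makes your version more self-contained.
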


\begin{proof} 
Assume first that $d = n$ and consider the empirical distribution $\hat{\mathbb{P}}_n = \frac{1}{n} \sum_{i=1}^n \delta_{\bm{\xi}_i}$ defined through
    \begin{equation}
    \label{eq:wcwd_in_Rn}
        \bm\xi_i = y\mathbf{e} + (x-y)\mathbf{e}_i \in \mathbb{R}^n \quad \text{with} \quad
        x = \sqrt{\frac{n-1}{n}} \text{ and }
        y = \frac{-1}{\sqrt{n(n-1)}}.
    \end{equation}
A direct calculation reveals that $\mathbf{S} = [\bm\xi_1, \hdots, \bm\xi_n]^\top [\bm\xi_1, \hdots, \bm\xi_n] = \frac{n}{n-1} \mathbb{I} - \frac{1}{n-1} \mathbf{e}\mathbf{e}^\top$. 

To prove the statement for $d = n - 1$, we note that the $n$ scenarios in~\eqref{eq:wcwd_in_Rn} lie on the $(n-1)$-dimensional subspace $\mathcal{H}$ orthogonal to $\mathbf e\in \mathbb{R}^n$. Thus, there exists a rotation that maps $\mathcal{H}$ to $\mathbb{R}^{n-1}\times\{0\}$. As the Gram matrix is invariant under rotations, the rotated scenarios give rise to an empirical distribution $\hat{\mathbb{P}}_n \in \mathcal{P}_\mathrm{E} (\mathbb{R}^{n-1}, n)$ satisfying the statement of the proposition. Likewise, for $d > n$ the linear transformation $\bm{\xi}_i \mapsto (\mathbb{I}, \bm{0})^\top \bm{\xi}_i$, $\mathbb{I} \in \mathbb{R}^{n \times n}$ and $\bm{0} \in \mathbb{R}^{n \times (d - n)}$, generates an empirical distribution $\hat{\mathbb{P}}_n \in \mathcal{P}_\mathrm{E} (\mathbb{R}^{d}, n)$ that satisfies the statement of the proposition.
\qed
\end{proof}

Proposition~\ref{prop:wcwd-2} requires that $d \geq n - 1$, which appears to be restrictive. We note, however, that this condition is only sufficient (and not necessary) to guarantee the tightness of the bound from Theorem~\ref{thm:wcwd-2}. Moreover, we will observe in Section~\ref{subsec:discuss} that the bound of Theorem~\ref{thm:wcwd-2} provides surprisingly accurate guidance for the Wasserstein distance between practice-relevant empirical distributions $\hat{\mathbb{P}}_n$ and their continuously reduced optimal distributions.



\subsection{Fundamental Limits for the Type-1 Wasserstein Distance}
\label{section:wcdist-1}

In analogy to the previous section, we now derive a revised upper bound on $\overline{C}_l(n,m)$ for the type-$1$ Wasserstein distance. 

\begin{theorem}
\label{thm:wcwd-1}
The worst-case type-$1$ Wasserstein distance satisfies $\overline{C}_1 (n,m) \leq \sqrt{\frac{n-m}{n-1}}$.
\end{theorem}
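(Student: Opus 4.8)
The plan is to reduce the type-$1$ bound to the type-$2$ bound that we have already established in Theorem~\ref{thm:wcwd-2}, by exploiting the relationship between the two Wasserstein distances via a power-mean (Jensen/H\"older-type) inequality. The key observation is that the type-$1$ and type-$2$ continuous scenario reduction problems share the same partition structure from Theorem~\ref{thm:partition}: both seek an $m$-set partition $\{I_j\}$ of $I$, differing only in whether the inner cost aggregates first powers (with geometric-median centers, per Remark~\ref{rem:center}) or squared norms (with mean centers). Concretely, I would start from the representation
\begin{equation*}
C_1(\hat{\mathbb{P}}_n, m) = \min_{\{ I_j \} \in \mathfrak{P}(I,m)} \; \frac{1}{n} \sum_{j \in J} \min_{\bm\zeta_j \in \mathbb{R}^d} \sum_{i \in I_j} \Vert \bm\xi_i - \bm\zeta_j \Vert_2,
\end{equation*}
and compare it term-by-term against the type-$2$ objective on the \emph{same} partition.

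The main step is to bound the type-$1$ inner cost by the type-$2$ inner cost. Fix a partition $\{I_j\}$ and consider a single cell $I_j$. For the geometric median, a convenient and conservative move is to replace the optimal type-$1$ center $\mathrm{gmed}(I_j)$ by the suboptimal choice $\mathrm{mean}(I_j)$; this only increases the type-$1$ inner cost, giving $\min_{\bm\zeta_j} \sum_{i \in I_j}\Vert\bm\xi_i-\bm\zeta_j\Vert_2 \le \sum_{i\in I_j}\Vert\bm\xi_i-\mathrm{mean}(I_j)\Vert_2$. Now apply the Cauchy--Schwarz (power-mean) inequality to the $|I_j|$ nonnegative numbers $\Vert\bm\xi_i-\mathrm{mean}(I_j)\Vert_2$:
\begin{equation*}
\sum_{i \in I_j} \Vert\bm\xi_i-\mathrm{mean}(I_j)\Vert_2 \;\le\; \sqrt{|I_j|}\;\Big(\sum_{i\in I_j}\Vert\bm\xi_i-\mathrm{mean}(I_j)\Vert_2^2\Big)^{1/2}.
\end{equation*}
Summing over $j$ and applying Cauchy--Schwarz a second time across cells (using $\sum_j|I_j|=n$) should collapse the bound to $n^{1/2}$ times the square root of the total type-$2$ cost on that partition, so that $C_1(\hat{\mathbb{P}}_n,m)\le C_2(\hat{\mathbb{P}}_n,m)$ after dividing by $n$ and matching normalizations. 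Taking the worst case over $\hat{\mathbb{P}}_n$ on the unit ball and invoking Theorem~\ref{thm:wcwd-2} then yields $\overline{C}_1(n,m)\le\overline{C}_2(n,m)\le\sqrt{\frac{n-m}{n-1}}$.

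I expect the bookkeeping of the two nested Cauchy--Schwarz applications to be the delicate part: one must be careful that the partition minimizing the type-$1$ objective need not be the one minimizing the type-$2$ objective, so the clean inequality $C_1\le C_2$ must be argued at the level of a \emph{fixed} partition and then passed through the outer minimization (the minimum of a pointwise-dominated family is dominated). A cleaner alternative worth checking is the general fact that for any fixed distribution pair $d_1(\hat{\mathbb{P}}_n,\mathbb{Q})\le d_2(\hat{\mathbb{P}}_n,\mathbb{Q})$ by Jensen's inequality applied to the transportation plan (since $t\mapsto t^{1/l}$ interacts with the mass weights $\pi_{ij}$ summing to $1$), which would immediately give $C_1(\hat{\mathbb{P}}_n,m)\le C_2(\hat{\mathbb{P}}_n,m)$ for every $\hat{\mathbb{P}}_n$ without reference to centers or partitions at all. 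This monotonicity of $d_l$ in $l$ for probability measures is the real engine, and I would present it as the primary route, relegating the partition-level argument to a backup.
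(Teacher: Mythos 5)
Your proposal is correct, and it actually contains two routes. Your ``backup'' partition-level argument is essentially the paper's own proof: the paper also starts from Theorem~\ref{thm:partition} and Remark~\ref{rem:center}, replaces $\text{gmed}(I_j)$ by the suboptimal center $\text{mean}(I_j)$ (inequality~\eqref{ineq:mean-med}), and then applies the arithmetic-mean quadratic-mean inequality to all $n$ terms at once (inequality~\eqref{ineq:am-qm}); your two nested Cauchy--Schwarz steps, the first within cells and the second across cells using $\sum_j |I_j| = n$, compose to exactly that single AM--QM step, and you correctly flag that the domination must be established per fixed partition and then passed through the outer minimization, which is also how the paper's chain of inequalities works. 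Your ``primary'' route, however, is genuinely different and cleaner: the monotonicity $d_1(\hat{\mathbb{P}}_n,\mathbb{Q}) \leq d_2(\hat{\mathbb{P}}_n,\mathbb{Q})$ follows from Jensen's inequality applied to the $d_2$-optimal transport plan (whose entries sum to one), since
\begin{equation*}
\sum_{i,j} \pi_{ij} \Vert \bm\xi_i - \bm\zeta_j \Vert \;\leq\; \Big( \sum_{i,j} \pi_{ij} \Vert \bm\xi_i - \bm\zeta_j \Vert^2 \Big)^{1/2},
\end{equation*}
so $C_1(\hat{\mathbb{P}}_n,m) \leq C_2(\hat{\mathbb{P}}_n,m)$ holds for every distribution, and hence $\overline{C}_1(n,m) \leq \overline{C}_2(n,m) \leq \sqrt{\tfrac{n-m}{n-1}}$ by Theorem~\ref{thm:wcwd-2}. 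What this buys you is brevity and generality: it needs neither Theorem~\ref{thm:partition} nor Remark~\ref{rem:center} (in particular, no facts about geometric medians), it works for any norm and extends verbatim to show $d_l \leq d_{l'}$ for $l \leq l'$, and it yields the pointwise comparison $C_1 \leq C_2$ rather than only the worst-case one. What the paper's route buys is that it stays entirely within the Voronoi-partition framework used throughout Section~\ref{section:wcdist}, making explicit at which two points slack enters (median versus mean, and AM versus QM), which is the kind of information one would need to investigate tightness of the type-$1$ bound.
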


Note that this bound is identical to the bound of Theorem~\ref{thm:wcwd-2} for $l = 2$.

\begin{proof} \emph{of Theorem~\ref{thm:wcwd-1}} $\,$
Leveraging again Theorem~\ref{thm:partition} and Remark~\ref{rem:center}, we obtain that
	\begin{equation*}
		\raisebox{4mm}{$\overline{C}_1 (n,m) \; = \;$}
		\begin{array}{c@{\quad}l}
		\displaystyle \max_{ \{\bm\xi_i\} \, \subseteq \, \mathbb{R}^d} & \displaystyle \min_{\{ I_j \} \in \mathfrak{P}(I,m)} \frac{1}{n} \sum_{j \in J} \sum_{i \in I_j} \left\Vert \bm\xi_i - \text{gmed}(I_j) \right\Vert_2 \\[5mm]
		\displaystyle \text{s.t.} & \displaystyle \Vert \bm\xi_i \Vert_2 \leq 1 \quad \forall i \in I.
	\end{array}
	\end{equation*}
We show that $\overline{C}_1 (n,m) \leq \overline{C}_2 (n,m)$ for all $n$ and $m = 1, \ldots, n$, which in turn proves the statement of the theorem by virtue of Theorem~\ref{thm:wcwd-2}. To this end, we observe that
\begin{align}
		\raisebox{4mm}{$\overline{C}_1 (n,m)$} \;\; & \raisebox{4mm}{$\leq$} \;\;
		\begin{array}{c@{\quad}l}
		\displaystyle \max_{\{\bm\xi_i\} \, \subseteq \, \mathbb{R}^d} & \displaystyle \min_{\{ I_j \} \in \mathfrak{P}(I,m)} \frac{1}{n} \sum_{j \in J} \sum_{i \in I_j} \left\Vert \bm\xi_i - \text{mean}(I_j) \right\Vert_2 \\[5mm]
		\displaystyle \text{s.t.} & \displaystyle \Vert \bm\xi_i \Vert_2 \leq 1 \quad \forall i \in I
	\end{array} \label{ineq:mean-med} \\
	& \raisebox{2.75mm}{$\leq$} \;\;
		\begin{array}{c@{\quad}l}
		\displaystyle \max_{\{\bm\xi_i\} \, \subseteq \, \mathbb{R}^d} & \displaystyle \min_{\{ I_j \} \in \mathfrak{P}(I,m)} \left[ \frac{1}{n} \sum_{j \in J} \sum_{i \in I_j} \left\Vert \bm\xi_i - \text{mean}(I_j) \right\Vert_2^2 \right]^{1/2} \\[6mm]
		\displaystyle \text{s.t.} & \displaystyle \Vert \bm\xi_i \Vert_2 \leq 1 \quad \forall i \in I,
	\end{array}
	\label{ineq:am-qm}
\end{align}
where the first inequality follows from the definition of the geometric median, which ensures that
	\begin{equation*}
		\sum_{i \in I_j} \left\Vert \bm\xi_i - \text{gmed}(I_j) \right\Vert_2 \leq \sum_{i \in I_j} \left\Vert \bm\xi_i - \text{mean}(I_j) \right\Vert_2 \quad \forall j \in J,
	\end{equation*}
and the second inequality is due to the arithmetic-mean quadratic-mean inequality \cite[Exercise 2.14]{Steele2004}. The statement of the theorem now follows from the observation that the optimal value of~\eqref{ineq:am-qm} is identical to $\overline{C}_2(n,m)$.
\qed
\end{proof}

In the next proposition we derive a lower bound on $\overline{C}_1(n,m)$.

\begin{proposition}
\label{prop:wcwd-1}
For $d \geq n - 1$, the worst-case type-1 Wasserstein distance satisfies $\overline{C}_1(n,m) \geq \sqrt{\frac{(n-m)(n-m+1)}{n(n-1)}}$.
\end{proposition}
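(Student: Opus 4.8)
The plan is to establish the lower bound by evaluating the continuous scenario reduction cost for one cleverly chosen distribution. Since $\overline{C}_1(n,m)$ is a maximum over all admissible empirical distributions, it suffices to exhibit a single $\hat{\mathbb{P}}_n$ and compute (or lower-bound) $C_1(\hat{\mathbb{P}}_n,m)$. The natural candidate is the regular-simplex distribution from Proposition~\ref{prop:wcwd-2}, whose Gram matrix is $\mathbf{S} = \frac{n}{n-1}\mathbb{I} - \frac{1}{n-1}\mathbf{e}\mathbf{e}^\top$, so that $s_{ii}=1$ for all $i$ and $s_{ik}=-\frac{1}{n-1}$ for $i\neq k$; in particular all scenarios lie on the unit sphere and all pairwise distances coincide. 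By Theorem~\ref{thm:partition} and Remark~\ref{rem:center}, I would write
\begin{equation*}
	\overline{C}_1(n,m) \;\geq\; C_1(\hat{\mathbb{P}}_n,m) \;=\; \min_{\{I_j\}\in\mathfrak{P}(I,m)} \frac{1}{n}\sum_{j\in J}\sum_{i\in I_j}\big\Vert \bm\xi_i - \text{gmed}(I_j)\big\Vert_2,
\end{equation*}
so the task reduces to minimizing the right-hand side over all $m$-set partitions.

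The first key step is to evaluate each cluster's contribution exactly. Because the vertices $\{\bm\xi_i : i\in I_j\}$ indexed by a cluster of cardinality $k=\vert I_j\vert$ form a regular sub-simplex that is fully symmetric under the permutation group acting on $I_j$, the (strictly convex, for $k\geq 2$) objective $\bm\zeta\mapsto\sum_{i\in I_j}\Vert\bm\xi_i-\bm\zeta\Vert_2$ admits a unique minimizer that must be invariant under these symmetries and must lie in the affine hull of the cluster; the only such point is the centroid. Hence $\text{gmed}(I_j)=\text{mean}(I_j)$, and this identity is essential rather than cosmetic, since replacing the geometric median by the mean would otherwise only yield an upper bound on $C_1$. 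Using the Gram-matrix entries above, a direct calculation gives $\Vert\bm\xi_i-\text{mean}(I_j)\Vert_2^2 = \frac{k-1}{k}\cdot\frac{n}{n-1}$ for every $i\in I_j$, and therefore $\sum_{i\in I_j}\Vert\bm\xi_i-\text{mean}(I_j)\Vert_2 = \sqrt{\frac{n}{n-1}}\,\sqrt{k(k-1)}$.

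It remains to minimize $\sum_{j}\sqrt{k_j(k_j-1)}$ over all compositions $k_1+\cdots+k_m=n$ into $m$ positive integers, where I write $g(k)=\sqrt{k(k-1)}$. I would prove the merging inequality $g(a+b-1)\leq g(a)+g(b)$ for all $a,b\geq 1$: squaring reduces it to $2(a-1)(b-1)\leq 2\sqrt{ab(a-1)(b-1)}$, i.e.\ to $(a-1)(b-1)\leq ab$, which always holds. Consequently, given any two clusters of sizes $a,b\geq 2$, replacing them by clusters of sizes $a+b-1$ and $1$ preserves the number of parts and the total cardinality while not increasing the sum; iterating this operation drives every optimal configuration to one cluster of size $n-m+1$ accompanied by $m-1$ singletons (which contribute $g(1)=0$). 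This yields $\min_j\sum_j\sqrt{k_j(k_j-1)}=\sqrt{(n-m)(n-m+1)}$, and substituting back gives
\begin{equation*}
	C_1(\hat{\mathbb{P}}_n,m) \;=\; \frac{1}{n}\sqrt{\frac{n}{n-1}}\,\sqrt{(n-m)(n-m+1)} \;=\; \sqrt{\frac{(n-m)(n-m+1)}{n(n-1)}},
\end{equation*}
which is the claimed lower bound.

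I expect the main obstacle to be the discrete optimization over partitions: one must argue rigorously that the cost is minimized by concentrating mass into a single large cluster, which is precisely where the merging inequality $g(a+b-1)\leq g(a)+g(b)$ does the work. A secondary but genuine subtlety is the identification $\text{gmed}(I_j)=\text{mean}(I_j)$, since for $l=1$ the geometric median is not given by a closed-form mean in general and the symmetry/uniqueness argument for the sub-simplex is what makes the exact evaluation — and hence the lower bound rather than merely an upper bound — possible.
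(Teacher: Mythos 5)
Your proof is correct and follows essentially the same route as the paper's: the same regular-simplex distribution from Proposition~\ref{prop:wcwd-2}, the same identification $\text{gmed}(I_j)=\text{mean}(I_j)$ by permutation symmetry, the same per-cluster value $\sqrt{\tfrac{n}{n-1}}\sqrt{k(k-1)}$, and a combinatorial minimization whose core inequality $(a-1)(b-1)\leq ab$ is exactly the one the paper uses (the paper squares the whole sum $\sum_j \sqrt{z_j(z_j+1)}$ and bounds the cross terms via $\sqrt{z_j z_{j'}(1+z_j)(1+z_{j'})}\geq z_j z_{j'}$, whereas you package the same fact as an iterative merging argument; both are fine).

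One small imprecision worth fixing: your claim that $\bm\zeta\mapsto\sum_{i\in I_j}\Vert\bm\xi_i-\bm\zeta\Vert_2$ is \emph{strictly} convex with a \emph{unique} minimizer fails for clusters of size $k=2$, where every point of the segment $[\bm\xi_{i},\bm\xi_{i'}]$ is a geometric median. The conclusion you need survives (the midpoint is still \emph{a} minimizer, with value $\Vert\bm\xi_i-\bm\xi_{i'}\Vert_2=\sqrt{2n/(n-1)}$, matching your formula), and the general patch is either to average an arbitrary minimizer over the symmetry group (the minimizer set is convex and group-invariant) or, as the paper does, to verify the subgradient condition $\bm{0}\in\partial f_j(\text{mean}(I_j))$ directly, which covers all cluster sizes at once.
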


\begin{proof}
Assume first that $d = n$, and consider the empirical distribution $\hat{\mathbb{P}}_n$ with scenarios defined as in~\eqref{eq:wcwd_in_Rn}. Let $\{ I_j \} \in \mathfrak{P}(I,m)$ be an arbitrary $m$-set partition of~$I$ and note that $\text{gmed}(I_j)=\text{mean}(I_j)$ for every $j\in J$ due to the permutation symmetry of the $\bm{\xi}_i$. This is indeed the case because $\bm{0} \in \partial f_j (\text{mean}(I_j))$ for each $f_j (\bm{\zeta}) = \sum_{i \in I_j} \left\Vert y\mathbf{e} + (x-y)\mathbf{e}_i - \bm{\zeta} \right\Vert_2$, $j \in J$. Thus, we have 
    \begin{equation*}
\begin{aligned}
&\left\Vert \bm\xi_i - \text{mean}(I_j) \right\Vert_2 \\
& \quad =\ \left[ \left( x - \frac{x + (\vert I_j \vert-1)y}{\vert I_j \vert}\right)^2 + 
         (\vert I_j \vert-1) \left( y - \frac{x + (\vert I_j \vert-1)y}{\vert I_j \vert}\right)^2 \right]^{1/2} \\[2mm]
& \quad =\ \left[ \, \frac{\vert I_j \vert - 1}{\vert I_j \vert} \, \right]^{1/2} \, (x - y) \
        =\ \left[ \, \frac{n (\vert I_j \vert - 1)}{(n-1)\vert I_j \vert} \, \right]^{1/2}  \quad
\forall i \in I_j,
\end{aligned}
\end{equation*}
where the last equality follows from the definitions of $x$ and $y$ in~\eqref{eq:wcwd_in_Rn}. By Theorem~\ref{thm:partition} and Remark~\ref{rem:center} we therefore obtain
    \begin{equation*}
    \begin{aligned}
    C_1(\hat{\mathbb{P}}_n, m)\; &= \;
        \min_{\{ I_j \} \in \mathfrak{P}(I,m)} \frac{1}{n} \sum_{j \in J} \sum_{i \in I_j} \left\Vert \bm\xi_i - \text{mean}(I_j) \right\Vert_2 \\
        &= \;
        \min_{\{ I_j \} \in \mathfrak{P}(I,m)} \frac{1}{\sqrt{n(n-1)}} \sum_{j \in J} \sqrt{\vert I_j \vert(\vert I_j \vert - 1)}.
    \end{aligned}
\end{equation*} 
    By introducing auxiliary variables $z_j=\vert I_j \vert  - 1\in\mathbb{N}_0$, $j\in J$, we find that determining $C_1(\hat{\mathbb{P}}_n, m)$ is tantamount to solving
    \begin{equation*}
    \begin{aligned}
    C_1(\hat{\mathbb{P}}_n, m)\; &= \; 
        \frac{1}{\sqrt{n(n-1)}} \; \min_{\{z_j\} \subseteq \mathbb{N}_0} \left\{ \sum_{j \in J} \sqrt{z_j (z_j + 1)} :~ \sum_{j \in J} z_j = n - m\right\}.
\end{aligned}
\end{equation*}
    Observe that the objective function of $z_1 = n-m$ and $z_2 = \hdots = z_m = 0$ evaluates to $\sqrt{(n-m)(n-m+1)}$, which implies that $C_1(\hat{\mathbb{P}}_n, m) \leq \sqrt{\frac{(n-m)(n-m+1)}{n(n-1)}}$. Hence, it remains to establish the reverse inequality. To this end, we note that
    \begin{equation*}
    \begin{aligned}
    \sum_{j \in J} \sqrt{z_j (z_j + 1)} \; &=\;
        \Bigg[ \sum_{j \in J} z_j (z_j + 1) + \sum_{\substack{ j, j^\prime \in J \\ j \neq j^\prime }} \sqrt{z_j z_{j^\prime} (1+z_j)(1+z_{j^\prime})}\Bigg]^{1/2} \\
        &\geq \; \Bigg[ \sum_{j \in J} z_j (z_j + 1) + \sum_{\substack{ j, j^\prime \in J \\ j \neq j^\prime }} z_j z_{j^\prime} \Bigg]^{1/2} \\
        &= \; \left[ \Bigg( \sum_{j \in J} z_j \Bigg)^2 + \sum_{j \in J} z_j \right]^{1/2} \; = \; \sqrt{(n-m)(n-m+1)},
    \end{aligned}
    \end{equation*}
    and thus the claim follows for $d=n$. The cases $d=n-1$ and $d>n$ can be reduced to the case $d=n$ as in Proposition~\ref{prop:wcwd-2}. Details are omitted for brevity.
    \qed
\end{proof}

Proposition~\ref{prop:wcwd-1} asserts that $\overline{C}_1(n,m) \gtrsim \frac{n-m}{n-1}=\overline{C}{}^2_2(n,m)$ whenever $d\geq n-1$. Together with Theorem~\ref{thm:wcwd-1}, we thus obtain the following relation between the worst-case Wasserstein distances of types $l = 1$ and $l = 2$: 
\[ 
    \overline{C}^2_2(n,m) \leq \overline{C}_1(n,m) \leq \overline{C}_2(n,m).
\] 
We conjecture that the lower bound is tighter, but we were not able to prove this.

\subsection{Discussion}
\label{subsec:discuss}

Theorems~\ref{thm:wcwd-2} and~\ref{thm:wcwd-1} imply that $\overline{C}_l(n, m) \lesssim \sqrt{1 - p}$ for large~$n$ and for $l\in\{1,2\}$, where $p=\frac{m}{n}$ represents the desired reduction factor. 
The significance of this result is that it offers {\em a priori} guidelines for selecting the number $m$ of support points in the reduced distribution. To see this, consider any empirical distribution $\hat{\mathbb{P}}_n = \frac{1}{n} \sum_{i\in I} \delta_{\bm{\xi}_i}$, and denote by $r\geq 0$ and $\bm{\mu}\in\mathbb{R}^d$ the radius and the center of any (ideally the smallest) ball enclosing $\bm{\xi}_1,\ldots,\bm{\xi}_n$, respectively. In this case, we have
\begin{equation}
	\label{eq:approximation-quality}
	C_l(\hat{\mathbb{P}}_n,m)
	=r\cdot C_l\left(\frac{1}{n} \sum_{i\in I} \delta_{\frac{\bm{\xi}_i-\bm{\mu}}{r}},m\right)\leq r\cdot \overline{C}_l(n, m)\lesssim r\cdot \sqrt{1-p},
\end{equation}
where the inequality holds because $\|(\bm{\xi}_i-\bm{\mu})/r\|_2\leq 1$ for every $i\in I$. 
Note that \eqref{eq:approximation-quality} enables us to find an upper bound on the smallest $m$ guaranteeing that $C_l(\hat{\mathbb{P}}_n,m)$ falls below a prescribed threshold ({\em i.e.}, guaranteeing that the reduced $m$-point distribution remains within some prescribed distance from $\hat{\mathbb{P}}_n$).

Even though the inequality in~\eqref{eq:approximation-quality} can be tight, which has been established in Proposition~\ref{prop:wcwd-2}, one might suspect that typically $C_l(\hat{\mathbb{P}}_n,m)$ is significantly smaller than $r\cdot \sqrt{1-p}$ when the points $\bm{\xi}_i\in\mathbb{R}^d$, $i\in I$, are sampled randomly from a standard distribution, {\em e.g.}, a multivariate uniform or normal distribution. However, while the upper bound~\eqref{eq:approximation-quality} can be loose for low-dimensional data, Proposition~\ref{prop:wcwd-normal} below suggests that it is surprisingly tight in high dimensions---at least for $l=2$.


\begin{proposition}
\label{prop:wcwd-normal}
For any $\epsilon>0$ and $\delta>0$ there exist $c>0$ and $d\in\mathbb{N}$ such that
\begin{equation}
\mathbb{P}^n\left( \|\bm{\xi}_i\|_2\leq 1~\forall i\in I \text{ and }C_2\left( \frac{1}{n} \sum_{i\in I} \delta_{\bm{\xi}_i}, m\right) \geq \sqrt{1-p}-\delta\right)\geq 1-\epsilon,
\end{equation}
where $p=\frac{m}{n}$, and the support points $\bm{\xi}_i$, $i\in I$, are sampled independently from the normal distribution $\mathbb{P}$ with mean $\bm{0}\in\mathbb{R}^d$ and covariance matrix $(\sqrt{d-1}+c)^{-2}\mathbb{I}\in\mathbb{S}^d$.
\end{proposition}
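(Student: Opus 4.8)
The plan is to exploit the fact, already central to the tightness discussion following Theorem~\ref{thm:wcwd-2}, that the value $\sqrt{1-p}$ is attained precisely when the Gram matrix $\mathbf{S}=[\bm\xi_1,\ldots,\bm\xi_n]^\top[\bm\xi_1,\ldots,\bm\xi_n]$ equals the identity, and to observe that independent high-dimensional Gaussians with the prescribed scaling produce a Gram matrix that is entrywise close to $\mathbb{I}$ with high probability. Throughout I treat $n$ and $m<n$ (hence $1-p$) as fixed and choose $c,d$ in terms of $\epsilon,\delta$.

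First I would recast $C_2$ in terms of $\mathbf{S}$. By Theorem~\ref{thm:partition} and Remark~\ref{rem:center}, together with the identity $\sum_{i\in I_j}\|\bm\xi_i-\text{mean}(I_j)\|_2^2=\sum_{i\in I_j}s_{ii}-\tfrac1{|I_j|}\sum_{k,k'\in I_j}s_{kk'}$ already used in the proof of Theorem~\ref{thm:wcwd-2}, one has
\[
C_2\Big(\tfrac1n\textstyle\sum_{i\in I}\delta_{\bm\xi_i},m\Big)^2=\frac1n\min_{\{I_j\}\in\mathfrak{P}(I,m)}\sum_{j\in J}\Big(\sum_{i\in I_j}s_{ii}-\tfrac1{|I_j|}\sum_{k,k'\in I_j}s_{kk'}\Big).
\]
Writing $s_{ii}=1+\eta_i$ and $s_{ik}=\gamma_{ik}$ for $i\neq k$, and setting $\eta=\max_i|\eta_i|$, $\gamma=\max_{i\neq k}|\gamma_{ik}|$, a direct expansion shows that each bracketed cluster term equals $(|I_j|-1)$ plus an error, and that summing over $j$ yields the bracketed total $(n-m)$ plus an overall error bounded, uniformly over every partition, by $2n\eta+n\gamma$. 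The crucial feature is that this bound is partition-independent, so it controls the minimum and gives $C_2^2\geq (1-p)-2\eta-\gamma$. By continuity of $\sqrt{\cdot}$, once $2\eta+\gamma$ falls below a threshold depending only on $\delta$ and $1-p$, we obtain $C_2\geq\sqrt{1-p}-\delta$.

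It then remains to tune $\sigma=(\sqrt{d-1}+c)^{-1}$ so that, with probability at least $1-\epsilon$, both (a) $\|\bm\xi_i\|_2\leq 1$ for all $i$ and (b) $\eta,\gamma$ lie below that threshold. Here $s_{ii}=\sigma^2\chi^2_d$ concentrates around $\mathbb{E}[s_{ii}]=d/(\sqrt{d-1}+c)^2$, which is strictly below $1$ for $c>0$ and tends to $1$ as $d\to\infty$, with fluctuations of order $d^{-1/2}$; the off-diagonal $s_{ik}=\bm\xi_i^\top\bm\xi_k$ has mean $0$ and standard deviation of order $d^{-1/2}$. I would first choose $c$ large so that $\mathbb{E}[s_{ii}]$ sits roughly $c\sqrt{2}$ standard deviations below $1$ (a ratio that is asymptotically independent of $d$), whence a chi-square tail bound and a union bound over the $n$ points make $\mathbb{P}(\exists\, i:\|\bm\xi_i\|_2>1)\leq\epsilon/2$; then, holding $c$ fixed, I would send $d\to\infty$ so that chi-square and bilinear concentration (Laurent--Massart, Hanson--Wright) force $\eta$ and $\gamma$ (and in particular $s_{ii}\geq 1-\eta$) below the threshold with probability at least $1-\epsilon/2$. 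A final union bound combines (a) and (b).

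The main obstacle is the two-parameter calibration of $c$ and $d$: claim~(a) requires the typical squared norm to sit comfortably below $1$, which pushes $\mathbb{E}[s_{ii}]$ away from $1$, whereas the lower bound on $C_2$ requires $s_{ii}\approx 1$, i.e.\ $\eta$ small. The resolution is that the gap $1-\mathbb{E}[s_{ii}]$ and the standard deviation of $s_{ii}$ are both of order $d^{-1/2}$, so their ratio is governed by $c$ alone; choosing $c$ first (to buy enough standard deviations for the norm constraint) and only afterwards letting $d\to\infty$ (to drive the absolute gap and all fluctuations to zero) reconciles the two demands. The remaining work is routine application of standard Gaussian concentration inequalities and union bounds over the entries of $\mathbf{S}$.
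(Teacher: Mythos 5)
Your proposal is correct and follows essentially the same route as the paper's proof: recast $C_2^2$ as a function of the Gram matrix $\mathbf{S}$, show that $\mathbf{S}$ entrywise close to $\mathbb{I}$ forces $C_2\geq\sqrt{1-p}-\delta$ (the paper does this by abstract continuity of $f(\mathbf{S})$ at $f(\mathbb{I})=1-p$, you by an explicit partition-uniform error bound $2\eta+\gamma$, a harmless refinement), and then calibrate $c$ first (norm constraint, via chi-square/Gaussian-annulus tails and a union bound) and $d\to\infty$ second (diagonal and off-diagonal concentration), exactly as in the paper. The only cosmetic difference is the choice of named concentration inequalities: the paper invokes Lemma~2.8 of \cite{Hopcroft2012} together with rotation invariance and univariate Gaussian tails, where you cite Laurent--Massart and Hanson--Wright.
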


Proposition~\ref{prop:wcwd-normal} can be paraphrased as follows. Sampling the $\bm{\xi}_i$, $i\in I$, independently from the normal distribution $\mathbb{P}$ yields a (random) empirical distribution~$\hat{\mathbb{P}}_n$ that is feasible and $\delta$-suboptimal in~\eqref{opt:wcwd} with probability $1-\epsilon$. The intuition behind this result is that, in high dimensions, samples drawn from $\mathbb{P}$ are almost orthogonal and close to the surface of the unit ball with high probability. Indeed, these two properties are shared by the worst case distribution~\eqref{eq:wcwd_in_Rn} in high dimensions.

\begin{proof} \emph{of Proposition~\ref{prop:wcwd-normal}} $\,$
Theorem~\ref{thm:partition} and Remark~\ref{rem:center} imply that 
\begin{equation}
	\label{eq:aux-c2}
	C_2^2\left( \frac{1}{n} \sum_{i\in I} \delta_{\bm{\xi}_i}, m\right) = \min_{\{ I_j \} \in \mathfrak{P}(I,m)} \, \frac{1}{n} \sum_{j \in J} \sum_{i \in I_j} 
	\left\Vert \bm\xi_i - \text{mean}(I_j) \right\Vert_2^2 .
\end{equation}
From the proof of Theorem~\ref{thm:wcwd-2} we further know that \eqref{eq:aux-c2} can be expressed as a continuous function $f(\mathbf{S})$ of the Gram matrix $\mathbf{S} = [\bm\xi_1, \hdots, \bm\xi_n]^\top [\bm\xi_1, \hdots, \bm\xi_n]$, that is,
\begin{equation*}
	f(\mathbf{S})=\min_{\{ I_j \} \in \mathfrak{P}(I,m)} \,   \frac{1}{n} \sum_{j \in J}\frac{1}{\vert I_j \vert^2} \sum_{i \in I_j} \Bigg( \vert I_j \vert^2 s_{ii} - 2\vert I_j \vert \sum_{k \in I_j} s_{ik}
	+ \sum_{k \in I_j} s_{kk} + \sum_{\substack{k,k^\prime \in I_j \\ k \neq k^\prime}} s_{kk^\prime} \Bigg).
\end{equation*}
An elementary calculation shows that $f(\mathbb{I})=\frac{n-m}{n}=1-p$. Thus, by the continuity of $f(\cdot)$, there exists $\eta\in (0,1)$ with $\sqrt{f(\mathbf{S})}\geq \sqrt{1-p}-\delta$ whenever $\|\mathbf{S}-\mathbb{I}\|_{\rm max}\leq \eta$.

We are now ready to construct $\mathbb{P}$. First, select $c>0$ large enough to ensure~that
\[
\left( 1-\frac{4}{c^2} e^{-\frac{c^2}{4}}\right)^n\geq 1-\frac{\epsilon}{2}.
\]
Then, select $d\in\mathbb{N}$ large enough such that
\[
	\frac{\sqrt{d-1}-c}{\sqrt{d-1}+c} \geq 1-\eta\quad\text{and}\quad n(n-1) \mathrm{\Phi}(-\eta(\sqrt{d-1}+c))\leq \frac{\epsilon}{2},
\]
where $\mathrm{\Phi}(\cdot)$ denotes the univariate standard normal distribution function. Observe that the distribution $\mathbb{P}$ is completely determined by $c$ and $d$. Next, we find that
\begin{align}
	&\mathbb{P}^n\left( \|\bm{\xi}_i\|_2\leq 1~\forall i \text{ and }C_2\left( \frac{1}{n} \sum_{i\in I} \delta_{\bm{\xi}_i}, m\right)\geq \sqrt{1-p}-\delta\right) \nonumber \\
	\geq ~&\mathbb{P}^n\left(  \|\bm{\xi}_i\|_2\leq 1~\forall i \text{ and }\|\mathbf{S}-\mathbb{I}\|_{\rm max}\leq \eta \right) \nonumber \\
	= ~&\mathbb{P}^n\left( 1-\eta \leq \|\bm{\xi}_i\|_2\leq 1~\forall i \text{ and }  | \bm\xi_i^\top \bm\xi_j | \leq \eta ~\forall i\neq j \right) \nonumber \\
	\geq ~&\mathbb{P}^n\left( \frac{\sqrt{d-1}-c}{\sqrt{d-1}+c} \leq \|\bm{\xi}_i\|_2\leq 1~\forall i \text{ and } 
	| \bm\xi_i^\top \bm\xi_j | \leq \eta\cdot \| \bm{\xi}_j\|_2 ~\forall i\neq j \right) \nonumber \\
	\geq ~&\mathbb{P}^n\left( \frac{\sqrt{d-1}-c}{\sqrt{d-1}+c} \leq \|\bm{\xi}_i\|_2\leq 1~\forall i \right) + \mathbb{P}^n\left(| \bm\xi_i^\top \bm\xi_j | \leq \eta \cdot \| \bm{\xi}_j\|_2 ~\forall i\neq j \right)-1, \label{eq:probability}
\end{align}
where the first and second inequalities follow from the construction of $\eta$ and $c$, respectively, while the third inequality exploits the union bound. We also have
\begin{align}
	\nonumber
	\mathbb{P}^n\left( \frac{\sqrt{d-1}-c}{\sqrt{d-1}+c} \leq \|\bm{\xi}_i\|_2\leq 1~\forall i\right)&=\mathbb{P}\left( \frac{\sqrt{d-1}-c}{\sqrt{d-1}+c} \leq \|\bm{\xi}_1\|_2\leq 1\right)^n\\
	 & \geq \left( 1-\frac{4}{c^2} e^{-\frac{c^2}{4}}\right)^n\geq 1-\frac{\epsilon}{2},
	 \label{eq:cmu-result}
\end{align}
where the equality holds due to the independence of the $\bm{\xi}_i$, while the first and second inequalities follow from Lemma~2.8 in~\cite{Hopcroft2012} and the construction of~$c$, respectively. By the choice of $d$, we finally obtain
\begin{align}
	\nonumber
	\mathbb{P}^n\left(| \bm\xi_i^\top \bm\xi_j | \leq \eta \cdot \| \bm{\xi}_j\|_2 ~\forall i\neq j \right)&
	\geq 1-\sum_{i\neq j}\mathbb{P}^n\left(| \bm\xi_i^\top \bm\xi_j | \geq \eta \cdot \| \bm{\xi}_j\|_2\right)\\
	 & = 1-n(n-1) \mathrm{\Phi}(-\eta(\sqrt{d-1}+c))\geq 1-\frac{\epsilon}{2}.
	 \label{eq:i-not-j}
\end{align}
The equality in~\eqref{eq:i-not-j} holds due to the rotation symmetry of~$\mathbb{P}$, which implies that
\begin{align}
	\nonumber
	\mathbb{P}^n\left(| \bm\xi_i^\top \bm\xi_j | \geq \eta \cdot \| \bm{\xi}_j\|_2 \right)&= \mathbb{P}\left(| \bm\xi_1^\top \mathbf{e}_1 | \geq \eta \right) =2 \mathrm{\Phi}(-\eta(\sqrt{d-1}+c)).
\end{align}
The claim then follows by substituting~\eqref{eq:cmu-result} and~\eqref{eq:i-not-j} into~\eqref{eq:probability}. \qed
\end{proof}

\begin{figure}[h]
 	\centering
	\includegraphics[width=0.28\paperwidth]{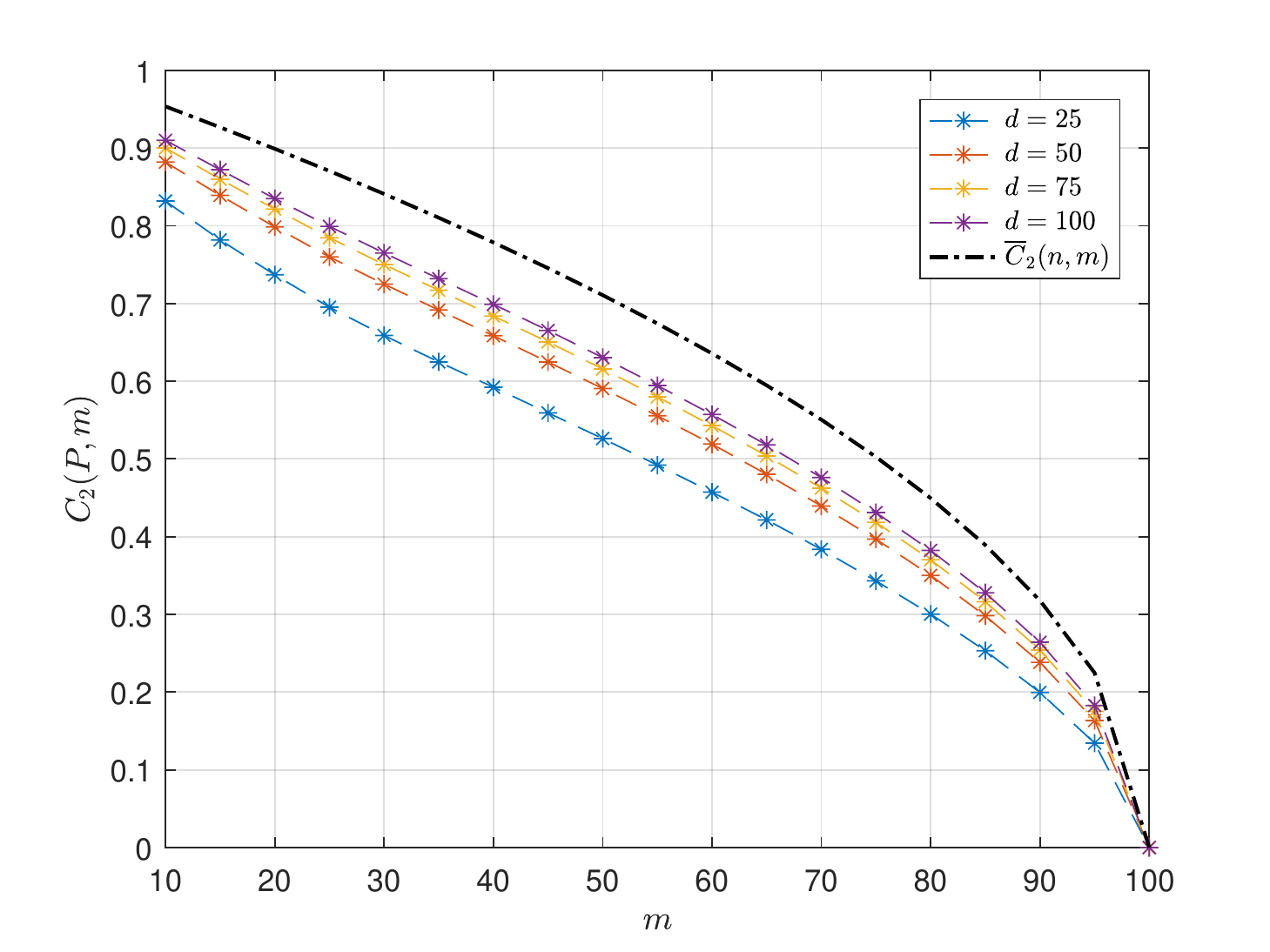}
	\includegraphics[width=0.28\paperwidth]{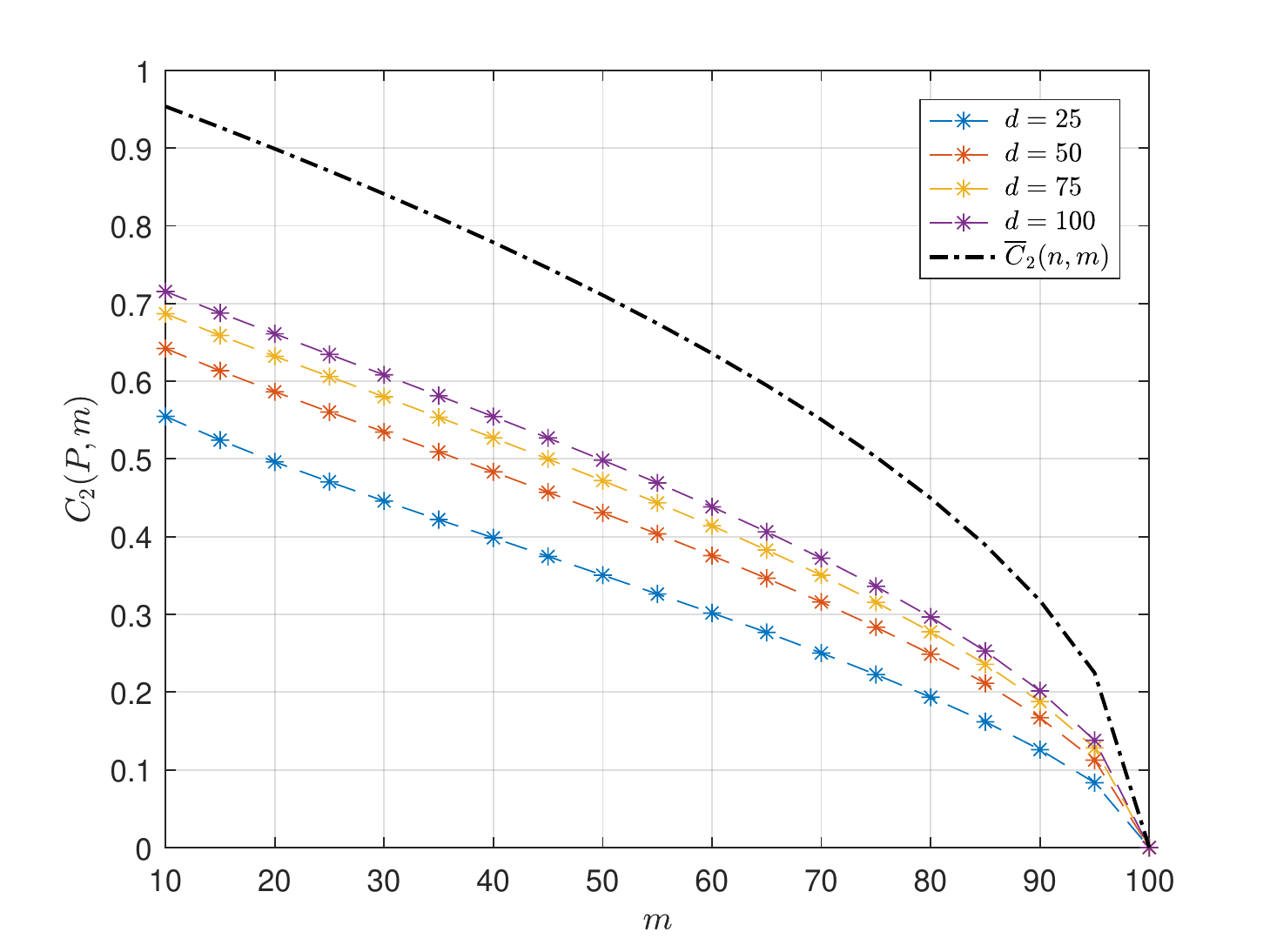}
	\caption{Comparison between $C_2(\hat{\mathbb{P}}_n,m)$ and $\overline{C}_2(n,m)$ under uniform (left panel) and normal (right panel) sampling.}
	\label{fig:wc_bound}
\end{figure}

Figure~\ref{fig:wc_bound} compares $\overline{C}_2(m,n)$ with $C_2(\hat{\mathbb{P}}_n, m)$ for $n=100$, $m \in\{ 10, \hdots, 100\}$ and $d\in\{ 25, 50, 75, 100\}$. The $n$ original support points are sampled randomly from the uniform distribution on the unit ball (left panel) and the normal distribution from Proposition~\ref{prop:wcwd-normal} with $c=2.97$, which ensures that $\Vert \bm\xi_i \Vert_2 \leq 1$ with 95\% probability (right panel). Note that $C_2(\hat{\mathbb{P}}_n, m)$ is random. Thus, all shown values are averaged across 100 independent trials. Figure~\ref{fig:wc_bound} confirms that $C_2(\hat{\mathbb{P}}_n,m)$ approaches the worst-case bound $\overline{C}_2(n,m)$ as the dimension $d$ increases.

\section{Guarantees for Discrete Scenario Reduction}
\label{section:compare}

For $n$-point empirical distributions $\hat{\mathbb{P}}_n = \frac{1}{n} \sum_{i=1}^n \delta_{\bm{\xi}_i}$ supported on $\mathbb{R}^d$, we now study the loss of optimality incurred by solving the discrete scenario reduction problem instead of its continuous counterpart. More precisely, we want to determine the point-wise largest lower bound $\underline{\kappa}_l (n, m)$ and the point-wise smallest upper bound $\overline{\kappa}_l (n, m)$ that satisfy
\begin{equation}
\label{eq:ratio}
	\underline{\kappa}_l(n,m) \cdot C_l(\hat{\mathbb{P}}_n,m) \; \leq \; 
    D_l(\hat{\mathbb{P}}_n,m) \; \leq \; 
    \overline{\kappa}_l(n,m) \cdot C_l(\hat{\mathbb{P}}_n,m) \quad
    \forall \, \hat{\mathbb{P}}_n \in \mathcal{P}_{\mathrm{E}}(\mathbb{R}^d, n)
\end{equation}
for the Wasserstein distances of type $l \in \{ 1, 2 \}$. Note that the existence of finite bounds $\underline{\kappa}_l(n,m)$ and $\overline{\kappa}_l(n,m)$ is not a priori obvious as they do not depend on the dimension $d$. Moreover, while it is clear that $\underline{\kappa}_l(n,m) \geq 1$ if it exists, it does not seem easy to derive a na\"ive upper bound on $\overline{\kappa}_l(n,m)$.

Our derivations in this section will use the following result, which is the analogue of Theorem~\ref{thm:partition} for the discrete scenario reduction problem.

\begin{theorem}
\label{thm:partition-red}
For any type-$l$ Wasserstein distance induced by any norm $\Vert \cdot \Vert$, the discrete scenario reduction problem can be reformulated as
\begin{equation*}
	D_l(\hat{\mathbb{P}}_n, m) = \min_{\{ I_j \} \in \mathfrak{P}(I,m)} \, \left[ \frac{1}{n} \sum_{j \in J} \min_{\bm\zeta_j \in \{ \bm\xi_i:\, i \in I_j \}} \sum_{i \in I_j} \Vert \bm\xi_i - \bm\zeta_j \Vert^l \right]^{1/l}.
\end{equation*}
\end{theorem}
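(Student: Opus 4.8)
The plan is to transcribe the proof of Theorem~\ref{thm:partition} almost verbatim, isolating the single place where the discreteness of the admissible centers enters. First I would re-run Theorem~2 of \citet{Dupacova2003}, but now constraining the finite candidate set~$\mathrm{\Xi}$ in that result to range over the $m$-element subsets of $\text{supp}(\hat{\mathbb{P}}_n) = \{\bm\xi_i : i\in I\}$ rather than over all of $\mathbb{R}^d$. This produces the discrete counterpart of~\eqref{eq:voronoi-2},
\[
	D_l(\hat{\mathbb{P}}_n, m) = \min_{\{\bm\zeta_j\} \subseteq \{\bm\xi_i:\, i\in I\}} \left[\frac{1}{n}\sum_{i\in I}\min_{j\in J}\Vert\bm\xi_i - \bm\zeta_j\Vert^l\right]^{1/l},
\]
and it then remains to pass from this center-based formulation to the claimed partition-based one via two inequalities.

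For the bound ``$\leq$'', I would fix an arbitrary partition $\{I_j\}\in\mathfrak{P}(I,m)$ together with centers $\bm\zeta_j\in\{\bm\xi_i:i\in I_j\}$. Being support points, these $m$ centers furnish a feasible selection for $D_l(\hat{\mathbb{P}}_n,m)$; since routing each $\bm\xi_i$ to its \emph{nearest} center costs no more than routing it to the center of the cell that contains~$i$, and since $t\mapsto t^{1/l}$ is nondecreasing, one obtains $D_l(\hat{\mathbb{P}}_n,m)\le[\frac{1}{n}\sum_{j\in J}\sum_{i\in I_j}\Vert\bm\xi_i-\bm\zeta_j\Vert^l]^{1/l}$. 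Minimizing the right-hand side first over the in-cell centers and then over all partitions delivers the upper bound.

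For the reverse bound ``$\geq$'', I would take an optimizer $\{\bm\zeta_1^\star,\ldots,\bm\zeta_m^\star\}$ of the center-based formulation and let $\{I_j^\star\}$ be the induced Voronoi partition, exactly as in Theorem~\ref{thm:partition}. The crux is to verify that each center lies in its own cell, $\bm\zeta_j^\star\in\{\bm\xi_i:i\in I_j^\star\}$: writing $\bm\zeta_j^\star=\bm\xi_k$, the point $\bm\xi_k$ is at distance zero from $\bm\zeta_j^\star$ and---because $\hat{\mathbb{P}}_n\in\mathcal{P}_\mathrm{E}(\mathbb{R}^d,n)$ has pairwise distinct atoms---at strictly positive distance from every other center, so $k\in I_j^\star$. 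Hence the cell-membership restriction in the inner minimization is inactive at the optimum, and $D_l^l(\hat{\mathbb{P}}_n,m)=\frac{1}{n}\sum_{j\in J}\sum_{i\in I_j^\star}\Vert\bm\xi_i-\bm\zeta_j^\star\Vert^l$ dominates the partition expression.

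The main obstacle is precisely this cell-membership observation, since it is what licenses restricting the inner minimization to $\{\bm\xi_i:i\in I_j\}$ instead of all of $\text{supp}(\hat{\mathbb{P}}_n)$; it hinges entirely on the distinctness of the atoms. A secondary point to handle is the requirement that $\mathfrak{P}(I,m)$ contain exactly $m$ nonempty cells: because adjoining one more distinct support point as a center can only decrease the Wasserstein distance and $m\le n$, I may assume the optimal discrete solution uses $m$ distinct centers, whereupon the distinctness argument guarantees each Voronoi cell contains at least its own center and is therefore nonempty. Beyond these points the argument is a direct transcription of the proof of Theorem~\ref{thm:partition}.
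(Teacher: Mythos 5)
Your proposal is correct and follows exactly the route the paper intends: the paper omits this proof, stating only that it is ``similar to the proof of Theorem~\ref{thm:partition},'' and your argument is precisely that transcription, with the center-based formulation $D_l(\hat{\mathbb{P}}_n,m)=\min_{\{\bm\zeta_j\}\subseteq\{\bm\xi_i:\,i\in I\}}[\frac{1}{n}\sum_{i\in I}\min_{j\in J}\Vert\bm\xi_i-\bm\zeta_j\Vert^l]^{1/l}$ playing the role of~\eqref{eq:voronoi-2}. You moreover correctly isolate and resolve the two details the paper leaves implicit---that each optimal center lies in its own Voronoi cell (by distinctness of the atoms), which justifies restricting the inner minimization to $\{\bm\xi_i:i\in I_j\}$, and that one may assume exactly $m$ distinct centers so the induced partition has $m$ nonempty cells.
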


\begin{proof} 
The proof is similar to the proof of Theorem~\ref{thm:partition} and is therefore omitted. \qed
\end{proof}

The remainder of this section derives lower and upper bounds on $\underline{\kappa}_l(n,m)$ and $\overline{\kappa}_l(n,m)$ for Wasserstein distances of type $l = 2$ (Section~\ref{section:compare-2}) and $l = 1$ (Section~\ref{section:compare-1}), respectively. To eliminate trivial cases, we assume throughout this section that $n \geq 2$, $m \in \{ 1, \ldots, n - 1 \}$ and $d \geq 2$.


\subsection{Guarantees for the Type-2 Wasserstein Distance}
\label{section:compare-2}


We first bound $\overline{\kappa}_2 (n,m)$ in equation~\eqref{eq:ratio} from above (Theorem~\ref{thm:red2-sqrt(2)}) and below (Proposition~\ref{prop:red2-sqrt(2)}).

\begin{theorem}
\label{thm:red2-sqrt(2)}
The upper bound $\overline{\kappa}_2 (n,m)$ in~\eqref{eq:ratio} satisfies $\overline{\kappa}_2 (n,m) \leq \sqrt{2}$ for all $n,m$.
\end{theorem}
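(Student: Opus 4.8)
The plan is to bound $D_2(\hat{\mathbb{P}}_n,m)$ by exhibiting a good feasible discrete solution whose cost is at most $\sqrt{2}$ times the continuous optimum. The natural candidate is to take the optimal continuous partition $\{I_j^\star\}$ that achieves $C_2(\hat{\mathbb{P}}_n,m)$ (via Theorem~\ref{thm:partition} and Remark~\ref{rem:center}) and, for each cluster $I_j^\star$, replace the continuous minimizer $\text{mean}(I_j^\star)$ by the best available support point $\bm\xi_i$ with $i \in I_j^\star$ (which is exactly the restriction imposed by Theorem~\ref{thm:partition-red}). Since $D_2$ is a minimum over all partitions and all admissible centers, this specific choice furnishes an upper bound on $D_2(\hat{\mathbb{P}}_n,m)$, reducing the whole theorem to a \emph{per-cluster} comparison.

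**The key per-cluster estimate** is the following: for any finite point set $\{\bm\xi_i : i \in I_j\}$ with mean $\bm\mu_j = \text{mean}(I_j)$, one has
\begin{equation*}
	\min_{k \in I_j} \sum_{i \in I_j} \Vert \bm\xi_i - \bm\xi_k \Vert_2^2 \;\leq\; 2 \sum_{i \in I_j} \Vert \bm\xi_i - \bm\mu_j \Vert_2^2.
\end{equation*}
**First I would** prove this by averaging rather than minimizing: using the bias–variance–type identity $\sum_{i}\Vert \bm\xi_i - \bm\xi_k\Vert_2^2 = \sum_i \Vert \bm\xi_i - \bm\mu_j\Vert_2^2 + \vert I_j\vert\,\Vert \bm\xi_k - \bm\mu_j\Vert_2^2$, I would average both sides over $k \in I_j$; the second term averages to $\sum_k \Vert\bm\xi_k-\bm\mu_j\Vert_2^2$, so the average of the left-hand side equals $2\sum_i\Vert\bm\xi_i-\bm\mu_j\Vert_2^2$. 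Since the minimum over $k$ is at most the average over $k$, the displayed inequality follows, and the factor $2$ appears exactly and structurally.

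**Then I would** assemble the clusters. Writing $S_j = \sum_{i\in I_j}\Vert\bm\xi_i-\bm\mu_j\Vert_2^2$, the per-cluster bound gives a discrete cost $\frac{1}{n}\sum_j \min_{k\in I_j}\sum_{i\in I_j}\Vert\bm\xi_i-\bm\xi_k\Vert_2^2 \leq \frac{2}{n}\sum_j S_j = 2\,C_2^2(\hat{\mathbb{P}}_n,m)$ for this particular partition. Because $D_2^2(\hat{\mathbb{P}}_n,m)$ is the minimum over all partitions and admissible centers, it is bounded by this feasible value, yielding $D_2^2 \leq 2\,C_2^2$, i.e.\ $D_2 \leq \sqrt{2}\,C_2$, which is the claim. **The main obstacle** I anticipate is purely bookkeeping rather than conceptual: one must be careful that the minimizing center in the discrete problem is constrained to lie in the \emph{same} cluster $I_j$ (as Theorem~\ref{thm:partition-red} requires), so the averaging argument must range only over $k \in I_j$ and not over all of $I$—but this is precisely the set over which the averaging identity is stated, so the restriction is harmless and the factor $\sqrt{2}$ is obtained without any dimension dependence, as promised.
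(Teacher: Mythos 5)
Your proposal is correct and follows essentially the same route as the paper's proof: the paper also reduces to a per-cluster ($m=1$) comparison via the optimal continuous partition and establishes the factor $\sqrt{2}$ by bounding the best single-atom center through a ``minimum $\leq$ average'' argument. The only cosmetic difference is that the paper normalizes each cluster (mean $\bm{0}$, unit average squared norm) and expands $\Vert \bm\xi_i - \bm\xi_j \Vert_2^2$ directly, whereas you invoke the bias--variance identity in invariant form; the underlying computation is identical.
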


\begin{proof} 
The proof proceeds in two steps. We first show that $\overline{\kappa}_2(n,m) \leq \sqrt{2}$ for all $n$ when $m = 1$ (Step~1). Then we extend the result to all $n$ and $m$ (Step~2).

\paragraph{Step 1:}
Fix any $\hat{\mathbb{P}}_n \in \mathcal{P}_\mathrm{E}(\mathbb{R}^d, n)$. W.l.o.g., we can assume that $\text{mean}(I) = \bm{0}$ and $\frac{1}{n} \sum_{i \in I} \Vert \bm\xi_i \Vert_2^2 = 1$ by re-positioning and scaling the atoms $\bm\xi_i$ appropriately. Note that the re-positioning does not affect $C_2(\hat{\mathbb{P}}_n,1)$ or $D_2(\hat{\mathbb{P}}_n,1)$, and the positive homogeneity of the Wasserstein distance implies that the scaling affects both $C_2(\hat{\mathbb{P}}_n,1)$ and $D_2(\hat{\mathbb{P}}_n,1)$ in the same way and thus preserves their ratio $\overline{\kappa}_2 (n,1)$. Theorem~\ref{thm:partition} and Remark~\ref{rem:center} then imply that
	\begin{equation*}
		C_2(\hat{\mathbb{P}}_n,1) \; = \;
		\left[ \frac{1}{n} \sum_{i \in I} \Vert \bm\xi_i - \text{mean}(I) \Vert_2^2 \right]^{1/2} \; = \; 1.
	\end{equation*}
Step 1 is thus complete if we can show that $D_2(\hat{\mathbb{P}}_n,1) \leq \sqrt{2}$. Indeed, we have
	\begin{equation}
	\label{eq:(n,1)-sqrt(2)}
	\begin{aligned}
		D^2_2(\hat{\mathbb{P}}_n,1) \
		&=\ \min_{j \in I}\ \frac{1}{n} \sum_{i \in I} \Vert \bm\xi_i - \bm\xi_j \Vert_2^2 \
		=\min_{j \in I}\ \frac{1}{n} \sum_{i \in I} (\bm\xi_i - \bm\xi_j)^\top(\bm\xi_i - \bm\xi_j) \\
		&=\ \min_{j \in I}\ \frac{1}{n} \sum_{i \in I} \left( \bm\xi_i^\top \bm\xi_i - 2 \bm\xi_i^\top \bm\xi_j + \bm\xi_j^\top \bm\xi_j \right) \
		=\ \min_{j \in I}\ \frac{1}{n} \sum_{i \in I} \left( \bm\xi_i^\top \bm\xi_i + \bm\xi_j^\top \bm\xi_j \right) \\
		&=\ \min_{j \in I}\ \Vert \bm\xi_j \Vert_2^2 + \frac{1}{n} \sum_{i \in I} \Vert \bm\xi_i \Vert_2^2 \
		=\ 1 + \min_{j \in I}\ \Vert \bm\xi_j \Vert_2^2 \
		\leq\ 2 ,
	\end{aligned}
	\end{equation}
	where the first equality is due to Theorem~\ref{thm:partition-red}, the fourth follows from $\sum_{i \in I} \bm\xi_i = n \cdot \text{mean}(I) = \bm{0}$, and the inequality holds since $\min_{j \in I} \Vert \bm\xi_j \Vert_2^2 \leq  \frac{1}{n} \sum_{i \in I} \Vert \bm\xi_i \Vert_2^2 = 1$.
	
    \paragraph{Step 2:} 
    Fix any $\hat{\mathbb{P}}_n \in \mathcal{P}_\mathrm{E}(\mathbb{R}^d, n)$. Theorem~\ref{thm:partition} and Remark~\ref{rem:center} imply that
	\begin{equation*}
	\begin{aligned}
		C_2(\hat{\mathbb{P}}_n,m) =
		\min_{\{ I_j\} \, \in \, \mathfrak{P}(I,m)} \left[ \frac{1}{n} \sum_{j \in J} \sum_{i \in I_j} \Vert \bm\xi_i - \text{mean}(I_j) \Vert_2^2 \right]^{1/2}.
	\end{aligned}
	\end{equation*}
	For an optimal partition $\{ I^\star_j \}$ to this problem, $C_2(\hat{\mathbb{P}}_n,m)$ can be expressed as
	\begin{equation*}
		C_2(\hat{\mathbb{P}}_n,m) = \left[ \sum_{j \in J} \frac{\vert I^\star_j \vert}{n} C^2_{2,j} \right]^{1/2}\!\!\!\!\! \quad \text{with}~~
		C_{2,j} = \left[ \frac{1}{\vert I^\star_j \vert} \sum_{i \in I^\star_j} \Vert \bm\xi_i - \text{mean}(I^\star_j) \Vert_2^2 \right]^{1/2}.
	\end{equation*}
	From our discussion in Step~1 we know that $C_{2,j}$ represents the type-$2$ Wasserstein distance between the conditional empirical distribution $\hat{\mathbb{P}}_n^{\,j} = \frac{1}{\vert I^\star_j \vert} \sum_{i \in I^\star_j} \delta_{\bm\xi_i}$ and its closest Dirac distribution, that is, $C_2(\hat{\mathbb{P}}_n^{\,j}, 1)$. Analogously, we obtain that
	\begin{equation*}
	\begin{aligned}
		D_2(\hat{\mathbb{P}}_n,m) \; &\leq \;
        \left[ \sum_{j \in J} \frac{\vert I^\star_j \vert}{n} D^2_{2,j} \right]^{1/2}\!\!\!\!\! \quad \text{with}~~
		D_{2,j} = \left[ \min_{j \in I^\star_j} \frac{1}{\vert I^\star_j \vert} \sum_{i \in I^\star_j} \Vert \bm\xi_i - \bm\xi_j \Vert_2^2 \right]^{1/2} \\
         &\leq \; \left[ \sum_{j \in J} \frac{\vert I^\star_j \vert}{n} (2C^2_{2,j}) \right]^{1/2} \!\!\! = \ \
        \sqrt{2} \, C_2(\hat{\mathbb{P}}_n,m),
	\end{aligned}
	\end{equation*}
	where the first inequality holds since the optimal partition $\{ I^\star_j \}$ for $C_2(\hat{\mathbb{P}}_n,m)$ is typically suboptimal in $D_2(\hat{\mathbb{P}}_n,m)$, the second inequality follows from the fact that $D_{2,j} = D_2(\hat{\mathbb{P}}_n^{\,j}, 1)$ and $D_2(\hat{\mathbb{P}}_n^{\,j}, 1) \leq \sqrt{2} C_2(\hat{\mathbb{P}}_n^{\,j}, 1)$ due to Step~1, and the identity follows from the definition of $C_{2,j}$. The statement now follows.
	\qed
\end{proof}

\begin{figure}[h]
 	\centering
	\includegraphics[width=0.28\paperwidth]{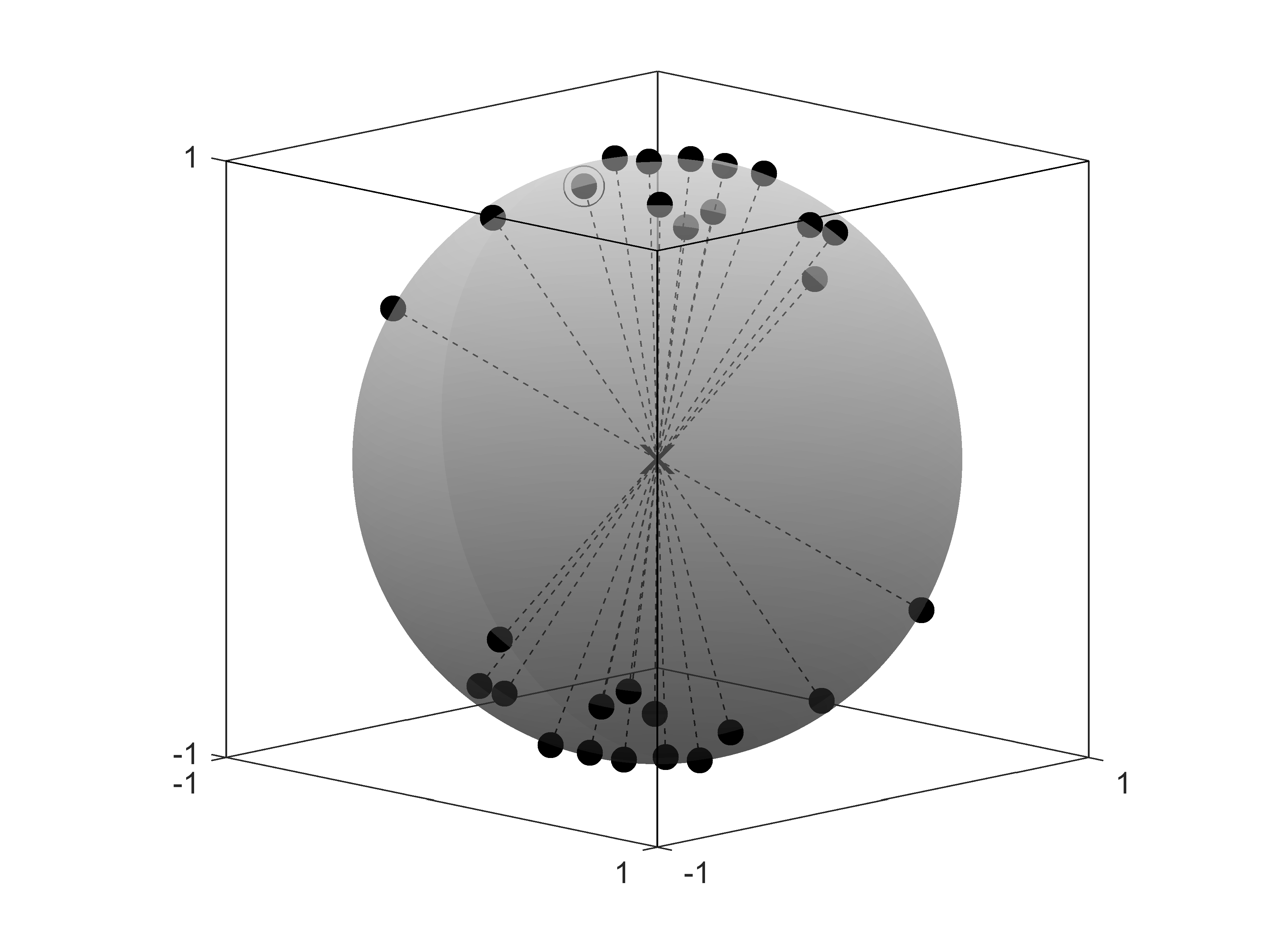}
	\includegraphics[width=0.28\paperwidth]{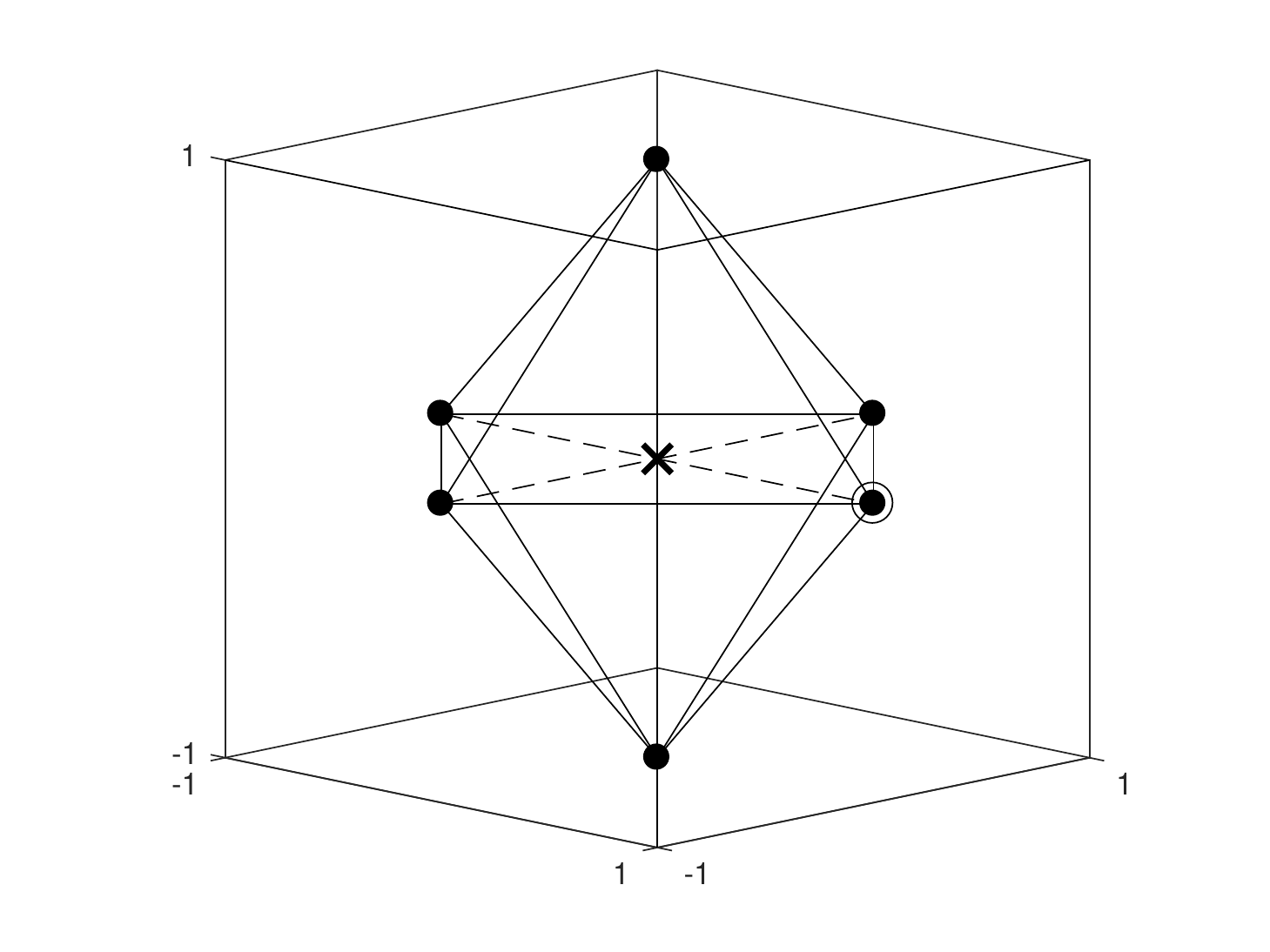}
	\caption{Empirical distributions in $\mathbb{R}^3$ that maximize the ratio between $D_l(\hat{\mathbb{P}}_n,1)$ and $C_l(\hat{\mathbb{P}}_n,1)$ for $l = 2$ and $\Vert \cdot \Vert = \Vert \cdot \Vert_2$ (left panel) as well as $l = 1$ and $\Vert \cdot \Vert = \Vert \cdot \Vert_1$ (right panel). In both cases, the continuous scenario reduction problem is optimized by the Dirac distribution at $\bm{0}$ (marked as $\times$), whereas the discrete scenario reduction problem is optimized by any of the atoms (such as $\circ$).}
	\label{fig:wsss_2_and_1}
\end{figure}

\begin{proposition}
\label{prop:red2-sqrt(2)}
There is $\hat{\mathbb{P}}_n \in \mathcal{P}_\mathrm{E} (\mathbb{R}^d, n)$ with $D_2(\hat{\mathbb{P}}_n,m) = \sqrt{2} C_2(\hat{\mathbb{P}}_n,m)$ for all~$n,m$.
\end{proposition}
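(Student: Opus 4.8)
The plan is to exhibit a single empirical distribution that saturates the bound $\overline{\kappa}_2(n,m) \le \sqrt{2}$ established in Theorem~\ref{thm:red2-sqrt(2)}. Since that theorem already gives $D_2 \le \sqrt 2\, C_2$ for every $\hat{\mathbb P}_n$, it suffices to display one $\hat{\mathbb P}_n$ for which equality holds. Inspecting the proof of Theorem~\ref{thm:red2-sqrt(2)} shows precisely what is needed: in Step~2 we have $C_2^2 = \sum_{j} \frac{|I_j^\star|}{n} C_{2,j}^2$ and $D_2^2 \le \sum_{j} \frac{|I_j^\star|}{n} D_{2,j}^2 \le 2 C_2^2$, so equality requires both that the $C_2$-optimal partition $\{I_j^\star\}$ be $D_2$-optimal as well, and that each conditional cluster satisfy $D_{2,j} = \sqrt 2\, C_{2,j}$, i.e.\ that each cluster attain the single-point bound~\eqref{eq:(n,1)-sqrt(2)} with equality. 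From \eqref{eq:(n,1)-sqrt(2)} the latter holds for a cluster exactly when all of its atoms are equidistant from their mean.

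First I would build one cluster that attains the single-point bound. I would take $k := n - m + 1 \ge 2$ points at the vertices of a regular $k$-gon inscribed in a circle of radius $1$ inside a two-dimensional subspace of $\mathbb{R}^d$ (recall $d \ge 2$), centred at its centroid. All vertices are equidistant from the mean, so by \eqref{eq:(n,1)-sqrt(2)} the reduction of this cluster to a single atom obeys $D_{2,\mathrm{poly}} = \sqrt 2\, C_{2,\mathrm{poly}}$; a direct computation using $\sum_{i} \bm\xi_i = \bm 0$ gives $C_{2,\mathrm{poly}}^2 = 1$ and $D_{2,\mathrm{poly}}^2 = 2$. The remaining $m-1$ atoms I would place as isolated singletons $\bm v_1, \ldots, \bm v_{m-1}$, chosen so that all pairwise distances among the singletons, and between each singleton and the polygon, exceed a large constant $L$. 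This yields $n$ distinct atoms, hence a valid $\hat{\mathbb P}_n \in \mathcal{P}_\mathrm{E}(\mathbb{R}^d, n)$.

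Next I would argue that, for $L$ large enough, the \emph{natural} $m$-partition---each singleton alone and the whole polygon as one cluster---is simultaneously optimal for $C_2(\hat{\mathbb P}_n,m)$ (via Theorem~\ref{thm:partition} and Remark~\ref{rem:center}) and for $D_2(\hat{\mathbb P}_n,m)$ (via Theorem~\ref{thm:partition-red}). The separation argument is the only delicate point. By a counting argument, any partition that is not the natural one must place two mutually $L$-separated points (two singletons, or a singleton and a polygon atom) in a common cluster: if no cluster were ``bad'' in this sense, then each of the $m-1$ singletons would form its own group and the remaining single group would have to hold all $k$ polygon atoms, which is exactly the natural partition. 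A bad cluster forces a contribution exceeding $L^2/2$, whether measured against its centroid (for $C_2$) or against its best atom (for $D_2$), whereas the natural partition has cost bounded by the polygon's within-cluster variance plus zero from the singletons. Hence for $L$ above an explicit threshold the natural partition is the unique minimiser in both problems.

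Finally, evaluating both objectives on the natural partition gives $C_2^2(\hat{\mathbb P}_n,m) = \frac{k}{n} C_{2,\mathrm{poly}}^2$ and $D_2^2(\hat{\mathbb P}_n,m) = \frac{k}{n} D_{2,\mathrm{poly}}^2 = \frac{k}{n}\,(2 C_{2,\mathrm{poly}}^2)$, since the singletons contribute zero to both, whence $D_2(\hat{\mathbb P}_n,m) = \sqrt 2\, C_2(\hat{\mathbb P}_n,m)$, as claimed. The case $m = 1$ is the degenerate instance with no singletons, in which $\hat{\mathbb P}_n$ is the regular $n$-gon itself. I expect the main obstacle to be making the separation argument airtight uniformly over all $n$ and $m$; the cleanest route is to fix the polygon's geometry once and for all and then let $L \to \infty$, bounding the cost of every non-natural partition below by a quantity that diverges while the natural cost stays fixed.
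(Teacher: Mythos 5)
Your proposal is correct and follows essentially the same strategy as the paper's own proof: a core cluster of $n-m+1$ atoms equidistant from their mean (you use a regular polygon, the paper uses antipodal pairs plus an equilateral triangle when the cluster size is odd) combined with $m-1$ remote singletons whose separation makes the natural partition optimal for both the discrete and the continuous problem. The only differences are cosmetic, namely your choice of core configuration and your more explicit counting/separation argument, which plays the same role as the paper's condition $M > 2\sqrt{n-m+1}$.
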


\begin{proof} 
In analogy to the proof of Theorem~\ref{thm:red2-sqrt(2)}, we first show the statement for $m = 1$ (Step~1) and then extend the result to $m > 1$ (Step~2).

\paragraph{Step 1:}
The first step in the proof of Theorem~\ref{thm:red2-sqrt(2)} shows that $\hat{\mathbb{P}}_n \in \mathcal{P}_\mathrm{E} (\mathbb{R}^d, n)$ satisfies $D_2(\hat{\mathbb{P}}_n,1) = \sqrt{2} C_2(\hat{\mathbb{P}}_n,1)$ if $\sum_{i \in I} \bm\xi_i = \bm{0}$ and $\Vert \bm\xi_1 \Vert = \hdots = \Vert \bm\xi_n \Vert = 1$. For an even number $n = 2k$, $k \in \mathbb{N}$, both conditions are satisfied if we place $\bm\xi_1, \hdots, \bm\xi_k$ on the surface of the unit ball in $\mathbb{R}^d$ and then choose $\bm\xi_{k+i} = -\bm\xi_i$ for $i = 1, \hdots, k$ (see left panel of Figure~\ref{fig:wsss_2_and_1} for an illustration in $\mathbb{R}^3$). Likewise, for an odd number $n = 2k + 3$, $k \in \mathbb{N}_0$, we can place $\bm\xi_1, \hdots, \bm\xi_k$ on the surface of the unit ball, choose $\bm\xi_{k+i} = -\bm\xi_i$ for $i = 1, \hdots, k$ and fix $\bm\xi_{2k+1} = \mathbf{e}_1$, $\bm\xi_{2k+2} = -\frac{1}{2} \mathbf{e}_1 + \frac{\sqrt{3}}{2} \mathbf{e}_2$ and $\bm\xi_{2k+3} = -\frac{1}{2} \mathbf{e}_1 -\frac{\sqrt{3}}{2} \mathbf{e}_2$.

\paragraph{Step 2:}
To prove the statement for $m > 1$, we construct an empirical distribution $\hat{\mathbb{P}}_n \in \mathcal{P}_\mathrm{E} (\mathbb{R}^d, n)$ whose atoms satisfy $\text{supp} (\hat{\mathbb{P}}_n) = \mathrm{\Xi}_1 \cup \mathrm{\Xi}_2$ with $| \mathrm{\Xi}_1 | = n - m + 1$ and $| \mathrm{\Xi}_2 | = m - 1$. The atoms $\bm{\xi}_1, \ldots, \bm{\xi}_{n-m+1}$ in $\mathrm{\Xi}_1$ are selected according to the recipe outlined in Step~1, whereas the atoms $\bm{\xi}_{n-m+2}, \ldots, \bm{\xi}_n$ in $\mathrm{\Xi}_2$ satisfy $\bm{\xi}_{n-m+1+i} = (1+iM) \mathbf{e}_1$, $i = 1, \ldots, m - 1$, for any number $M$ satisfying $M>2\,\sqrt{n-m+1}$. A direct calculation then shows that the atoms in $\mathrm{\Xi}_2$ are sufficiently far away from those in $\mathrm{\Xi}_1$ as well as from each other so that any optimal partition $\{ I_j^\star \}$ to the discrete scenario reduction problem in Theorem~\ref{thm:partition-red} as well as the continuous scenario reduction problem in Theorem~\ref{thm:partition} consists of the sets $\{ i : \bm\xi_i \in \mathrm{\Xi}_1 \}$ and $\{ i \}$, $\bm\xi_i \in \mathrm{\Xi}_2$. The result then follows from the fact that either problem accumulates a Wasserstein distance of $0$ over the atoms in $\mathrm{\Xi}_2$, whereas the Wasserstein distance of $D_2(\hat{\mathbb{P}}_n,m)$ is a factor of $\sqrt{2}$ bigger than the Wasserstein distance of $C_2(\hat{\mathbb{P}}_n,m)$ over the atoms in $\mathrm{\Xi}_1$ (see Step~1).
\qed
\end{proof}


Theorem~\ref{thm:red2-sqrt(2)} and Proposition~\ref{prop:red2-sqrt(2)} imply that $\overline{\kappa}_2(n,m) = \sqrt{2}$ for all $n$ and $m$, that is, the bound is indeed \emph{independent} of both the number of atoms $n$ in the empirical distribution and the number of atoms $m$ in the reduced distribution. We now show that the na\"ive lower bound of $1$ on the approximation ratio is essentially tight.

\begin{proposition}
\label{prop:red2-1}
The lower bound $\underline{\kappa}_2 (n, m)$ in~\eqref{eq:ratio} satisfies $\underline{\kappa}_2 (n, m) = 1$ whenever $n \geq 3$ and $m \in \{1,\ldots,n-2\}$, while $\underline{\kappa}_2 (n, n-1) = \sqrt{2}$ always.
\end{proposition}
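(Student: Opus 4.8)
The plan rests on a single structural identity. For any cluster $I_j$ and any point $\bm z\in\mathbb{R}^d$, the parallel-axis relation $\sum_{i\in I_j}\|\bm\xi_i-\bm z\|_2^2=\sum_{i\in I_j}\|\bm\xi_i-\mathrm{mean}(I_j)\|_2^2+|I_j|\,\|\bm z-\mathrm{mean}(I_j)\|_2^2$ shows that the discrete per-cluster cost $\min_{\bm\zeta_j\in\{\bm\xi_i:\,i\in I_j\}}\sum_{i\in I_j}\|\bm\xi_i-\bm\zeta_j\|_2^2$ equals the continuous per-cluster cost $\sum_{i\in I_j}\|\bm\xi_i-\mathrm{mean}(I_j)\|_2^2$ precisely when $\mathrm{mean}(I_j)$ coincides with one of the atoms of $I_j$. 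Since a two-atom cluster has its mean strictly between its two distinct atoms, such a coincidence for a non-singleton cluster forces $|I_j|\ge 3$. This dichotomy --- whether a non-singleton cluster may contain three atoms ($m\le n-2$) or is forced to contain exactly two ($m=n-1$) --- is exactly what separates the two regimes of the proposition.

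For the case $n\ge 3$ and $m\le n-2$, since $C_2\le D_2$ always holds, it suffices to exhibit a single $\hat{\mathbb{P}}_n$ attaining $D_2(\hat{\mathbb{P}}_n,m)=C_2(\hat{\mathbb{P}}_n,m)$. First I would place $k:=n-m+1\ge 3$ atoms in a tight cluster with $\bm\xi_1=\bm 0$ and $\bm\xi_2,\dots,\bm\xi_k$ distinct, nonzero and summing to $\bm 0$, so that $\mathrm{mean}\{\bm\xi_1,\dots,\bm\xi_k\}=\bm 0=\bm\xi_1$ is itself an atom. I would then place the remaining $m-1$ atoms pairwise far apart and far from the origin, exactly as in Step~2 of Proposition~\ref{prop:red2-sqrt(2)}. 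For sufficiently large separation, both the continuous problem (Theorem~\ref{thm:partition}) and the discrete problem (Theorem~\ref{thm:partition-red}) are solved by the partition made up of the tight cluster together with $m-1$ singletons; on this partition the singletons contribute zero, and by the structural identity the tight cluster contributes equal discrete and continuous costs because its mean is the atom $\bm\xi_1$. Hence $D_2=C_2$, giving $\underline{\kappa}_2(n,m)\le 1$, which together with the trivial $\underline{\kappa}_2(n,m)\ge 1$ yields $\underline{\kappa}_2(n,m)=1$.

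For the case $m=n-1$ the partition shape is forced by counting: any $\{I_j\}\in\mathfrak{P}(I,n-1)$ splits $n$ atoms into $n-1$ non-empty cells, so exactly one cell is a pair $\{\bm\xi_a,\bm\xi_b\}$ and the other $n-2$ cells are singletons. By Theorem~\ref{thm:partition} and Remark~\ref{rem:center} the continuous cost of such a partition is $\tfrac{1}{n}\cdot\tfrac{1}{2}\|\bm\xi_a-\bm\xi_b\|_2^2$ (the pair contributes $\tfrac12\|\bm\xi_a-\bm\xi_b\|_2^2$ via its midpoint, singletons contribute $0$), whereas by Theorem~\ref{thm:partition-red} its discrete cost is $\tfrac{1}{n}\|\bm\xi_a-\bm\xi_b\|_2^2$ (placing the center at either endpoint). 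Both are minimized over the same choice, the closest pair, so with $\Delta:=\min_{a\ne b}\|\bm\xi_a-\bm\xi_b\|_2$ I get $C_2^2=\tfrac{\Delta^2}{2n}$ and $D_2^2=\tfrac{\Delta^2}{n}$, hence $D_2(\hat{\mathbb{P}}_n,n-1)=\sqrt 2\,C_2(\hat{\mathbb{P}}_n,n-1)$ for \emph{every} $\hat{\mathbb{P}}_n$. This immediately gives $\underline{\kappa}_2(n,n-1)=\sqrt 2$, consistent with $\overline{\kappa}_2(n,n-1)=\sqrt 2$ from Theorem~\ref{thm:red2-sqrt(2)}.

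The only delicate point I anticipate is in the first case: justifying that a large enough separation forces the optimal partitions of \emph{both} scenario-reduction problems to isolate the $m-1$ distant atoms as singletons. This is precisely the separation estimate already used in Step~2 of Proposition~\ref{prop:red2-sqrt(2)}, so I would reuse it rather than redo it. The second case needs no such care: once the pigeonhole argument fixes the partition shape, the conclusion $D_2/C_2\equiv\sqrt 2$ drops out of a one-line computation.
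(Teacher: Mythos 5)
Your proposal is correct and follows essentially the same route as the paper: for $m\le n-2$ you exhibit a distribution whose large cluster has its mean at one of its atoms (padded with $m-1$ well-separated singletons, exactly as in Step~2 of Proposition~\ref{prop:red2-sqrt(2)}), and for $m=n-1$ you use the pigeonhole argument that forces one pair plus singletons, yielding $D_2=\sqrt{2}\,C_2$ identically via the closest pair. The only cosmetic differences are that you merge the paper's separate $m=1$ and $m\in\{2,\dots,n-2\}$ steps into one construction and make the underlying parallel-axis identity explicit, which is a nice clarification but not a different argument.
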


\begin{proof} 
We first prove $\underline{\kappa}_2 (n, m) = 1$ when $m = 1$ and $n\geq3$ (Step~1) and when $m \in \{ 2, \ldots, n - 2 \}$ (Step~2). Then, we show $\underline{\kappa}_2 (n, n - 1) = \sqrt{2}$ (Step~3).

\paragraph{Step 1:}
Choose $\hat{\mathbb{P}}_n \in \mathcal{P}_\mathrm{E} (\mathbb{R}^d, n)$ such that the first $n - 1$ atoms $\bm{\xi}_1, \ldots, \bm{\xi}_{n-1}$ are selected according to the recipe outlined in Step~1 in the proof of Proposition~\ref{prop:red2-sqrt(2)} and $\bm{\xi}_n = \bm{0}$. We thus have $\text{mean} (I) = \bm{0}$, and Theorem~\ref{thm:partition} and Remark~\ref{rem:center} imply that the optimal continuous scenario reduction is given by the Dirac distribution $\delta_{\bm{0}}$. Since $\bm{0} \in \text{supp} (\hat{\mathbb{P}}_n)$, we have $C_2(\hat{\mathbb{P}}_n,1) = D_2(\hat{\mathbb{P}}_n,1)$ and the result follows.

\paragraph{Step 2:}
To prove the statement for $m > 1$, we proceed as in Step~2 in the proof of Proposition~\ref{prop:red2-sqrt(2)}. In particular, we construct an empirical distribution $\hat{\mathbb{P}}_n \in \mathcal{P}_\mathrm{E} (\mathbb{R}^d, n)$ whose atoms satisfy $\text{supp} (\hat{\mathbb{P}}_n) = \mathrm{\Xi}_1 \cup \mathrm{\Xi}_2$ with $| \mathrm{\Xi}_1 | = n - m + 1$ and $| \mathrm{\Xi}_2 | = m - 1$. The atoms $\bm{\xi}_1, \ldots, \bm{\xi}_{m-m+1}$ in $\mathrm{\Xi}_1$ are selected according to the recipe outlined in Step~1 of this proof, whereas the remaining atoms $\bm{\xi}_{n-m+1}, \ldots, \bm{\xi}_n$ in $\mathrm{\Xi}_2$ satisfy $\bm{\xi}_{n-m+1+i} = (1+iM) \mathbf{e}_1$, $i = 1, \ldots, m - 1$, for any $M>2 \, \sqrt{n-m+1}$. A similar argument as in the proof of Proposition~\ref{prop:red2-sqrt(2)} then shows that $C_2(\hat{\mathbb{P}}_n,m) = D_2(\hat{\mathbb{P}}_n,m)$.

\paragraph{Step 3:}
Fix any $\hat{\mathbb{P}}_n \in \mathcal{P}_\mathrm{E} (\mathbb{R}^d, n)$. W.l.o.g., assume that $\{ \bm{\xi}_{n-1}, \bm{\xi}_n \}$ is the closest pair of atoms in terms of Euclidean distance, and let $d_{\text{min}} = \Vert \bm\xi_{n} - \bm\xi_{n-1} \Vert_2$. One readily verifies that the partition $I_j^\star = \{ j \}$, $j = 1, \ldots, n - 2$, and $I_{n-1}^\star = \{ n-1, n \}$ optimizes both the discrete scenario reduction problem in Theorem~\ref{thm:partition-red} as well as the continuous scenario reduction problem in Theorem~\ref{thm:partition}. We thus have $C_2(\hat{\mathbb{P}}_n, n-1) = \frac{1}{\sqrt{2n}}d_{\text{min}}$ and $D_2(\hat{\mathbb{P}}_n, n-1) = \frac{1}{\sqrt{n}}d_{\text{min}}$, which concludes the proof.
	\qed 
\end{proof}


Hence, for any empirical distribution $\hat{\mathbb{P}}_n \in \mathcal{P}_\mathrm{E} (\mathbb{R}^d, n)$ the type-$2$ Wasserstein distance between the minimizer of the \emph{discrete} scenario reduction problem and $\hat{\mathbb{P}}_n$ exceeds the Wasserstein distance between the minimizer of the \emph{continuous} scenario reduction problem and $\hat{\mathbb{P}}_n$ by up to $41.4\%$, and the bound is attainable for any~$n,m$.


\subsection{Guarantees for the Type-1 Wasserstein Distance}
\label{section:compare-1}

In analogy to Section~\ref{section:compare-2}, we first bound $\overline{\kappa}_1 (n, m)$ from above (Theorem~\ref{thm:red1-2}) and below (Proposition~\ref{prop:kap1:below}). In contrast to the previous section, we consider an arbitrary norm $\Vert \cdot \Vert$, and we adapt the definition of the geometric median accordingly.


\begin{theorem}
\label{thm:red1-2}
The upper bound $\overline{\kappa}_1 (n,m)$ in~\eqref{eq:ratio} satisfies $\overline{\kappa}_1 (n,m) \leq 2$ whenever $m \in \{ 2, \ldots, n - 2\}$ as well as $\overline{\kappa}_1 (n,1) \leq 2 \left( 1 - \frac{1}{n} \right)$ and $\overline{\kappa}_1 (n, n-1) \leq 1$.
\end{theorem}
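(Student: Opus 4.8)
The plan is to mirror the two-step architecture of the proof of Theorem~\ref{thm:red2-sqrt(2)}: first establish the bound for the single-centroid case $m=1$, then bootstrap to general $m$ by decomposing along an optimal continuous partition. Throughout I would invoke Theorem~\ref{thm:partition}, Theorem~\ref{thm:partition-red} and Remark~\ref{rem:center} to express both scenario reduction problems as partitioning problems whose inner minimizers are geometric medians (continuous) or best atoms (discrete), which remains valid for the arbitrary norm $\Vert\cdot\Vert$ under consideration. Since $m\le n-1$ and the atoms are distinct, at least one cluster carries two distinct points, so $C_1(\hat{\mathbb{P}}_n,m)>0$ and all ratios in~\eqref{eq:ratio} are well defined.

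Step~1 ($m=1$) is the analytic heart. Writing $\bm\mu=\text{gmed}(I)$, we have $C_1(\hat{\mathbb{P}}_n,1)=\frac1n\sum_{i\in I}\Vert\bm\xi_i-\bm\mu\Vert$, and I would pick $j^\star\in\arg\min_{j\in I}\Vert\bm\xi_j-\bm\mu\Vert$, the atom closest to the geometric median. The crude estimate $\Vert\bm\xi_i-\bm\xi_{j^\star}\Vert\le\Vert\bm\xi_i-\bm\mu\Vert+\Vert\bm\mu-\bm\xi_{j^\star}\Vert$ together with the averaging bound $\Vert\bm\xi_{j^\star}-\bm\mu\Vert\le C_1(\hat{\mathbb{P}}_n,1)$ already yields the factor~$2$. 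To sharpen this to $2(1-\tfrac1n)$, I would exploit that the summand $i=j^\star$ contributes zero distance, so the triangle inequality need only be applied to the remaining $n-1$ terms:
\[
D_1(\hat{\mathbb{P}}_n,1)\le\frac1n\sum_{i\neq j^\star}\big(\Vert\bm\xi_i-\bm\mu\Vert+\Vert\bm\mu-\bm\xi_{j^\star}\Vert\big)=C_1(\hat{\mathbb{P}}_n,1)+\frac{n-2}{n}\,\Vert\bm\xi_{j^\star}-\bm\mu\Vert,
\]
and bounding $\Vert\bm\xi_{j^\star}-\bm\mu\Vert\le C_1(\hat{\mathbb{P}}_n,1)$ then delivers $D_1(\hat{\mathbb{P}}_n,1)\le 2(1-\tfrac1n)\,C_1(\hat{\mathbb{P}}_n,1)$. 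The careful bookkeeping that extracts the $\tfrac1n$ saving—removing one copy of $\Vert\bm\xi_{j^\star}-\bm\mu\Vert$ when discarding the self-term—is the one place where precision matters.

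Step~2 handles general $m$. Unlike the type-$2$ objective, the type-$1$ objective decomposes \emph{linearly} over clusters, so for an optimal continuous partition $\{I_j^\star\}$ I can write $C_1(\hat{\mathbb{P}}_n,m)=\frac1n\sum_j|I_j^\star|\,C_{1,j}$ with $C_{1,j}=C_1(\hat{\mathbb{P}}_n^{\,j},1)$, and, using the same partition as a feasible (generally suboptimal) choice for the discrete problem, $D_1(\hat{\mathbb{P}}_n,m)\le\frac1n\sum_j|I_j^\star|\,D_{1,j}$ with $D_{1,j}=D_1(\hat{\mathbb{P}}_n^{\,j},1)$. Applying Step~1 to each cluster gives $D_{1,j}\le 2(1-\tfrac1{|I_j^\star|})C_{1,j}$, whence
\[
\frac{D_1(\hat{\mathbb{P}}_n,m)}{C_1(\hat{\mathbb{P}}_n,m)}\le\frac{2\sum_j(|I_j^\star|-1)C_{1,j}}{\sum_j|I_j^\star|C_{1,j}}\le 2\,\max_j\frac{|I_j^\star|-1}{|I_j^\star|},
\]
where the last step reads the middle expression as a convex combination of the per-cluster factors $1-\tfrac1{|I_j^\star|}$. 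Since the remaining $m-1$ clusters are nonempty, $\max_j|I_j^\star|\le n-m+1$, yielding the unified bound $\overline{\kappa}_1(n,m)\le 2\big(1-\tfrac1{n-m+1}\big)$. All three claimed estimates follow by specialization: $m=1$ gives $2(1-\tfrac1n)$; $m=n-1$ forces a single two-atom cluster and otherwise singletons, giving $2(1-\tfrac12)=1$ (consistent with the elementary fact that for a two-atom cluster the best atom and the geometric median both incur Wasserstein distance $\tfrac12\Vert\bm\xi_a-\bm\xi_b\Vert$, so discrete equals continuous); and $m\in\{2,\ldots,n-2\}$ gives a value strictly below $2$, hence $\le 2$. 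The principal obstacle is thus not any single estimate but the combination step—correctly turning the size-dependent per-cluster factors into a global bound governed by the largest admissible cluster size $n-m+1$.
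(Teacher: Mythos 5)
Your proof is correct, and while it shares the paper's two-step architecture (a single-centroid bound followed by a decomposition along an optimal continuous partition), your combination step is a genuine refinement of the paper's argument. Step~1 is essentially the paper's: the paper normalizes $\text{gmed}(I)=\bm{0}$ and $\frac{1}{n}\sum_i\Vert\bm\xi_i\Vert=1$ and applies the triangle inequality over the $n-1$ non-self terms, which is exactly your bookkeeping with $\bm\mu$ kept explicit; both arguments hinge on the same averaging bound $\min_j\Vert\bm\xi_j-\bm\mu\Vert\le C_1(\hat{\mathbb{P}}_n,1)$. The difference is in Step~2: the paper discards the cluster-size-dependent factor, using only $D_{1,j}\le 2\,C_{1,j}$, which yields the bound $2$ for $m\in\{2,\ldots,n-2\}$ but forces a separate third step for $m=n-1$, handled by a structural argument (as in Step~3 of Proposition~\ref{prop:red2-1}) showing that the optimal discrete and continuous partitions coincide and consist of singletons plus the closest pair. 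You instead retain the per-cluster factors $2\bigl(1-\tfrac{1}{\vert I_j^\star\vert}\bigr)$, read the resulting ratio as a convex combination (with weights $\vert I_j^\star\vert C_{1,j}$, the degenerate clusters with $C_{1,j}=0$ dropping out since $C_1(\hat{\mathbb{P}}_n,m)>0$), and invoke the cluster-size cap $\vert I_j^\star\vert\le n-m+1$. This buys you a unified bound $\overline{\kappa}_1(n,m)\le 2\bigl(1-\tfrac{1}{n-m+1}\bigr)$ that specializes to all three claims at once, is strictly stronger than the paper's bound of $2$ for intermediate $m$, and eliminates the need for the paper's separate $m=n-1$ argument entirely. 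Note also that your strengthened bound is consistent with Proposition~\ref{prop:kap1:below}, which shows $\overline{\kappa}_1(n,m)\ge 2\bigl(1-\tfrac{m}{n}\bigr)$; the two leave a gap that your result narrows but does not close.
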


\begin{proof} 
We first prove the statement for $m = 1$ (Step~1) and then extend the result to $m \in \{ 2, \ldots, n - 2 \}$ (Step~2) and $m = n - 1$ (Step~3).

\paragraph{Step 1:}
Fix any $\hat{\mathbb{P}}_n \in \mathcal{P}_\mathrm{E}(\mathbb{R}^d, n)$. As in the proof of Theorem~\ref{thm:red2-sqrt(2)}, we can assume that $\text{gmed} (I) = \bm{0}$ and $\frac{1}{n} \sum_{i \in I} \Vert \bm\xi_i \Vert = 1$ by re-positioning and scaling the atoms $\bm\xi_i$ appropriately. Theorem~\ref{thm:partition} and Remark~\ref{rem:center} then imply that for $m = 1$, we have
	\begin{equation*}
	\begin{aligned}
		C_1(\hat{\mathbb{P}}_n,1) \ =\ 
		\frac{1}{n} \sum_{i \in I} \Vert \bm\xi_i - \text{gmed}(I) \Vert = 1.
	\end{aligned}
	\end{equation*}	
Step 1 is thus complete if we can show that $D_1(\hat{\mathbb{P}}_n,1) \leq 2 \left(1 - \frac{1}{n} \right)$. Indeed, we have
	\begin{equation*}
	\begin{aligned}
		D_1(\hat{\mathbb{P}}_n, 1)
		&=\ \min_{j \in I}\ \frac{1}{n} \sum_{i \in I} \Vert \bm\xi_i - \bm\xi_j \Vert \
		=\ \min_{j \in I}\ \frac{1}{n} \sum_{i \in I \setminus \{j\}} \Vert \bm\xi_i - \bm\xi_j \Vert \\
		&\leq\ \min_{j \in I}\ \frac{1}{n} \sum_{i \in I \setminus \{j\}} \left( \Vert \bm\xi_i \Vert + \Vert \bm\xi_j \Vert \right) \
		=\ \min_{j \in I}\ \frac{1}{n} \left( (n-2)\Vert \bm\xi_j \Vert + \sum_{i \in I} \Vert \bm\xi_i \Vert \right) \\
		&=\ \min_{j \in I}\ \frac{1}{n} \left( (n-2)\Vert \bm\xi_j \Vert + n \right) \
		=\ 1 + \frac{n-2}{n} \cdot \min_{j \in I}\ \Vert \bm\xi_j \Vert \
		\leq\ 2 \left( 1 - \frac{1}{n}\right),
	\end{aligned}
	\end{equation*}
	where the two inequalities follow from the triangle inequality and the fact that $\min_{j \in I} \Vert \bm\xi_j \Vert \leq \frac{1}{n} \sum_{i \in I} \Vert \bm\xi_i \Vert = 1$, respectively.

\paragraph{Step 2:}
Fix any $\hat{\mathbb{P}}_n \in \mathcal{P}_\mathrm{E}(\mathbb{R}^d, n)$. Theorem~\ref{thm:partition} and Remark~\ref{rem:center} then imply that
    \begin{equation*}
	\begin{aligned}
		C_1(\hat{\mathbb{P}}_n,m) =
		\min_{\{ I_j\} \in \mathfrak{P}(I,m)}\frac{1}{n} \sum_{j \in J} \sum_{i \in I_j} \Vert \bm\xi_i - \text{gmed}(I_j) \Vert.
	\end{aligned}
	\end{equation*}
    Let $\{ I_j^\star \}$ be an optimal partition for this problem. The same arguments as in the proof of Theorem~\ref{thm:red2-sqrt(2)} show that
    \begin{equation*}
    \begin{aligned}
		D_1(\hat{\mathbb{P}}_n,m) \; &\leq \;
        \sum_{j \in J} \frac{\vert I^\star_j \vert}{n} D_{1,j} \mspace{40mu} \text{with}~~
		D_{1,j} = \min_{j \in I^\star_j} \frac{1}{\vert I^\star_j \vert} \sum_{i \in I^\star_j} \Vert \bm\xi_i - \bm\xi_j \Vert \\
 	    &\leq \; \sum_{j \in J} \frac{\vert I^\star_j \vert}{n} (2 \, C_{1,j}) \quad \text{with}~~
		C_{1,j} = \frac{1}{\vert I^\star_j \vert} \sum_{i \in I^\star_j} \Vert \bm\xi_i - \text{gmed}(I^\star_j) \Vert,
    \end{aligned}
    \end{equation*}	 
    and the last expression is equal to $2 \, C_1(\hat{\mathbb{P}}_n,m)$ by definition of $C_{1,j}$.

\paragraph{Step 3:}
For $n=2$ and $m=n-1=1$, Step~1 shows that $\overline{\kappa}_1 (2,1) \leq 2 \left( 1 - \frac{1}{2} \right) = 1$. For $n > 2$ and $m=n-1$, the statement can be derived in the same way as the third step in the proof of Proposition~\ref{prop:red2-1}. We omit the details for the sake of brevity.
\qed 
\end{proof}

\begin{proposition}
\label{prop:kap1:below}
There is $\hat{\mathbb{P}}_n \in \mathcal{P}_\mathrm{E}(\mathbb{R}^d, n)$ such that $D_1 (\hat{\mathbb{P}}_n, m) = 2\left( 1 - \frac{m}{n}\right) C_1 (\hat{\mathbb{P}}_n, m)$ under the $1$-norm for all $n$ divisible by $2m$, all $m$ and all $d \geq \frac{n}{2m}$.
\end{proposition}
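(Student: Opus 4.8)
The plan is to glue together $m$ identical, mutually remote copies of the single-center ($m=1$) worst case isolated in Step~1 of the proof of Theorem~\ref{thm:red1-2}. Write $k = n/(2m)$, which is a positive integer by the divisibility hypothesis, and recall $d \geq k$. For each cluster $j \in J$ I would place $2k$ atoms at the signed unit vectors $\pm\mathbf{e}_1, \ldots, \pm\mathbf{e}_k$ shifted by a common offset $\bm{c}_j = jM\mathbf{e}_1$, i.e.\ at $\bm{c}_j \pm \mathbf{e}_s$ for $s = 1, \ldots, k$, where $M$ is taken large. As in Step~2 of the proofs of Propositions~\ref{prop:red2-sqrt(2)} and~\ref{prop:red2-1}, choosing $M$ large enough guarantees that any $m$-set partition that puts atoms of two different clusters into one part incurs a cost of order $M$; since a counting argument shows that a partition without such a ``spanning'' part must already be the cluster partition, every optimal partition for both the continuous problem (Theorem~\ref{thm:partition}) and the discrete problem (Theorem~\ref{thm:partition-red}) coincides with the cluster partition $\{I_1, \ldots, I_m\}$, where $I_j$ collects the indices of cluster $j$. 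Because translations leave $1$-norm distances and geometric medians unchanged, it then suffices to analyze one centered cluster with atoms $\{\pm\mathbf{e}_s : s = 1, \ldots, k\} \subset \mathbb{R}^k$.

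For such a centered cluster I would verify the two tightness conditions read off from Step~1 of Theorem~\ref{thm:red1-2}. First, the atom set is invariant under all signed coordinate permutations, so the origin is a geometric median: pairing $\mathbf{e}_s$ with $-\mathbf{e}_s$ shows $\bm{0} \in \partial f(\bm{0})$ for $f(\bm\zeta) = \sum_{s} \bigl( \|\mathbf{e}_s - \bm\zeta\|_1 + \|-\mathbf{e}_s - \bm\zeta\|_1 \bigr)$, and convexity promotes this to a global minimizer. As every atom has unit $1$-norm, the continuous cluster cost is $\frac{1}{2k} \sum_{i} \|\bm\xi_i\|_1 = 1$. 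Second, for the discrete problem I would pick any atom, say $\mathbf{e}_1$, and observe that $\|\bm\xi_i - \mathbf{e}_1\|_1 = \|\bm\xi_i\|_1 + 1$ holds coordinate-wise for every other atom $\bm\xi_i \in \{-\mathbf{e}_1\} \cup \{\pm\mathbf{e}_s : s \neq 1\}$, so the triangle inequality used in Step~1 of Theorem~\ref{thm:red1-2} is sharp. By symmetry every admissible discrete center yields the same value, so the discrete cluster cost equals $\frac{1}{2k}\bigl[(2k-1)+(2k-1)\bigr] = 2 - \tfrac{1}{k}$, which is exactly the $m=1$ bound $2(1-\tfrac{1}{2k})$ of Theorem~\ref{thm:red1-2} for a cluster of $2k = n/m$ points.

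Reassembling the clusters under the uniform weights $1/n$, the per-cluster costs add up identically. Since $2k = n/m$ and $\tfrac{1}{2k} = \tfrac{m}{n}$, I obtain $C_1(\hat{\mathbb{P}}_n, m) = 1$ and $D_1(\hat{\mathbb{P}}_n, m) = 2\bigl(1 - \tfrac{1}{2k}\bigr) = 2\bigl(1 - \tfrac{m}{n}\bigr)$, whence $D_1(\hat{\mathbb{P}}_n, m) = 2(1 - \tfrac{m}{n})\, C_1(\hat{\mathbb{P}}_n, m)$, as claimed. The cases $d > k$ follow by embedding the construction in the first $k$ coordinates, exactly as in Proposition~\ref{prop:wcwd-2}.

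The main obstacle I anticipate is not the within-cluster arithmetic, which is routine once the signed-unit-vector configuration is in hand, but rather making the separation argument airtight for \emph{both} problems at once: one must confirm that a sufficiently large offset $M$ forces every optimal partition of the unconstrained (continuous) \emph{and} the constrained (discrete) problem to respect the clusters, and one must check that the origin is a genuine minimizer of the $1$-norm objective rather than a mere symmetric critical point, which the subgradient pairing together with convexity secures.
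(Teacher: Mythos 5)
Your proposal is correct and follows essentially the same route as the paper's proof: the paper's Step~1 analyzes exactly your centered cluster $\{\pm\mathbf{e}_s\}$ (geometric median at the origin giving continuous cost $1$, all atoms pairwise equidistant at $1$-norm distance $2$ giving discrete cost $2(1-\tfrac{1}{2k})$), and its Step~2 glues $m$ such clusters at offsets $jM\mathbf{e}_1$ with $M>2n+2$, forcing every optimal partition of both problems to respect the clusters — precisely your separation-plus-counting argument. The only difference is cosmetic: the paper fixes the explicit threshold $M>2n+2$ where you leave $M$ ``large,'' and it states the within-cluster analysis as the standalone $m=1$ case rather than as a translated cluster.
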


\begin{proof} 
We first prove the statement for $m = 1$ (Step~1) and then extend the result to $m > 1$ (Step~2). Throughout the proof, we set $k = \frac{n}{2m}$ and consider w.l.o.g. the case where $d = k$.

\paragraph{Step 1:}
Fix $\hat{\mathbb{P}}_n \in \mathcal{P}_\mathrm{E} (\mathbb{R}^d, n)$ with the atoms $\bm{\xi}_i = +\mathbf{e}_i$ as well as $\bm{\xi}_{k+i} = -\mathbf{e}_i$, $i = 1, \ldots, k$. The symmetric placement of the atoms implies that $\text{gmed} (I) = \bm{0}$ and hence $C_1(\hat{\mathbb{P}}_n,1) = 1$. Furthermore, we note that $\Vert \bm\xi_i - \bm\xi_j \Vert_1 = 2$ for all $i \neq j$, that is, any two atoms are equidistant from another (see right panel of Figure~\ref{fig:wsss_2_and_1} for an illustration in $\mathbb{R}^3$). By Theorem~\ref{thm:partition-red}, any $1$-point discrete scenario reduction results in a Wasserstein distance of $2 \, \frac{n-1}{n}$ to $\hat{\mathbb{P}}_n$.

\paragraph{Step 2:}
To prove the statement for $m > 1$, we construct an empirical distribution $\hat{\mathbb{P}}_n \in \mathcal{P}_\mathrm{E} (\mathbb{R}^d, n)$ whose atoms satisfy $\text{supp} (\hat{\mathbb{P}}_n) = \bigcup_{j=1}^m (\mathrm{\Xi}_j^+ \cup \mathrm{\Xi}_j^-)$ with $| \mathrm{\Xi}_j^+ | = | \mathrm{\Xi}_j^- | = k$, $j = 1, \ldots, m$. The atoms $\bm{\xi}_{2(j-1)k+1}, \ldots, \bm{\xi}_{2(j-1)k+k}$ in $\mathrm{\Xi}_j^+$ satisfy $\bm{\xi}_{2(j-1)k+i} = +\mathbf{e}_i + jM\mathbf{e}_1$, $i = 1, \ldots, k$, whereas the atoms $\bm{\xi}_{2(j-1)k+k+1}, \ldots, \bm{\xi}_{2jk}$ in $\mathrm{\Xi}_j^-$ satisfy $\bm{\xi}_{2(j-1)k+k+i} = -\mathbf{e}_i + jM\mathbf{e}_1$, $i = 1, \ldots, k$, for any number $M$ satisfying $M > 2n+2$. The same arguments as in the proof of Theorem~\ref{thm:red2-sqrt(2)} show that any optimal partition $\{ I_j^\star \}$ to the discrete scenario reduction problem in Theorem~\ref{thm:partition-red} as well as the continuous scenario reduction problem in Theorem~\ref{thm:partition} consists of the sets indexing the atoms in $\mathrm{\Xi}_j^+ \cup \mathrm{\Xi}_j^-$, $j = 1, \ldots, m$. Step~1 shows that the continuous scenario reduction problem accumulates a Wasserstein distance of $1$ over each set, whereas the discrete scenario reduction problem accumulates a Wasserstein distance of $2 \, \frac{2k-1}{2k}$ over each set. The result then follows from the fact that there are $m$ such sets and hence the ratio of the respective overall Wasserstein distances amounts to $m \big( 2\frac{2k-1}{2k} \big) / m = 2 \left( 1-\frac{m}{n} \right)$.
    \qed
\end{proof}

Theorem~\ref{thm:red1-2} and Proposition~\ref{prop:kap1:below} imply that $\overline{\kappa}_1 (n, m) \in [2 (1 - m/n), 2]$ for all $n$ and $m \in \{ 2, \ldots, n - 2 \}$. For the small ratios $m:n$ commonly used in practice, we thus conclude that the bound is \emph{essentially independent} of both the number of atoms $n$ in the empirical distribution and the number of atoms $m$ in the reduced distribution. We close with an analysis of the lower bound $\underline{\kappa}_1 (n, m)$.

\begin{proposition}
\label{prop:red1-1}
The lower bound $\underline{\kappa}_1 (n, m)$ in~\eqref{eq:ratio} satisfies $\underline{\kappa}_1 (n, m) = 1$ for all $n,m$.
\end{proposition}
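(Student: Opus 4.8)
The plan is to establish that $\underline{\kappa}_1(n,m)=1$, which by definition~\eqref{eq:ratio} requires exhibiting, for each admissible pair $(n,m)$, an empirical distribution $\hat{\mathbb{P}}_n \in \mathcal{P}_\mathrm{E}(\mathbb{R}^d,n)$ for which the discrete and continuous scenario reduction problems attain the same value, i.e.\ $D_1(\hat{\mathbb{P}}_n,m) = C_1(\hat{\mathbb{P}}_n,m)$. Since we always have $D_1 \geq C_1$ (the discrete problem is a restriction of the continuous one), and since $\underline{\kappa}_1(n,m) \geq 1$ trivially from $D_1(\hat{\mathbb{P}}_n,m) \geq C_1(\hat{\mathbb{P}}_n,m)$, it suffices to produce a single witness distribution achieving equality. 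This mirrors exactly the strategy used for $\underline{\kappa}_2(n,m)=1$ in Proposition~\ref{prop:red2-1}.

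\emph{First} I would handle the base case $m=1$ following Step~1 of Proposition~\ref{prop:red2-1}. The key observation is that equality $D_1(\hat{\mathbb{P}}_n,1)=C_1(\hat{\mathbb{P}}_n,1)$ holds whenever the geometric median of the atoms coincides with one of the atoms themselves. Concretely, I would place $n-1$ atoms symmetrically so that their geometric median is $\bm{0}$ (for instance using the antipodal/equilateral construction from Step~1 of Proposition~\ref{prop:red2-sqrt(2)}, adapted to the relevant norm), and then set the final atom $\bm{\xi}_n = \bm{0}$. By Theorem~\ref{thm:partition} and Remark~\ref{rem:center}, the optimal continuous reduction is the Dirac mass at $\text{gmed}(I)=\bm{0}$; but since $\bm{0} \in \text{supp}(\hat{\mathbb{P}}_n)$, this same Dirac distribution is feasible for the discrete problem, forcing $D_1(\hat{\mathbb{P}}_n,1)=C_1(\hat{\mathbb{P}}_n,1)$ and hence the ratio equals $1$.

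\emph{Next} I would extend to general $m \geq 2$ by the well-by-now separation argument used repeatedly (Step~2 of Propositions~\ref{prop:red2-sqrt(2)} and~\ref{prop:red2-1}). I would take $\text{supp}(\hat{\mathbb{P}}_n) = \mathrm{\Xi}_1 \cup \mathrm{\Xi}_2$ with $|\mathrm{\Xi}_1| = n-m+1$ and $|\mathrm{\Xi}_2| = m-1$, where the atoms in $\mathrm{\Xi}_1$ realize the $m=1$ construction above and the $m-1$ atoms in $\mathrm{\Xi}_2$ are placed as isolated, mutually distant singletons (e.g.\ $\bm{\xi} = (1+iM)\mathbf{e}_1$ for a large constant $M$). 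For $M$ large enough, both the discrete and continuous optimal partitions isolate each atom of $\mathrm{\Xi}_2$ into its own cell (contributing zero Wasserstein cost) and group all of $\mathrm{\Xi}_1$ into one cell, which by the $m=1$ analysis contributes equal discrete and continuous cost. Hence $D_1(\hat{\mathbb{P}}_n,m)=C_1(\hat{\mathbb{P}}_n,m)$ and the ratio is again $1$.

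\emph{The main obstacle} I anticipate is the geometric-median step for a general norm $\Vert\cdot\Vert$, since the theorem statement (unlike the type-$2$ case) is phrased for arbitrary norms in this section. For the $2$-norm the symmetric construction makes $\bm{0}$ the unique minimizer by strict convexity, but for a general norm I must verify that $\bm{0} \in \partial f(\bm{0})$ where $f(\bm\zeta)=\sum_{i}\Vert\bm\xi_i-\bm\zeta\Vert$, i.e.\ that the subdifferential optimality condition $\bm{0}\in\sum_i \partial\Vert\,\cdot\,\Vert(\bm\xi_i)$ holds at the symmetric configuration. The antipodal pairing $\bm\xi_{k+i}=-\bm\xi_i$ is precisely what I would exploit: by symmetry of the norm, subgradient contributions from each antipodal pair cancel, certifying $\bm{0}$ as a geometric median; for odd counts one absorbs the leftover atoms into a small symmetric cluster as in Step~1 of Proposition~\ref{prop:red2-sqrt(2)}. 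Everything else is the routine separation estimate already deployed elsewhere, so I would state it briefly and defer the arithmetic.
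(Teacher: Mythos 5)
Your proposal takes the same route as the paper's own (sketched) proof: both reduce the claim to exhibiting a witness distribution in which a geometric median of each cluster of the optimal partition coincides with one of that cluster's atoms, so that discrete and continuous scenario reduction pick the same support points and $D_1(\hat{\mathbb{P}}_n,m)=C_1(\hat{\mathbb{P}}_n,m)$; the paper explicitly describes this as a parallel of Proposition~\ref{prop:red2-1} with the mean replaced by a geometric median and omits all construction details ``for brevity.'' Your treatment of the trivial direction $\underline{\kappa}_1(n,m)\geq 1$, of the base case $m=1$, and of the far-separation argument for $m\geq 2$ is correct, and the construction is complete for the $2$-norm.

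The one loose end is precisely the point you flag: Section~\ref{section:compare-1} works with an \emph{arbitrary} norm. Your antipodal cancellation is valid in that generality, since $\bm g\in\partial\Vert\cdot\Vert(\bm\xi)$ implies $-\bm g\in\partial\Vert\cdot\Vert(-\bm\xi)$, and the atom at $\bm 0$ contributes the full dual unit ball, which contains $\bm 0$. However, the $120^\circ$ triangle you borrow from Proposition~\ref{prop:red2-sqrt(2)} for odd leftover counts is Euclidean-specific. Under the $1$-norm, for instance, the Minkowski sum of the subdifferentials at $\bm 0$ of $\bm\zeta\mapsto\Vert\bm\zeta-\mathbf e_1\Vert_1$, $\bm\zeta\mapsto\Vert\bm\zeta+\tfrac12\mathbf e_1-\tfrac{\sqrt{3}}{2}\mathbf e_2\Vert_1$ and $\bm\zeta\mapsto\Vert\bm\zeta+\tfrac12\mathbf e_1+\tfrac{\sqrt{3}}{2}\mathbf e_2\Vert_1$ equals $\{(1,t)^\top:\vert t\vert\leq 1\}$, which does not contain $\bm 0$ (the $1$-norm geometric median of these three points is $(-\tfrac12,0)^\top$, not the origin). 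In that example the residual happens to be absorbed by the dual unit ball contributed by the atom at $\bm 0$, but for a general norm you have not shown that the three triangle subgradients admit a selection of dual norm at most one, so this step is unproven. A norm-free repair that is fully consistent with the paper's sketch: place each cluster's atoms on a line, $\bm\xi_i=t_i\mathbf e_1$ with $t_1<\cdots<t_k$; pairing $t_i$ with $t_{k+1-i}$ and applying the triangle inequality shows that a median atom is a geometric median of the cluster under \emph{any} norm, and your separation step for $m\geq 2$ then goes through verbatim.
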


\begin{proof} 
The proof widely parallels that of Proposition~\ref{prop:red2-1}, with the difference that the atoms $\bm{\xi}_1, \ldots, \bm{\xi}_n$ of the empirical distribution $\hat{\mathbb{P}}_n$ are placed such that a geometric median (as opposed to the mean) of each subset in the optimal partition coincides with one of the atoms in that subset. This allows both continuous and discrete scenario reduction to choose the same support points for the reduced distribution, hence incurring the same Wasserstein distance. Details are omitted for brevity.
\qed
\end{proof}

In conclusion, for any $\hat{\mathbb{P}}_n \in \mathcal{P}_\mathrm{E} (\mathbb{R}^d, n)$ the type-$1$ Wasserstein distance between the minimizer of the \emph{discrete} scenario reduction problem and $\hat{\mathbb{P}}_n$ exceeds the Wasserstein distance between the minimizer of the \emph{continuous} scenario reduction problem and $\hat{\mathbb{P}}_n$ by up to $100\%$, and this bound is asymptotically attained for decreasing ratios $m : n$.


\section{Solution Methods}
\label{section:algorithm}

We now review existing and propose new solution schemes for the discrete and continuous scenario reduction problems. More precisely, we will study two heuristics for discrete and continuous scenario reduction, respectively, that do not come with approximation guarantees (Section~\ref{sec:pure_heuristics}), we will propose a constant-factor approximation scheme for both the discrete and the continuous scenario reduction problem (Section~\ref{sec:constant_fac}), and we will discuss two exact reformulations of these problems as mixed-integer optimization problems (Section~\ref{sec:mixed_integer}).

In the remainder of this section, we denote by $D_l (\hat{\mathbb{P}}_n, \mathrm{\Xi})$ the type-$l$ Wasserstein distance between $\hat{\mathbb{P}}_n$ and its closest distribution supported on the finite set~$\mathrm{\Xi}$. Moreover, for an algorithm providing an upper bound $\overline{D}_l (\hat{\mathbb{P}}_n, m)$ on the discrete scenario reduction problem in $\mathbb{R}^d$, we define the algorithm's \emph{approximation ratio} as the maximum fraction $\overline{D}_l (\hat{\mathbb{P}}_n, m) / D_l (\hat{\mathbb{P}}_n, m)$, where the maximum is taken over all $n$ and $m$, as well as all empirical distributions $\hat{\mathbb{P}}_n \in \mathcal{P}_\mathrm{E} (\mathbb{R}^d, n)$.


\subsection{Heuristics for the Discrete Scenario Reduction Problem}\label{sec:pure_heuristics}

We review in Section~\ref{sec:dupacova-and-coworkers} a popular heuristic for the discrete scenario reduction problem due to \cite{Dupacova2003}. We will show that despite the simplicity and efficiency of the algorithm, there is no finite upper bound on the algorithm's approximation ratio. In Section~\ref{sec:lloyd} we adapt a widely used clustering heuristic to the continuous scenario reduction problem, and we show that this algorithm's approximation ratio cannot be bounded from above either.

\subsubsection{Dupa{\v{c}}ov{\'a} et al.'s Algorithm}\label{sec:dupacova-and-coworkers}

We outline Dupa{\v{c}}ov{\'a} et al.'s algorithm for the problem $D_l (\hat{\mathbb{P}}_n, m)$ below.

\begin{center}
\fbox{\begin{minipage}{26.5em}
\centering \underline{\textsc{Dupa{\v{c}}ov{\'a} et al.'s algorithm for $D_l (\hat{\mathbb{P}}_n, m)$:}} \\[-2mm]
\begin{enumerate}
	\item
	Initialize the set of atoms in the reduced set as $R \leftarrow \emptyset$. \\[-2mm]
	\item
	Select the next atom to be added to the reduced set as
	\begin{equation*}
	\bm{\zeta} \in \underset{\bm{\zeta} \in \text{supp}(\hat{\mathbb P}_n )}{\argmin} \ D_l (\hat{\mathbb{P}}_n, R \cup \{ \bm{\zeta} \})
	\end{equation*}
	and update $R \leftarrow R \cup \{ \bm{\zeta} \}$. \\
	\item
	Repeat Step 2 until $\vert R \vert = m$.
\end{enumerate} 
\end{minipage}}
\end{center}

Given an empirical distribution $\hat{\mathbb{P}}_n \in \mathcal{P}_\mathrm{E} (\mathbb{R}^d, n)$, the algorithm iteratively populates the reduced set $R$ containing the atoms of the reduced distribution~$\mathbb{Q}$. Each atom $\bm{\zeta} \in \text{supp}(\hat{\mathbb P}_n)$ is selected greedily so as to minimize the Wasserstein distance between $\hat{\mathbb{P}}_n$ and the closest distribution supported on the augmented reduced set $R \cup \{ \bm{\zeta} \}$. After termination, the distribution $\mathbb{Q}$ can be recovered from the reduced set $R$ as follows. Let $\{ I_{\bm{\zeta}} \} \in \mathfrak{P} (I, m)$ be any partition of $\text{supp}(\hat{\mathbb P}_n)$ into sets $I_{\bm{\zeta}}$, $\bm{\zeta} \in R$, such that $I_{\bm{\zeta}}$ contains all elements of $\text{supp}(\hat{\mathbb P}_n)$ that are closest to $\bm{\zeta}$ (ties may be broken arbitrarily). Then $\mathbb{Q} = \sum_{\bm{\zeta} \in R} q_{\bm{\zeta}} \delta_{\bm{\zeta}}$, where $q_{\bm{\zeta}} = | I_{\bm{\zeta}} | / n$.

\begin{theorem}
\label{thm:wc-dupa}
For every $d \geq 2$ and $l, p \geq 1$, the approximation ratio Dupa{\v{c}}ov{\'a} et al.'s algorithm is unbounded.
\end{theorem}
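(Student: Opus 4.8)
\emph{of Theorem~\ref{thm:wc-dupa}} $\,$

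The plan is to exhibit, for every fixed $d \geq 2$, $l \geq 1$ and $p \geq 1$, a one-parameter family of empirical distributions on which the ratio between the value returned by Dupa{\v{c}}ov{\'a} et al.'s algorithm and the true optimum $D_l(\hat{\mathbb{P}}_n, m)$ diverges to $+\infty$. Since the approximation ratio is defined as the maximum over \emph{all} $n$ and $m$, it suffices to work with $m = 2$, where the pathology already appears: the algorithm's first pick is irrevocable and can be ``poisoned.'' The idea is to place the atoms in three tight, well-separated clusters along the first coordinate axis---a light central cluster $M$ of $n_M$ atoms near $\bm{0}$, and two heavy clusters $A$ and $B$ of $n_A$ atoms each near $-D\mathbf{e}_1$ and $+D\mathbf{e}_1$, respectively---with $n_A \gg n_M$, a large separation $D$, and a tiny within-cluster radius $\epsilon$ used to render all $n = n_M + 2n_A$ atoms distinct (as required for $\hat{\mathbb{P}}_n \in \mathcal{P}_\mathrm{E}(\mathbb{R}^d, n)$). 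Taking $n_M$ fixed (e.g. $n_M = 1$) and letting $n_A \to \infty$ will produce the divergence.

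The key computations, all routed through Theorem~\ref{thm:partition-red}, are three. First, the algorithm's initial pick is the discrete $1$-medoid, i.e. $\bm\zeta \in \arg\min_{\bm\zeta} D_l(\hat{\mathbb{P}}_n, \{\bm\zeta\})$; by the left--right symmetry of the configuration, a central atom in $M$ beats any side atom for \emph{all} cluster sizes and all $l \geq 1$, since placing the center at $\bm{0}$ costs $\approx 2 n_A D^l$ whereas placing it in $A$ costs $\approx n_M D^l + n_A (2D)^l \geq 2 n_A D^l$. Hence greedy commits its first slot to $M$. Second, conditional on this first atom, the best second atom can only neutralize one of the two heavy clusters, leaving the other assigned to the near-origin atom at distance $\approx D$; thus the algorithm returns $\overline{D}_l(\hat{\mathbb{P}}_n, 2) \approx (n_A D^l / n)^{1/l}$. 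Third, the genuine optimum for $m = 2$ selects one representative from each heavy cluster, leaving only the light central cluster uncovered, so $D_l(\hat{\mathbb{P}}_n, 2) \approx (n_M D^l / n)^{1/l}$. The ratio is therefore $\approx (n_A / n_M)^{1/l} \to \infty$.

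The dependence on $p$ and $d$ is handled at no cost: because all atoms are collinear along a coordinate axis, $\|\bm\xi_i - \bm\zeta\|_p$ equals the absolute difference of the relevant coordinates for every $p \geq 1$, so the instance is genuinely one-dimensional and embeds isometrically into any $\mathbb{R}^d$ with $d \geq 2$. The main obstacle---and the only real work---is turning the ``$\approx$'' estimates into rigorous inequalities: one must choose $\epsilon$ small relative to $D$, and $D$ and $n_A$ in the right ranges, so that (i)~the $1$-medoid provably lies in $M$, (ii)~the optimal pair provably covers $A$ and $B$, and (iii)~greedy's second choice provably abandons one heavy cluster, all while the within-cluster perturbations perturb each objective by at most $O(\epsilon)$. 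These are routine but must be stated explicitly to control the distinctness perturbations. If desired, the construction extends verbatim to any $m \in \{2, \ldots, n-1\}$ by placing $m-1$ mutually distant translated copies of the three-cluster gadget along $\mathbf{e}_1$, exactly as in the proofs of Propositions~\ref{prop:red2-sqrt(2)} and~\ref{prop:kap1:below}, so that both the greedy and the optimal partitions decouple across copies and the per-copy ratio is preserved.
\qed
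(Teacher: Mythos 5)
Your proposal is correct and follows essentially the same strategy as the paper's proof: you bait the greedy algorithm into spending its irrevocable first pick on the central atom (which provably minimizes the one-point Wasserstein distance), after which, by pigeonhole on the remaining picks, it must abandon a heavy cluster at distance $\approx D$, while the optimum covers the heavy clusters and abandons only the light central mass, giving a ratio $\approx (n_A/n_M)^{1/l}\to\infty$ under the same limiting regime as the paper (your requirement that $\epsilon$ shrink relative to $D$ and $n_A$ mirrors the paper's condition $z\,\epsilon^l \to 0$ as $z\to\infty$). The differences are only in the gadget---you take $m=2$ with three collinear clusters of unequal weights, which renders the $p$-norm dependence trivial, whereas the paper takes $m=4$ with four equal clusters at $\pm\mathbf{e}_1,\pm\mathbf{e}_2$ plus a single central atom and controls the $p$-norm via explicit triangle-inequality estimates---and your closing remark about extending to general $m$ is unnecessary (and would need more care for a sequential greedy than for the optimum-vs-optimum decoupling in Propositions~\ref{prop:red2-sqrt(2)} and~\ref{prop:kap1:below}), since the approximation ratio is defined as a maximum over all $n$ and $m$.
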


\begin{proof} 
The proof constructs a specific distribution $\hat{\mathbb{P}}_n$ (Step~1), bounds $D_l (\hat{\mathbb{P}}_n, m)$ from above (Step~2) and bounds the Wasserstein distance between $\hat{\mathbb{P}}_n$ and the output $\mathbb{Q}$ of Dupa{\v{c}}ov{\'a} et al.'s algorithm from below (Step~3).

\paragraph{Step 1:}
Fix $d \geq 2$, $l, p \geq 1$ and $m = 4$, and consider the empirical distribution $\hat{\mathbb{P}}_n \in \mathcal{P}_\mathrm{E} (\mathbb{R}^d, n)$ with $n = 4z+1$ for some positive integer $z$ as well as $\text{supp} (\hat{\mathbb{P}}_n) = \mathrm{\Xi}_1 \cup \cdots \cup \mathrm{\Xi}_4 \cup \{ \bm{\xi}_{4z+1} \}$ with $\mathrm{\Xi}_j = \{ \bm{\xi}_{(j-1)z+1}, \ldots, \bm{\xi}_{jz} \}$, $j = 1, \ldots, 4$, and
\begin{equation*}
\mathrm{\Xi}_1 \subset \mathcal{B}_\epsilon (+\mathbf{e}_1), \quad
\mathrm{\Xi}_2 \subset \mathcal{B}_\epsilon (-\mathbf{e}_1), \quad
\mathrm{\Xi}_3 \subset \mathcal{B}_\epsilon (+\mathbf{e}_2), \quad
\mathrm{\Xi}_4 \subset \mathcal{B}_\epsilon (-\mathbf{e}_2)
\end{equation*}
and $\bm{\xi}_{4z+1} = \bm{0}$, where $\mathcal{B}_\epsilon (\bm{x}) = \{ \bm{\xi} \in \mathbb{R}^d \, : \, \Vert \bm{\xi} - \bm{x} \Vert_p \leq \epsilon \}$ denotes the $\epsilon$-ball around $\bm{x}$. Here, $\epsilon > 0$ is small enough so that each atom in $\mathrm{\Xi}_i$ is closer to $\bm{0}$ than to any atom in any of the other sets $\mathrm{\Xi}_j$. The triangle inequality then implies that
\begin{equation}\label{eq:triangle_dupacova}
\begin{array}{l@{\qquad}l}
\displaystyle \Vert \bm{\xi}_i \Vert_p \in [1 - \epsilon, 1 + \epsilon] & \displaystyle \forall \bm{\xi}_i \in \mathrm{\Xi}_1, \\[1mm]
\displaystyle \Vert \bm{\xi}_i - \bm{\xi}_1 \Vert_p \geq 2 - 2 \epsilon & \displaystyle \forall \bm{\xi}_i \in \mathrm{\Xi}_2, \\[1mm]
\displaystyle \Vert \bm{\xi}_i - \bm{\xi}_1 \Vert_p \geq 1 - \epsilon & \displaystyle \forall \bm{\xi}_i \in \mathrm{\Xi}_3 \cup \mathrm{\Xi}_4.
\end{array}
\end{equation}

\paragraph{Step 2:}
By construction, we have that
\begin{align*}
D_l (\hat{\mathbb{P}}_n, 4)
\; & \leq \;
d_l \left( \hat{\mathbb{P}}_n, \; \frac{z+1}{4z+1} \delta_{\bm{\xi}_z} + \frac{z}{4z+1} \delta_{\bm{\xi}_{2z}} + \frac{z}{4z+1} \delta_{\bm{\xi}_{3z}} + \frac{z}{4z+1} \delta_{\bm{\xi}_{4z}} \right) \\
&\leq \;
         \left[ \frac{1}{4z+1} \left( \left[ \sum_{j=1}^4 \sum_{\bm{\xi}_i \in \mathrm{\Xi}_j}
        	\Vert \bm\xi_i - \bm\xi_{jz} \Vert_p^l \right]
          + \Vert \bm\xi_{4z+1} - \bm\xi_z \Vert_p^l \right)\right]^{1/l} \\[2mm]
        &\leq \;
         \left[ \frac{1}{4z+1} \left( \left[ \sum_{j=1}^4 \sum_{\bm{\xi}_i \in \mathrm{\Xi}_j} (2\epsilon)^l \right]
          + (1 + \epsilon)^l \right) \right]^{1/l} \!\!\!\! = \
         \left[ \frac{4z 2^l \epsilon^l + (1+\epsilon)^l}{4z+1} \right]^{1/l},
\end{align*}
where the first inequality holds because $\bm{\xi}_z, \bm{\xi}_{2z}, \bm{\xi}_{3z}, \bm{\xi}_{4z} \in \text{supp}(\hat{\mathbb{P}}_n)$, the second inequality holds since moving the atoms in $\mathrm{\Xi}_j$ to $\bm{\xi}_{jz}$, $j = 1, \ldots, 4$, and $\bm{\xi}_{4z+1}$ to $\bm{\xi}_z$ represents a feasible transportation plan, and the third inequality is due to~\eqref{eq:triangle_dupacova} and the triangle inequality.
    
\paragraph{Step 3:}
We first show that for a sufficiently small $\epsilon > 0$, Dupa{\v{c}}ov{\'a} et al.'s algorithm adds $\bm\xi_{4z+1} = \bm{0}$ to the reduced set $R$ in the first iteration. We then show that under this selection, the output $\mathbb{Q}$ of Dupa{\v{c}}ov{\'a} et al.'s algorithm can be arbitrarily worse than the bound on $D_l (\hat{\mathbb{P}}_n, 4)$ determined in the previous step.

To show the first point, the symmetry inherent in $\text{supp}(\hat{\mathbb{P}}_n)$ implies that it suffices to show that $d_l (\hat{\mathbb{P}}_n, \delta_{\bm{0}}) < d_l(\hat{\mathbb{P}}_n, \delta_{\bm{\xi}_{1}})$. To this end, we note that
    \begin{equation*}
    \begin{aligned}
    	d^l_l (\hat{\mathbb{P}}_n, \delta_{\bm{0}}) \; = \;
        	\frac{1}{4z+1} \sum_{j=1}^4 \sum_{\bm{\xi}_i \in \mathrm{\Xi}_j} \Vert \bm\xi_i \Vert_p^l \; \leq \;
            \frac{1}{4z+1} \sum_{j=1}^4 \sum_{\bm{\xi}_i \in \mathrm{\Xi}_j} (1 + \epsilon)^l \; = \; 
            \frac{4z}{4z+1}(1 + \epsilon)^l
    \end{aligned}
    \end{equation*}
due to equation~\eqref{eq:triangle_dupacova}, while at the same time
	\begin{equation*}
    \begin{aligned}
    	d^l_l (\hat{\mathbb{P}}_n, \delta_{\bm\xi_1}) \; &= \;
        	\frac{1}{4z+1} \sum_{i=2}^{4z+1} \Vert \bm\xi_i - \bm\xi_1 \Vert_p^l \\
            &\geq \;
            \frac{1}{4z+1} \sum_{i=z+1}^{4z+1} \Vert \bm\xi_i - \bm\xi_1 \Vert_p^l \; \geq \;
            \frac{z(2-2\epsilon)^l + (2z+1)(1 - \epsilon)^l}{4z+1}.
    \end{aligned}
    \end{equation*}
    As $\epsilon$ tends to $0$, we have that
    \begin{equation*}
    	\lim_{\epsilon \rightarrow 0}\, d_l (\hat{\mathbb{P}}_n, \delta_{\bm{0}}) \; \leq \;
        \left[ \frac{4z}{4z+1} \right]^{1/l} \; < \; 
        \left[ \frac{z2^l  + 2z + 1}{4z+1} \right]^{1/l} \; \leq \;
        \lim_{\epsilon \rightarrow 0}\, d_l (\hat{\mathbb{P}}_n, \delta_{{\bm\xi}_1}),
    \end{equation*}
    where the strict inequality is due to $l \geq 1$. As a consequence, we may conclude that there indeed exists an $\epsilon > 0$ such that Dupa{\v{c}}ov{\'a} et al.'s algorithm adds $\bm\xi_{4z+1} = \bm{0}$ to the reduced set $R$ in the first iteration.
    
As for the second point, we note that after adding $\bm\xi_{4z+1} = \bm{0}$ to the reduced set $R$, there must be at least one subset $\mathrm{\Xi}_j$, $j \in \{ 1, \ldots, 4 \}$, such that no $\bm{\xi}_i \in \mathrm{\Xi}_j$ is contained in the final reduced set $R$. Assume w.l.o.g.~that this is the case for $j = 1$. We then have
    \begin{equation*}
    	d_l(\hat{\mathbb{P}}_n, \mathbb{Q}) \; \geq \; \left[ \frac{1}{4z+1} \sum_{\bm{\xi}_i \in \mathrm{\Xi}_1} \Vert \bm\xi_j - \bm{0} \Vert_p \right]^{1/l} \; \geq \quad \left[ \frac{z(1 - \epsilon)}{4z+1} \right]^{1/l},
    \end{equation*}
and combining this with the result of Step~2, we can conclude that the approximation ratio $d_l(\hat{\mathbb{P}}_n, \mathbb{Q})/D_l(\hat{\mathbb{P}}_n,4)$ approaches $\infty$ as $z \to \infty$ and $z \epsilon^l \to 0$.
\qed
\end{proof}

We remark that the algorithm of Dupa{\v{c}}ov{\'a} et al.~can be improved by adding multiple atoms to the reduced set $R$ in Step~2. Nevertheless, a similar argument as in the proof of Theorem~\ref{thm:wc-dupa} shows that the resulting improved algorithm does not allow for a finite upper bound on the approximation ratio either.

\subsubsection{$k$-Means Clustering Algorithm}\label{sec:lloyd}

The $k$-means clustering algorithm has first been proposed in 1957 for a pulse-code modulation problem~\citep{Lloyd1982}, and it has since then become a widely used heuristic for various classes of clustering problems. It aims to partition a set of observations $\bm{x}_1, \ldots, \bm{x}_n$ into $m$ clusters $S_1, \ldots, S_m$ such that the intra-cluster sums of squared distances are minimized. By generalizing the algorithm to arbitrary powers and norms, we can adapt the algorithm to our continuous scenario reduction problem as follows.


\begin{center}
\fbox{\begin{minipage}{33.0em}
\centering \underline{\textsc{$k$-means clustering algorithm for  $C_l (\hat{\mathbb{P}}_n, m)$:}} \\[-2mm]
\begin{enumerate}
	\item
	Initialize the reduced set $R = \{ \bm\zeta_1, \hdots, \bm\zeta_m \} \subseteq \text{supp} (\hat{\mathbb{P}}_n)$ arbitrarily. \\[-2mm]
	\item
Let $\{ I_j \} \in \mathfrak{P} (I, m)$ be any partition whose sets $I_j$, $j \in J$, contain all atoms of $\text{supp}(\hat{\mathbb P}_n)$ that are closest to $\bm{\zeta}_j$ (ties may be broken arbitrarily).
	\item
	For each $j \in J$, update $\bm\zeta_j \leftarrow \argmin \{ \sum_{i \in I_j} \Vert \bm\xi_i - \bm\zeta \Vert^l \, : \, \bm{\zeta} \in \mathbb{R}^d \}$. \\[-2mm]
	\item
	Repeat Steps 2 and 3 until the reduced set $R$ no longer changes.
\end{enumerate} 
\end{minipage}}
\end{center}

For the empirical distribution $\hat{\mathbb{P}}_n \in \mathcal{P}_\mathrm{E} (\mathbb{R}^d, n)$, the algorithm iteratively updates the reduced set $R$ containing the atoms of the reduced distribution~$\mathbb{Q}$ through a sequence of assignment (Step~2) and update (Step~3) steps. Step~2 assigns each atom $\bm{\xi}_i \in \text{supp}(\hat{\mathbb P}_n)$ of the empirical distribution to the closest atom in the reduced set, and Step~3 updates each atom in the reduced set so as to minimize the sum of $l$-th powers of the distances to its assigned atoms from $\text{supp}(\hat{\mathbb P}_n)$. After termination, the continuously reduced distribution $\mathbb{Q}$ can be recovered from the reduced set $R$ in the same way as in the previous subsection.

Remark~\ref{rem:center} implies that for $l = 2$ and $\Vert \cdot \Vert = \Vert \cdot \Vert_2$, Step~3 reduces to $\bm\zeta_j \leftarrow \frac{1}{\vert I_j \vert}\sum_{i \in I_j} \bm\xi_i$, in which case we recover the classical $k$-means clustering algorithm. Although the algorithm terminates at a local minimum, \cite{Dasgupta2008} has shown that for $l = 2$ and $\Vert \cdot \Vert = \Vert \cdot \Vert_2$, the solution determined by the algorithm can be arbitrarily suboptimal. We now generalize this finding to generic type-$l$ Wasserstein distances induced by arbitrary $p$-norms.


\begin{theorem}
\label{thm:wc-lloyd}
If initialized randomly in Step~1, the approximation ratio of the $k$-means clustering algorithm is unbounded for every $d, l, p \geq 1$ with significant probability.
\end{theorem}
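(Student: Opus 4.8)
The plan is to mimic the explicit ``bad instance'' strategy of Theorem~\ref{thm:wc-dupa}, but engineered so that Lloyd's reassignment dynamics get \emph{pinned} at a strictly suboptimal fixed point with constant probability over the random initialization. I fix $m = 3$ (exhibiting one bad family suffices, since the approximation ratio is a supremum over all $n$ and $m$) and build $\hat{\mathbb{P}}_n \in \mathcal{P}_\mathrm{E}(\mathbb{R}^d, n)$ with $n = 4z$ atoms grouped into four equal, tight $\epsilon$-balls: a \emph{far reflection pair} $\mathrm{\Xi}_1 \subset \mathcal{B}_\epsilon(+M\mathbf{e}_1)$ and $\mathrm{\Xi}_2 \subset \mathcal{B}_\epsilon(-M\mathbf{e}_1)$, together with a \emph{nearby splittable pair} $\mathrm{\Xi}_3 \subset \mathcal{B}_\epsilon(\bm{c} + r\mathbf{e}_2)$ and $\mathrm{\Xi}_4 \subset \mathcal{B}_\epsilon(\bm{c} - r\mathbf{e}_2)$ centered at some $\bm{c}$ with $\Vert \bm{c} \Vert_p$ much larger than $M$, under the scale separation $\epsilon \ll r \ll M \ll \Vert \bm{c}\Vert_p$. (For $d = 1$ one places $\mathrm{\Xi}_3, \mathrm{\Xi}_4$ as a close pair far out on the line; the embedding into any $d \geq 1$ is immediate.) Each $\mathrm{\Xi}_i$ carries $z$ atoms, so there are four tight balls competing for only three centers, which forces any reduction to merge exactly one pair.

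For the optimal value, Theorem~\ref{thm:partition} and Remark~\ref{rem:center} give an upper bound by serving $\mathrm{\Xi}_1$ and $\mathrm{\Xi}_2$ by their own centroids and merging the cheap pair $\mathrm{\Xi}_3 \cup \mathrm{\Xi}_4$ into its midpoint; since the merged atoms lie within distance $r + \epsilon$ of that midpoint, a short calculation yields $C_l(\hat{\mathbb{P}}_n, 3) \lesssim r$, dominated by the small separation $r$.

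The core of the argument is the probabilistic and dynamical analysis. A random Step~1 initialization draws three distinct atoms uniformly; letting $E$ denote the event that \emph{exactly one} of them lies in $\mathrm{\Xi}_1 \cup \mathrm{\Xi}_2$, a hypergeometric count gives $\Pr(E) \to \binom{3}{1}(\tfrac12)(\tfrac12)^2 = \tfrac38$ as $z \to \infty$, a constant bounded away from zero. I then argue that, conditional on $E$, the two centers inside the splittable pair remain \emph{pinned}: every atom of $\mathrm{\Xi}_1 \cup \mathrm{\Xi}_2$ is vastly closer to the single $B$-center than to any center near $\bm{c}$, so the $S$-centers only ever see $\mathrm{\Xi}_3 \cup \mathrm{\Xi}_4$ and settle there with cost $O(r)$. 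The lone $B$-center must therefore cover \emph{both} far balls; by the reflection symmetry $\bm{x} \mapsto -\bm{x}$ swapping $\mathrm{\Xi}_1 \leftrightarrow \mathrm{\Xi}_2$, the update objective $\sum_{i}\Vert \bm\xi_i - \bm\zeta\Vert_p^l$ is convex and symmetric, so its minimizer (mean for $l=2$, generalized median otherwise) is the symmetric point $\bm{0}$ for every $l, p \geq 1$. Step~3 thus drives this center to $\bm{0}$, where it stays the nearest center to all $2z$ atoms of $\mathrm{\Xi}_1 \cup \mathrm{\Xi}_2$, each at distance $\gtrsim M$. Consequently the algorithm terminates with the far pair merged, giving $d_l(\hat{\mathbb{P}}_n, \mathbb{Q}) \gtrsim M$ and an approximation ratio $\gtrsim M/r \to \infty$.

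The main obstacle is the dynamical step, not the probability bound: one must verify rigorously that, along the \emph{entire} Lloyd trajectory started from $E$, (i) the $B$-center remains the unique nearest center to all far atoms at every iteration, so the assignment never changes once both far balls are captured; (ii) the two $S$-centers are never dragged out toward $\mathrm{\Xi}_1 \cup \mathrm{\Xi}_2$, which follows from the mass of nearby $S$-atoms dominating their update against the far, never-assigned $B$-atoms; and (iii) a single center cannot split two far-apart balls, so no local descent step escapes. Establishing (i)--(iii) amounts to a careful separation-of-scales estimate showing the three scales $\epsilon, r, M, \Vert\bm{c}\Vert_p$ can be chosen so the four regions evolve essentially independently, with the convexity-plus-symmetry argument of the update being the one place where generality over all $l$ and $p$ must be handled uniformly.
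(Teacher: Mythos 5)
Your proposal is correct in substance and follows the same proof skeleton as the paper -- an explicit bad instance, a constant-probability bad initialization, and a scale-separation argument showing that Lloyd's dynamics get pinned at a grossly suboptimal fixed point -- but with a genuinely different gadget. The paper uses three groups with $n=3z+1$: a heavy cluster of $2z$ atoms in $\mathcal{B}_\epsilon(-\mathbf{e}_1)$, a cluster of $z$ atoms in $\mathcal{B}_\epsilon(\bm{0})$, and a \emph{singleton} at $\mathbf{e}_1$. With probability tending to $4/9$ the initialization places two centers in the heavy cluster and one in the middle cluster; the singleton is then absorbed by the middle center, which must straddle $\mathcal{B}_\epsilon(\bm{0})$ and $\mathbf{e}_1$, so the algorithm's cost stays $\gtrsim 1/2$ by the triangle inequality while $C_l(\hat{\mathbb{P}}_n,3)\lesssim\epsilon$; unboundedness comes from $\epsilon\to 0$. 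Your construction instead uses four equal clusters and makes the expensive mistake the merging of the two \emph{far} clusters at $\pm M\mathbf{e}_1$, with unboundedness coming from the two-scale ratio $M/r\to\infty$ at fixed probability $3/8$. What the paper's gadget buys is economy: a single free parameter $\epsilon$, a fixed point reached essentially after one update, and no interaction between multiple centers inside any region. What yours buys is the cleaner classical picture (badly initialized $k$-means merges two distant clusters) and a ratio that diverges without shrinking the optimal cost; the price is that you must also control the dynamics of the two centers shuffling atoms within $\mathrm{\Xi}_3\cup\mathrm{\Xi}_4$, though, as you note, the lower bound never depends on how that sub-dynamics resolves.

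One detail to repair: your claim that, by convexity and reflection symmetry, the Step~3 minimizer for the center serving $\mathrm{\Xi}_1\cup\mathrm{\Xi}_2$ \emph{is} the point $\bm{0}$ for every $l,p\geq 1$ is false for $l=1$, where the minimizer is not unique (for equal clusters at $\pm M\mathbf{e}_1$ every point on the connecting segment minimizes the sum of distances), and the algorithm may break ties arbitrarily. This claim is also unnecessary: the separation of scales alone shows that \emph{any} minimizer lies within $O(M)$ of the far atoms (its objective value is at most that attained at an atom, which caps its distance to the cloud), hence remains far closer to $\mathrm{\Xi}_1\cup\mathrm{\Xi}_2$ than the centers near $\bm{c}$ whenever $\Vert\bm{c}\Vert_p\gg M$, and the pairing/triangle-inequality bound $\sum_{i}\min_j\Vert\bm{\xi}_i-\bm{\zeta}_j\Vert^l\geq z(M-\epsilon)^l$ over the far atoms holds wherever that center sits. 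Phrased this way, your items (i)--(iii) all reduce to the same one-line scale estimate, which matches the (similarly informal) level of rigor of the paper's own Step~3.
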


\begin{proof}
In analogy to the proof of Theorem~\ref{thm:wc-dupa}, we construct a specific distribution $\hat{\mathbb{P}}_n$ (Step~1), bound $D_l (\hat{\mathbb{P}}_n, m)$ from above (Step~2) and bound the Wasserstein distance between $\hat{\mathbb{P}}_n$ and the output $\mathbb{Q}$ of the $k$-means algorithm from below (Step~3).
    
\paragraph{Step 1:}
Fix $d, l, p \geq 1$ and m = 3, and consider the empirical distribution $\hat{\mathbb{P}}_n \in \mathcal{P}_\mathrm{E} (\mathbb{R}^d, m)$ with $n = 3 z + 1$ for some positive integer $z$ as well as $\text{supp} (\hat{\mathbb{P}}_n) = \mathrm{\Xi}_1 \cup \mathrm{\Xi}_2 \cup \{ \bm{\xi}_{3z+1} \}$ with $\mathrm{\Xi}_1 = \{ \bm{\xi}_1, \ldots, \bm{\xi}_{2z} \}$, $\mathrm{\Xi}_2 = \{ \bm{\xi}_{2z+1}, \ldots, \bm{\xi}_{3z} \}$ and
\begin{equation}\label{eq:lloyd_example}
\mathrm{\Xi}_1 \subset \mathcal{B}_\epsilon (-\mathbf{e}_1), \quad
\mathrm{\Xi}_2 \subset \mathcal{B}_\epsilon (\bm{0}) \quad \text{for} \quad \epsilon \in (0, 1/4),
\end{equation}
as well as $\bm{\xi}_{3z+1} = \mathbf{e}_1$, where again $\mathcal{B}_\epsilon (\bm{x}) = \{ \bm{\xi} \in \mathbb{R}^d \, : \, \Vert \bm{\xi} - \bm{x} \Vert_p \leq \epsilon \}$. By construction, the distance between any pair of atoms in $\mathrm{\Xi}_j$ is bounded above by $2 \epsilon < \frac{1}{2}$, $j = 1, 2$, whereas the distance between two atoms from $\mathrm{\Xi}_1$ and $\mathrm{\Xi}_2$ is bounded from below by $1 - 2 \epsilon > \frac{1}{2}$.
    
\paragraph{Step 2:}
As similar argument as in the proof of Theorem~\ref{thm:wc-dupa} shows that
\begin{align*}
C_l (\hat{\mathbb{P}}_n, 3)
\; & \leq \; 
d_l \left( \hat{\mathbb{P}}_n, \frac{2k}{3k+1} \delta_{-\mathbf{e}_1} + \frac{k}{3k+1} \delta_{\bm{0}} + \frac{1}{3k+1} \delta_{\mathbf{e}_1} \right) \\
& \leq \;
\left[ \frac{1}{3z+1} \left( \sum_{\bm{\xi}_i \in \mathrm{\Xi}_1} \Vert \bm\xi_i + \mathbf{e}_1 \Vert_p^l + \sum_{\bm{\xi}_i \in \mathrm{\Xi}_2} \Vert \bm\xi_i \Vert_p^l \right) \right]^{1/l} \; \leq \; \left[ \frac{3z \epsilon^l} {3z+1} \right]^{1/l},
\end{align*}
where the last inequality follows from~\eqref{eq:lloyd_example}.
    
\paragraph{Step 3:}
We first show that with significant probability, the algorithm chooses a reduced set $R$ containing two atoms from $\mathrm{\Xi}_1$ and one atom from $\mathrm{\Xi}_2$ in the first step. We then show that under this initialization, the output $\mathbb{Q}$ of the algorithm can be arbitrarily worse than the bound on $C_l (\hat{\mathbb{P}}_n, 3)$ determined above. 

In view of the first point, we note that the probability of the reduced set $R$ containing two atoms from $\mathrm{\Xi}_1$ and one atom from $\mathrm{\Xi}_2$ after the first step is $\binom{2z}{2}\binom{z}{1}/\binom{3z+1}{3}$ and approaches 44.44\% as $z \to \infty$. In the following, we thus assume w.l.o.g.~that $R = \{ \bm{\zeta}_1, \bm{\zeta}_2, \bm{\zeta}_3 \}$ with $\bm\zeta_1, \bm\zeta_2 \in \mathrm{\Xi}_1$ and $\bm{\zeta}_3 \in \mathrm{\Xi}_2$ after the first step.

As for the second point, we note that Step~2 of the algorithm assigns the atoms $\bm{\xi}_i \in \mathrm{\Xi}_1$ to either $\bm{\zeta}_1$ or $\bm{\zeta}_2$, whereas the atoms $\bm{\xi}_i \in \mathrm{\Xi}_2 \cup \{ \bm{\xi}_{3z+1} \}$ are assigned to $\bm{\zeta}_3$. Hence, the update of the reduced set $R$ in the next iteration satisfies $\bm{\zeta}_1, \bm{\zeta}_2 \in \mathcal{B}_\epsilon (-\mathbf{e}_1)$, whereas $\bm{\zeta}_3$ is chosen with respect to the set $\mathrm{\Xi}_2 \cup \{ \bm{\xi}_{3z+1} \}$. The algorithm then terminates in the third iteration as the reduced set $R$ no longer changes. We thus find that
    \begin{equation*}
    \begin{aligned}
    	d_l(\hat{\mathbb{P}}_n, \mathbb{Q}) 
        \; &\geq \;
        \left[ \frac{1}{3z+1} \sum_{\bm{\xi}_i \not \in \mathrm{\Xi}_1} \Vert \bm\xi_i - \bm\zeta_3 \Vert_p^l \right]^{1/l} \\
        \; &\geq \;
        \left[ \frac{1}{3z+1} \Big( \Vert \bm\xi_{3z} - \bm\zeta_3 \Vert_p^l + \Vert \bm\xi_{3z+1} - \bm\zeta_3 \Vert_p^l \Big) \right]^{1/l}.
    \end{aligned}
    \end{equation*}
    Recall that $\bm\xi_{3z} \in \mathcal{B}_\epsilon (\bm{0})$ and $\bm\xi_{3z+1} = \mathbf{e}_1$, which implies that
    \begin{equation*}
    	\Vert \bm\xi_{3z} - \bm{\zeta}_3 \Vert_p + \Vert \bm\xi_{3z+1} - \bm{\zeta}_3 \Vert_p 
        \; \geq \;
        \Vert \bm\xi_{3z+1} - \bm{\xi}_{3z} \Vert_p 
        \; \geq \; 
        1 - \epsilon,
    \end{equation*}
    and that at least one of the two terms $\Vert \bm\xi_{3z} - \bm{\zeta}_3 \Vert_p$ or $\Vert \bm\xi_{3z+1} - \bm{\zeta}_3 \Vert_p$ is greater than~$\frac{1-\epsilon}{2}$. We thus conclude that
    \begin{equation*}
    	d_l(\hat{\mathbb{P}}_n, \mathbb{Q}) 
        \; \geq \;
        \left[ \frac{(1- \epsilon)^l}{2^l (3z+1)} \right]^{1/l},
    \end{equation*}
which by virtue Step~2 implies that $d_l(\hat{\mathbb{P}}_n,\mathbb{Q})/C_l(\hat{\mathbb{P}}_n,3) \rightarrow \infty$ as $\epsilon \to 0$.
\qed
\end{proof}


\subsection{Constant-Factor Approximation for the Scenario Reduction Problem}\label{sec:constant_fac}

We now propose a simple approximation scheme for the discrete scenario reduction problem under the type-$1$ Wasserstein distance whose approximation ratio is bounded from above by $5$. We also show that this algorithm gives rise to an approximation scheme for the \emph{continuous} scenario reduction problem with an approximation ratio of $10$. To our best knowledge, we describe the first constant-factor approximations for the discrete and continuous scenario reduction problems.

Our algorithm follows from the insight that the discrete scenario reduction problem under the type-$1$ Wasserstein distance is equivalent to the $k$-median clustering problem. The $k$-median clustering problem is a variant of the $k$-means clustering problem described in Section~\ref{sec:lloyd}, where the $l$-th power of the norm terms is dropped (\textit{i.e.}, $l=1$). In the following, we adapt a well-known local search algorithm \citep{Arya2004} to our discrete scenario reduction problem:

\begin{center}
\fbox{\begin{minipage}{34em} 
\centering \underline{\textsc{Local search algorithm for $D_l (\hat{\mathbb{P}}_n, m)$:}} \\[-2mm]
\begin{enumerate}
	\item
	Initialize the reduced set $R \subseteq \text{supp} (\hat{\mathbb{P}}_n)$, $|R| = m$, arbitrarily. \\[-2mm]
	\item
	Select the next exchange to be applied to the reduced set as
	\begin{equation*}
	(\bm\zeta, \bm{\zeta}^\prime) \in \argmin \left\{ D_l (\hat{\mathbb{P}}_n, R \cup \{ \bm{\zeta} \} \setminus \{ \bm{\zeta}^\prime \}) \, : \, (\bm{\zeta}, \bm{\zeta}^\prime) \in \big( \text{supp} (\hat{\mathbb{P}}_n) \setminus R \big) \times R \right\},
	\end{equation*}
	and update $R \leftarrow R \cup \{ \bm{\zeta} \} \setminus \{ \bm{\zeta}^\prime \}$ if $D_l (\hat{\mathbb{P}}_n, R \cup \{ \bm{\zeta} \} \setminus \{ \bm{\zeta}^\prime \}) < D_l (\hat{\mathbb{P}}_n, R)$. \\
	\item
	Repeat Step 2 until no further improvement is possible.
\end{enumerate} 
\end{minipage}}
\end{center}

For an empirical distribution $\hat{\mathbb{P}}_n \in \mathcal{P}_\mathrm{E} (\mathbb{R}^d, n)$, the algorithm constructs a sequence of reduced sets $R$ containing the atoms of the reduced distribution~$\mathbb{Q}$. In each iteration, Step~2 selects the exchange $R \cup \{ \bm{\zeta} \} \setminus \{ \bm{\zeta}^\prime \}$, $\bm{\zeta} \in \text{supp} (\hat{\mathbb{P}}_n)$ and $\bm{\zeta}^\prime \in R$, that maximally reduces the Wasserstein distance $D_l (\hat{\mathbb{P}}_n, R)$. For performance reasons, this `best fit' strategy can also be replaced with a `first fit' strategy which conducts the first exchange $R \cup \{ \bm{\zeta} \} \setminus \{ \bm{\zeta}^\prime \}$ found that leads to a reduction of $D_l (\hat{\mathbb{P}}_n, R)$. After termination, the reduced distribution~$\mathbb{Q}$ can be recovered from the reduced set $R$ in the same way as in Section~\ref{sec:dupacova-and-coworkers}.

It follows from \cite{Arya2004} that the above algorithm (with either `best fit' or `first fit') has an approximation ratio of $5$ for the discrete scenario reduction problem for all $d$. We now show that the algorithm also provides solutions to the \emph{continuous} scenario reduction problem with an approximation ratio of at most $10$.

\begin{corollary}
\label{cor:guarantee}
The problems $D_l (\hat{\mathbb{P}}_n, m)$ and $C_l (\hat{\mathbb{P}}_n, m)$ are related as follows.
\begin{enumerate}
\item Any approximation algorithm for $D_2 (\hat{\mathbb{P}}_n, m)$ under the $2$-norm with approximation ratio $\alpha$ gives rise to an approximation algorithm for $C_2 (\hat{\mathbb{P}}_n, m)$ under the $2$-norm with approximation ratio $\sqrt{2} \alpha$.
\item Any approximation algorithm for $D_1 (\hat{\mathbb{P}}_n, m)$ under any norm with approximation ratio $\alpha$ gives rise to an approximation algorithm for $C_1 (\hat{\mathbb{P}}_n, m)$ under the same norm with approximation ratio $2 \alpha$.
\end{enumerate}
\end{corollary}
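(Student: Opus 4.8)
The plan is to exploit the single structural fact that every feasible solution of the \emph{discrete} scenario reduction problem is automatically feasible for its \emph{continuous} counterpart, and then to compose the assumed discrete approximation guarantee with the worst-case ratios $\overline{\kappa}_l(n,m)$ already bounded in Theorems~\ref{thm:red2-sqrt(2)} and~\ref{thm:red1-2}. Concretely, suppose an algorithm for $D_l(\hat{\mathbb{P}}_n,m)$ returns a reduced set $R \subseteq \text{supp}(\hat{\mathbb{P}}_n)$ with $|R| = m$ and induces a discrete distribution $\mathbb{Q}$ with $d_l(\hat{\mathbb{P}}_n, \mathbb{Q}) = \overline{D}_l(\hat{\mathbb{P}}_n, m) \leq \alpha \cdot D_l(\hat{\mathbb{P}}_n, m)$. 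Since $R \subseteq \mathbb{R}^d$, the \emph{same} distribution $\mathbb{Q}$ lies in $\mathcal{P}(\mathbb{R}^d, m)$ and is therefore feasible for the continuous problem, so we may take $\overline{D}_l(\hat{\mathbb{P}}_n, m)$ as the algorithm's certified upper bound $\overline{C}_l(\hat{\mathbb{P}}_n, m)$ on $C_l(\hat{\mathbb{P}}_n, m)$ without any additional rounding or projection.

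First I would record the elementary chain of inequalities
\[
\overline{C}_l(\hat{\mathbb{P}}_n, m) = \overline{D}_l(\hat{\mathbb{P}}_n, m) \leq \alpha \cdot D_l(\hat{\mathbb{P}}_n, m) \leq \alpha \cdot \overline{\kappa}_l(n,m) \cdot C_l(\hat{\mathbb{P}}_n, m),
\]
where the final inequality is exactly the defining property of $\overline{\kappa}_l(n,m)$ in~\eqref{eq:ratio}. Dividing by $C_l(\hat{\mathbb{P}}_n, m)$ and maximizing over all $n$, $m$, and $\hat{\mathbb{P}}_n \in \mathcal{P}_\mathrm{E}(\mathbb{R}^d, n)$ shows that the continuous approximation ratio of this induced algorithm is bounded above by $\alpha \cdot \sup_{n,m} \overline{\kappa}_l(n,m)$.

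It then remains only to substitute the worst-case values of $\overline{\kappa}_l$. For $l = 2$ under the $2$-norm, Theorem~\ref{thm:red2-sqrt(2)} gives $\overline{\kappa}_2(n,m) \leq \sqrt{2}$ for all $n,m$, which yields the factor $\sqrt{2}\alpha$ claimed in part~1. For $l = 1$ under an arbitrary norm, Theorem~\ref{thm:red1-2} supplies $\overline{\kappa}_1(n,m) \leq 2$ on the interior range $m \in \{2, \ldots, n-2\}$, while the boundary estimates $\overline{\kappa}_1(n,1) \leq 2(1-\tfrac{1}{n}) \leq 2$ and $\overline{\kappa}_1(n,n-1) \leq 1 \leq 2$ confirm that the uniform bound $\overline{\kappa}_1(n,m) \leq 2$ actually holds for every admissible $m$; this delivers the factor $2\alpha$ in part~2.

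I do not anticipate a genuine obstacle here: the statement is in essence a bookkeeping consequence of the $\overline{\kappa}_l$ bounds proved earlier, so no new estimate is required. The only points demanding care are, first, the observation that the discrete optimizer's output carries over verbatim as a continuous feasible point — so that one numerical solution simultaneously certifies both approximation ratios — and, second, the verification that the boundary cases $m \in \{1, n-1\}$ for $l=1$ are subsumed by the uniform bound $\overline{\kappa}_1(n,m) \leq 2$, as noted above, so that part~2 is valid across the full range of $m$.
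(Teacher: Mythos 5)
Your proposal is correct and follows exactly the paper's intended argument: the paper's own proof is the one-line remark that the corollary ``follows directly from Theorems~\ref{thm:red2-sqrt(2)} and~\ref{thm:red1-2},'' and your chain $\overline{D}_l(\hat{\mathbb{P}}_n,m) \leq \alpha \, D_l(\hat{\mathbb{P}}_n,m) \leq \alpha \, \overline{\kappa}_l(n,m) \, C_l(\hat{\mathbb{P}}_n,m)$, together with the observation that the discrete solution is itself feasible for the continuous problem, is precisely the reasoning those citations compress. Your explicit check that the boundary cases $m \in \{1, n-1\}$ for $l=1$ are subsumed by the uniform bound $\overline{\kappa}_1(n,m) \leq 2$ is a detail the paper leaves implicit, but it is trivially true and your proof is complete.
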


\begin{proof} 
The two statements follow directly from Theorems~\ref{thm:red2-sqrt(2)} and~\ref{thm:red1-2}, respectively.
\qed
\end{proof}

As presented, the local search algorithm is not guaranteed to terminate in polynomial time. This can be remedied by a variant of the algorithm that only accepts exchanges $R \cup \{ \bm{\zeta} \} \setminus \{ \bm{\zeta}^\prime \}$ that reduce the Wasserstein distance $D_l (\hat{\mathbb{P}}_n, R)$ by at least $\epsilon / ((n-m)m)$ for some constant $\epsilon > 0$. It follows from \cite{Arya2004} that for any $\epsilon$, this variant terminates in polynomial time and provides a $(5 + \epsilon)$-approximation for the discrete scenario reduction problem. The algorithm can also be extended to accommodate multiple swaps in every iteration, which lowers the approximation ratio to $3 + \epsilon$ at the expense of additional computations.

We remark that there is a wealth of algorithms for the $k$-median problem that can be adapted to the discrete scenario reduction problem. For example, \cite{Charikar2012} present a rounding scheme for the $k$-median problem that gives rise to a polynomial-time algorithm for $D_1 (\hat{\mathbb{P}}_n, R)$ with an approximation ratio of $3.25$. Likewise, \cite{Kanungo2004} propose a local search algorithm for the $k$-median problem that gives rise to a polynomial-time algorithm for $D_2 (\hat{\mathbb{P}}_n, R)$ under the $2$-norm with an approximation ratio of $9 + \epsilon$. In both cases, Corollary~\ref{cor:guarantee} allows us to extend these guarantees to the corresponding versions of the continuous scenario reduction problem.


\subsection{Mixed-Integer Reformulations of the Discrete and Continuous Scenario Reduction Problems}\label{sec:mixed_integer}

We first review a well-known mixed-integer linear programming (MILP) reformulation of the discrete scenario reduction problem $D_l (\hat{\mathbb{P}}_n, m)$:

\begin{theorem}
\label{thm:scered_milp}
The discrete scenario reduction problem can be formulated as the MILP
\begin{equation}
\label{opt:scered_milp}
\raisebox{5mm}{$D^l_l(\hat{\mathbb{P}}_n, m) \; = \;$}
\begin{array}{c@{\quad}l}
\displaystyle \min_{\mathbf{\Pi}, \, \bm{\lambda}} & \displaystyle \frac{1}{n} \left< \mathbf{\Pi}, \mathbf{D} \right> \\[2.5mm]
\displaystyle \text{\emph{s.t.}} & \displaystyle \mathbf{\Pi} \mathbf{e} = \mathbf{e}, \;\; \mathbf{\Pi} \leq \mathbf{e} \bm{\lambda}^\top, \;\; \bm{\lambda}^\top \mathbf{e} = m \\
& \displaystyle \mathbf{\Pi} \in \mathbb{R}^{n \times n}_+, \;\; \bm{\lambda} \in \{ 0, 1\}^n,
\end{array}
\end{equation}
with $\mathbf{D} \in \mathbb{S}^n$ and $d_{ij} = \Vert \bm\xi_i - \bm\xi_j \Vert^l$ encoding the distances among the atoms in~$\text{supp}(\hat{\mathbb{P}}_n)$.
\end{theorem}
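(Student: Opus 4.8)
The plan is to interpret the two blocks of variables separately: $\bm\lambda\in\{0,1\}^n$ selects which of the atoms $\bm\xi_1,\dots,\bm\xi_n$ are retained as support points of the reduced distribution (with $\lambda_j=1$ meaning $\bm\xi_j$ is kept), while $\mathbf\Pi$ is an unnormalized transportation plan in which $\pi_{ij}/n$ is the probability mass shipped from $\bm\xi_i$ to $\bm\xi_j$. Under this reading, $\tfrac1n\langle\mathbf\Pi,\mathbf D\rangle=\tfrac1n\sum_{i,j}\pi_{ij}\|\bm\xi_i-\bm\xi_j\|^l$ is exactly the transportation cost defining $d_l^l$; the row constraint $\mathbf\Pi\mathbf e=\mathbf e$ reproduces the source marginal $\tfrac1n$ of $\hat{\mathbb P}_n$ after the $n$-scaling; the coupling constraint $\mathbf\Pi\le\mathbf e\bm\lambda^\top$ forbids shipping mass to a discarded atom; and $\bm\lambda^\top\mathbf e=m$ caps the support at $m$ points. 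Crucially, there is \emph{no} constraint on the column sums of $\mathbf\Pi$, which mirrors the fact that the probabilities $q_j$ of the reduced distribution are free decision variables. I would establish the claim by projecting out $\mathbf\Pi$ for a fixed feasible $\bm\lambda$ and matching the result against Theorem~\ref{thm:partition-red}.

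For a fixed feasible $\bm\lambda$ I would write $S=\{j:\lambda_j=1\}$, $|S|=m$, and observe that the inner minimization over $\mathbf\Pi$ separates across rows: row $i$ contributes $\min\{\sum_j\pi_{ij}d_{ij}:\sum_j\pi_{ij}=1,\ \pi_{ij}\ge0,\ \pi_{ij}=0\ \forall j\notin S\}=\min_{j\in S}d_{ij}$, attained by shipping all of row $i$'s mass to a nearest retained atom. Summing over $i$ reduces the MILP to $\min_{S:\,|S|=m}\tfrac1n\sum_{i\in I}\min_{j\in S}\|\bm\xi_i-\bm\xi_j\|^l$, which equals $D_l^l(\hat{\mathbb P}_n,m)$ by Theorem~\ref{thm:partition-red}: identifying $S$ with the candidate support $\{\bm\zeta_j\}\subseteq\text{supp}(\hat{\mathbb P}_n)$, the nearest-atom Voronoi identity~\eqref{eq:voronoi-2} transfers verbatim to the discrete setting (originally Theorem~2 of~\citet{Dupacova2003}). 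Finally I would recover the reduced distribution from an optimal $(\mathbf\Pi^\star,\bm\lambda^\star)$ via the free column marginals $q_j=\tfrac1n\sum_i\pi_{ij}^\star$, which are nonnegative, sum to $\tfrac1n\mathbf e^\top\mathbf\Pi^\star\mathbf e=1$, and vanish off $S$, so that $\mathbb Q=\sum_j q_j\delta_{\bm\xi_j}\in\mathcal P(\text{supp}(\hat{\mathbb P}_n),m)$.

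The argument is almost entirely bookkeeping, so I do not expect a single hard step; the points requiring care are exactly those where the formulation differs cosmetically from the underlying transportation problem. First, the right-hand side $\mathbf e$ (rather than $\tfrac1n\mathbf e$) of the row constraint means $\mathbf\Pi$ carries $n$ times the true mass, and this scaling must be tracked consistently through to the objective. Second, replacing the ``at most $m$'' support of $\mathcal P(\text{supp}(\hat{\mathbb P}_n),m)$ by the ``exactly $m$'' constraint $\bm\lambda^\top\mathbf e=m$ is harmless, because enlarging $S$ can only lower $\sum_i\min_{j\in S}\|\bm\xi_i-\bm\xi_j\|^l$; hence the two minima coincide. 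Third, and most importantly, the tightness of the inner linear program---that an optimal plan ships each source atom entirely to its nearest retained atom---relies on the column sums being unconstrained, which is precisely the modeling choice that lets the reduced distribution select its own weights. Once these three observations are in place, the equivalence follows immediately.
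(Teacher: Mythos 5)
Your proof is correct. Note, however, that the paper does not actually prove Theorem~\ref{thm:scered_milp} at all: it treats the MILP reformulation as well known and simply cites \cite{Heitsch2003}. Your argument therefore supplies a self-contained derivation where the paper defers to the literature, and it is a clean one: fixing $\bm\lambda$ and exploiting the row-separability of the inner linear program (whose feasible set, per row, is the simplex restricted to the coordinates selected by $\bm\lambda$, with vertices the unit vectors $\mathbf{e}_j$, $j\in S$) collapses the MILP to the facility-location form $\min_{|S|=m}\tfrac1n\sum_{i\in I}\min_{j\in S}\|\bm\xi_i-\bm\xi_j\|^l$, which matches $D_l^l(\hat{\mathbb P}_n,m)$ via the same Dupa\v{c}ov\'a-type identity the paper uses to prove Theorems~\ref{thm:partition} and~\ref{thm:partition-red}. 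You also correctly flag the three points where the formulation could trip someone up: the $n$-scaling of the row marginals, the harmlessness of ``exactly $m$'' versus ``at most $m$'' (monotonicity of the nearest-atom cost in $S$), and the fact that leaving the column sums of $\mathbf\Pi$ free is exactly what encodes the free weights $q_j$ of the reduced distribution. Your recovery of $\mathbb Q$ from the column sums $q_j=\tfrac1n\sum_i\pi^\star_{ij}$ agrees with the formula $\mathbb{Q}=\tfrac1n\sum_{j}\mathbf e^\top\mathbf\Pi^\star\mathbf e_j\,\delta_{\bm\xi_j}$ stated in the paper's discussion following the theorem, so the proposal is fully consistent with how the paper uses the result.
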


\begin{proof} 
	See {\em e.g.} \cite{Heitsch2003}. \qed
\end{proof}

In problem~\eqref{opt:scered_milp}, the decision variable $\pi_{ij}$ determines how much of the probability mass of atom $\bm{\xi}_i$ in  $\hat{\mathbb{P}}_n$ is shifted to the atom $\bm{\zeta}_j$ in the reduced distribution $\mathbb{Q}$, whereas the decision variable $\lambda_j$ determines whether the atom $\bm\xi_j \in \text{supp}(\hat{\mathbb{P}}_n)$ is contained in the support of $\mathbb{Q}$. A solution $(\mathbf{\Pi}^\star, \bm{\lambda}^\star)$ to problem~\eqref{opt:scered_milp} allows us to recover the reduced distribution via $\mathbb{Q} = \frac{1}{n} \sum_{j=1}^n \mathbf{e}^\top \mathbf{\Pi}^\star \mathbf{e}_j \cdot \delta_{\bm{\xi}_j}$. Problem~\eqref{opt:scered_milp} has $n$ binary and $n^2$ continuous variables as well as $n^2+n+1$ constraints.

We now consider the \emph{continuous} scenario reduction problem. Due to its bilinear objective function, which involves products of transportation weights $\pi_{ij}$ and the distances $\Vert \bm\xi_i - \bm\zeta_j \Vert^l$ containing the continuous decision variables $\bm{\zeta}_j$, this problem may not appear to be amenable to a reformulation as a mixed-integer convex optimization problem. We now show that such a reformulation indeed exists.

\begin{theorem}
\label{thm:sceopt_miconvex}
The continuous scenario reduction problem can be formulated as the mixed-integer convex optimization problem
\begin{equation}
\label{opt:sceopt_miconvex}
\raisebox{7.25mm}{$C^l_l(\hat{\mathbb{P}}_n, m) \; = \;$}
\begin{array}{c@{\quad}l}
\displaystyle \min_{\mathbf{\Pi}, \, \bm{c}, \, \{\bm\zeta_j\}} & \displaystyle \frac{1}{n} \mathbf{e}^\top \bm{c} \\[2.5mm]
\displaystyle \text{\emph{s.t.}} & \displaystyle \mathbf{\Pi} \mathbf{e} = \mathbf{e} \\
& \displaystyle \Vert \bm\xi_i - \bm\zeta_j \Vert^l \leq c_{i} + M (1 - \pi_{ij}) \quad \forall i \in I, \, \forall j \in J \\
& \displaystyle \mathbf{\Pi} \in \{ 0, 1 \}^{n \times m}_+, \;\; \bm{c} \in \mathbb{R}^n_+, \;\; \bm\zeta_1,\hdots,\bm\zeta_m \in \mathbb{R}^d,
\end{array}
\end{equation}
where $M = \max_{i,j \in I} \Vert \bm\xi_i - \bm\xi_j \Vert^l$ denotes the diameter of the support of $\hat{\mathbb{P}}_n$.
\end{theorem}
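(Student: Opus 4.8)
The plan is to start from the Voronoi reformulation \eqref{eq:voronoi-2} established in the proof of Theorem~\ref{thm:partition}, which already isolates the continuous centroids,
\[
C_l^l(\hat{\mathbb P}_n,m) = \min_{\{\bm\zeta_j\}\subseteq\mathbb R^d} \frac1n \sum_{i\in I}\min_{j\in J}\|\bm\xi_i-\bm\zeta_j\|^l,
\]
and to linearize the inner minimum over $j$ by introducing binary assignment variables $\pi_{ij}\in\{0,1\}$ together with epigraphical variables $c_i$ that are meant to capture $\min_{j}\|\bm\xi_i-\bm\zeta_j\|^l$. The constraint $\mathbf\Pi\mathbf e=\mathbf e$ forces each atom $\bm\xi_i$ to be assigned to exactly one centroid $\bm\zeta_{j(i)}$, and the big-$M$ inequalities $\|\bm\xi_i-\bm\zeta_j\|^l\le c_i+M(1-\pi_{ij})$ reduce to $\|\bm\xi_i-\bm\zeta_{j(i)}\|^l\le c_i$ for the assigned index while becoming slack for all other $j$. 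Minimizing $\frac1n\mathbf e^\top\bm c$ then drives each $c_i$ down to the distance to its assigned centroid, thereby reproducing the inner minimum of \eqref{eq:voronoi-2}.

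I would establish the value equality by two inequalities. For the direction ``$\le$'', I take an optimal partition $\{I_j^\star\}$ from Theorem~\ref{thm:partition} with inner minimizers $\bm\zeta_j^\star$ (the mean or geometric median of Remark~\ref{rem:center} when $l\in\{1,2\}$, and a minimizer of the convex map $\bm\zeta\mapsto\sum_{i\in I_j^\star}\|\bm\xi_i-\bm\zeta\|^l$ in general), set $\pi_{ij}=1$ iff $i\in I_j^\star$, and put $c_i=\|\bm\xi_i-\bm\zeta_{j(i)}^\star\|^l$; this point is feasible and attains objective value $C_l^l(\hat{\mathbb P}_n,m)$. For the direction ``$\ge$'', any feasible point of \eqref{opt:sceopt_miconvex} has exactly one $j(i)$ per row, and the corresponding big-$M$ constraint yields $\|\bm\xi_i-\bm\zeta_{j(i)}\|^l\le c_i$, so $\frac1n\sum_i c_i\ge\frac1n\sum_i\min_j\|\bm\xi_i-\bm\zeta_j\|^l\ge C_l^l(\hat{\mathbb P}_n,m)$ by optimality of the centroids in \eqref{eq:voronoi-2}. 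Convexity of \eqref{opt:sceopt_miconvex} then follows because, for fixed binary $\mathbf\Pi$, the left-hand side $\|\bm\xi_i-\bm\zeta_j\|^l$ is convex in $\bm\zeta_j$ (the nondecreasing convex power $t\mapsto t^l$, $l\ge1$, composed with the norm), the right-hand side is affine, and the remaining constraints are linear.

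The step requiring care---the main obstacle---is verifying that the prescribed constant $M=\max_{i,j\in I}\|\bm\xi_i-\bm\xi_j\|^l$ is large enough for the slack inequalities at the non-assigned indices to hold at the candidate solution, i.e.\ that $\|\bm\xi_i-\bm\zeta_j^\star\|^l\le M$ for all $i,j$. This reduces to showing that every cluster centroid $\bm\zeta_j^\star$ may be taken inside $\mathrm{conv}(\mathrm{supp}(\hat{\mathbb P}_n))$: since $\bm w\mapsto\|\bm\xi_i-\bm w\|$ is convex, its maximum over this polytope is attained at a vertex, hence at some atom, so $\|\bm\xi_i-\bm\zeta_j^\star\|\le\max_{i',i''}\|\bm\xi_{i'}-\bm\xi_{i''}\|=M^{1/l}$. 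For the $2$-norm the hull-containment is immediate, as replacing $\bm\zeta_j^\star$ by its Euclidean projection onto $\mathrm{conv}(\mathrm{supp}(\hat{\mathbb P}_n))$ does not increase any distance $\|\bm\xi_i-\bm\zeta_j^\star\|_2$ and therefore leaves the objective of \eqref{eq:voronoi-2} unchanged while forcing the centroid into the hull; because the partitions in $\mathfrak P(I,m)$ have only nonempty cells, every centroid is a genuine cluster minimizer and the bound applies to all of them. I would record this hull-containment property as a short lemma and then assemble the two inequalities together with the convexity observation into the claimed equivalence.
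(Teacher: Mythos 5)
Your core argument---mapping an optimal partition-plus-centroids solution of \eqref{eq:voronoi-1} to a feasible point of \eqref{opt:sceopt_miconvex}, and conversely extracting from any feasible point of \eqref{opt:sceopt_miconvex} the bound $\frac{1}{n}\sum_{i\in I} c_i \geq \frac{1}{n}\sum_{i\in I}\min_{j\in J}\Vert \bm\xi_i - \bm\zeta_j\Vert^l \geq C_l^l(\hat{\mathbb{P}}_n,m)$---is exactly the correspondence used in the paper's proof. Where you differ is that you noticed the step the paper silently skips: the constructed point is feasible only if $\Vert\bm\xi_i-\bm\zeta^\star_j\Vert^l\leq c_i+M$ also holds for the \emph{non-assigned} pairs $(i,j)$, which requires knowing that optimal cluster centroids cannot drift too far from the data. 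Your Euclidean-projection argument settles this cleanly for $\Vert\cdot\Vert=\Vert\cdot\Vert_2$, so in the Euclidean case your proof is complete and strictly more careful than the paper's own.

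The gap is that the formulation is meant to hold for general norms (Proposition~\ref{prop:sceopt_misocp} immediately instantiates \eqref{opt:sceopt_miconvex} with $\Vert\cdot\Vert_1$, $\Vert\cdot\Vert_\infty$ and arbitrary $p$-norms), and there your hull-containment lemma is false: the projection argument is genuinely tied to the $2$-norm. Concretely, for $\Vert\cdot\Vert_1$, $l=1$, $d=3$, the unique minimizer of $\bm\zeta\mapsto\sum_{i=1}^3\Vert\mathbf{e}_i-\bm\zeta\Vert_1$ is $\bm\zeta=\bm{0}$, which lies outside $\mathrm{conv}\{\mathbf{e}_1,\mathbf{e}_2,\mathbf{e}_3\}$. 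Worse, the defect is in the statement itself, not just in the proof: take $n=4$, $m=2$ with atoms $\mathbf{e}_1,\mathbf{e}_2,\mathbf{e}_3$ and $\bm\xi_4=(t,t,t)$ for large $t$, so that $M=3t-1$. The optimal continuous reduction uses clusters $\{\mathbf{e}_1,\mathbf{e}_2,\mathbf{e}_3\}$ (centroid $\bm 0$) and $\{\bm\xi_4\}$, giving $C_1(\hat{\mathbb{P}}_4,2)=3/4$; but in \eqref{opt:sceopt_miconvex} the constraint for the pair $(i,j)=(4,1)$ reads $3t\leq c_4+3t-1$, forcing $c_4\geq 1$ whenever $\bm\zeta_1=\bm 0$, and optimizing over all centroid placements and assignments shows the value of \eqref{opt:sceopt_miconvex} equals $1>3/4$. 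So with the stated constant $M$ the claimed equality fails for the $1$-norm; the obstacle you correctly flagged as ``the main step requiring care'' cannot be overcome by any proof without modifying the theorem. A clean repair is to enlarge the constant: for any norm, an optimal centroid of cluster $I_j$ satisfies $\Vert\bm\xi_{i_0}-\bm\zeta_j^\star\Vert^l\leq\sum_{i\in I_j}\Vert\bm\xi_i-\bm\xi_{i_0}\Vert^l\leq (n-1)M$ for each $i_0\in I_j$, whence the triangle inequality gives $\Vert\bm\xi_i-\bm\zeta_j^\star\Vert^l\leq \bigl(1+(n-1)^{1/l}\bigr)^l M$ for all $i\in I$, so replacing $M$ by $M'=\bigl(1+(n-1)^{1/l}\bigr)^l M$ makes your two-inequality scheme go through verbatim for every norm. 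You should therefore either restrict your lemma (and the theorem) to the $2$-norm, or state it for general norms with this corrected big-$M$.
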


In problem~\eqref{opt:sceopt_miconvex}, the decision variable $\pi_{ij}$ determines whether or not the probability mass of atom $\bm{\xi}_i$ in the empirical distribution $\hat{\mathbb{P}}_n$ is shifted to the atom $\bm{\zeta}_j$ in the reduced distribution $\mathbb{Q}$, whereas the decision variable $c_i$ records the cost of moving the atom $\bm\xi_i$ under the transportation plan $\mathbf{\Pi}$.

\begin{proof} \emph{of Theorem~\ref{thm:sceopt_miconvex}} $\,$
We prove the statement by showing that optimal solutions to problem~\eqref{opt:sceopt_miconvex} correspond to feasible solutions in problem~\eqref{eq:voronoi-1} with the same objective function value in their respective problems and vice versa.

Fix a minimizer $(\mathbf{\Pi}^\star, \bm{c}^\star, \bm\zeta_1^\star,\hdots,\bm\zeta_m^\star)$ to problem~\eqref{opt:sceopt_miconvex}, which corresponds to a feasible solution $(\{ I_j \}, \bm\zeta_1^\star,\hdots,\bm\zeta_m^\star)$ in problem~\eqref{eq:voronoi-1} if we set $I_j = \{ i \in I \, : \, \pi_{ij}^\star = 1 \}$ for all $j \in J$. Note that $c_i^\star = \Vert \bm\xi_i - \bm\zeta_j^\star \Vert^l$ for $j \in J$ and $i \in I_j$. Thus, both solutions adopt the same objective value.

Conversely, fix a minimizer $(\{ I_j^\star \}, \bm\zeta_1^\star,\hdots,\bm\zeta_m^\star)$ to problem~\eqref{eq:voronoi-1}. This solution corresponds to a feasible solution $(\mathbf{\Pi}, \bm{c}, \bm\zeta_1^\star,\hdots,\bm\zeta_m^\star)$ to problem~\eqref{opt:sceopt_miconvex} if we set $\pi_{ij} = 1$ if $i \in I_j^\star$ and $\pi_{ij} = 0$ otherwise for all $j \in J$, as well as $c_i = \Vert \bm\xi_i - \bm\zeta_j^\star \Vert^l$ for all $i \in I_j$ and $j \in J$. By construction, both solutions adopt the same objective value.
\qed
\end{proof}

A solution $(\mathbf{\Pi}^\star, \bm{c}^\star, \bm{\zeta}^\star_1, \ldots, \bm{\zeta}^\star_m)$ to problem~\eqref{opt:sceopt_miconvex} allows us to recover the reduced distribution via $\mathbb{Q} = \frac{1}{n} \sum_{j=1}^m \mathbf{e}^\top \mathbf{\Pi}^\star \mathbf{e}_j \cdot \delta_{\bm{\zeta}^\star_j}$. Problem~\eqref{opt:sceopt_miconvex} has $nm$ binary and $n+md$ continuous variables as well as $nm+n$ constraints. We now show that~\eqref{opt:sceopt_miconvex} typically reduces to an MILP or a mixed-integer second-order cone program (MISOCP).

\begin{proposition}
\label{prop:sceopt_misocp}
For the type-$1$ Wasserstein distance induced by $\Vert \cdot \Vert_1$ or $\Vert \cdot \Vert_\infty$, problem~\eqref{opt:sceopt_miconvex} reduces to an MILP.  
For any type-$l$ Wasserstein distance induced by $\Vert \cdot \Vert_p$, where $l \geq 1$ and $p \geq 1$ are rational numbers, problem~\eqref{opt:sceopt_miconvex} reduces to an MISOCP.
\end{proposition}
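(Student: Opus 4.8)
The plan is to eliminate the only nonlinear ingredient of problem~\eqref{opt:sceopt_miconvex}, namely the constraints $\Vert\bm\xi_i-\bm\zeta_j\Vert^l\le c_i+M(1-\pi_{ij})$, by introducing auxiliary variables and, where necessary, second-order cone constraints. Throughout I would abbreviate the affine right-hand sides by $r_{ij}:=c_i+M(1-\pi_{ij})$ and note that $r_{ij}\ge 0$ because $c_i\ge 0$, $M\ge 0$ and $\pi_{ij}\in\{0,1\}$. Since Theorem~\ref{thm:sceopt_miconvex} already certifies that~\eqref{opt:sceopt_miconvex} is an exact reformulation with a linear objective $\frac1n\mathbf e^\top\bm c$, it suffices to show that each feasible region $\{\bm\zeta_j:\Vert\bm\xi_i-\bm\zeta_j\Vert^l\le r_{ij}\}$ admits a linear or second-order cone description while $\mathbf\Pi$ stays binary.

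For the two MILP cases ($l=1$) the argument is direct. Under $\Vert\cdot\Vert_\infty$ the constraint $\max_k|\xi_{i,k}-\zeta_{j,k}|\le r_{ij}$ is equivalent to the $2d$ linear inequalities $-r_{ij}\le\xi_{i,k}-\zeta_{j,k}\le r_{ij}$, $k=1,\dots,d$. Under $\Vert\cdot\Vert_1$ I would introduce epigraphical variables $w_{ijk}\ge 0$ with the linear constraints $w_{ijk}\ge\pm(\xi_{i,k}-\zeta_{j,k})$ and $\sum_k w_{ijk}\le r_{ij}$; at optimality $w_{ijk}=|\xi_{i,k}-\zeta_{j,k}|$, so the reformulation is exact. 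In both cases all constraints and the objective are linear, so~\eqref{opt:sceopt_miconvex} becomes an MILP.

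For the general MISOCP case with rational $l\ge 1$ and $p\ge 1$, the key idea is to upper-bound the $p$-norm \emph{before} applying the outer power, so that no fractional exponent ever acts on the inner sum $\sum_k|\xi_{i,k}-\zeta_{j,k}|^p$. Concretely, I would replace each nonlinear constraint by
\begin{equation*}
s_{ij}\ge\Vert\bm\xi_i-\bm\zeta_j\Vert_p,\qquad s_{ij}^{\,l}\le r_{ij},\qquad s_{ij}\ge 0.
\end{equation*}
This is exact, since $t\mapsto t^l$ is increasing on $\mathbb R_+$ forces $\Vert\bm\xi_i-\bm\zeta_j\Vert_p^l\le s_{ij}^{\,l}\le r_{ij}$, while $s_{ij}=\Vert\bm\xi_i-\bm\zeta_j\Vert_p$ is always admissible. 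Each of the two pieces is second-order cone representable. For the first, writing $u_{ijk}\ge|\xi_{i,k}-\zeta_{j,k}|$, the $p$-norm cone $\Vert\bm u_{ij}\Vert_p\le s_{ij}$ is equivalent to the existence of $v_{ijk}\ge 0$ with $\sum_k v_{ijk}\le s_{ij}$ and $u_{ijk}^{\,p}\le v_{ijk}\,s_{ij}^{\,p-1}$, and for rational $p$ each of these hyperbolic-type power constraints reduces to finitely many second-order cone constraints (see \cite{Alizadeh2003}). For the second, $s_{ij}^{\,l}\le r_{ij}$ with $s_{ij}\ge 0$ and rational $l\ge 1$ is the epigraph of a convex power function and is likewise second-order cone representable. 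Collecting these descriptions turns~\eqref{opt:sceopt_miconvex} into an MISOCP.

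The step I expect to require the most care is this general reduction, and within it the insight that one must first bound the norm by $s_{ij}$ and only afterwards raise it to the power $l$. A naive alternative that sets $Y_{ij}\ge\sum_k|\xi_{i,k}-\zeta_{j,k}|^p$ and imposes $Y_{ij}^{\,l/p}\le r_{ij}$ fails precisely when $l<p$---for instance in the practically important case $l=1$, $p=2$---because $t\mapsto t^{l/p}$ is then concave and the induced set is nonconvex. The two-stage decomposition sidesteps this, as the outer exponent $l\ge 1$ always yields a convex power. The remaining work, verifying the standard reduction of the $p$-norm cone and of the power constraint $s^l\le r$ to second-order cone constraints for rational exponents, is routine and can be imported from \cite{Alizadeh2003}.
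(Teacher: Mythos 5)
Your proposal is correct and follows essentially the same route as the paper's proof: linear epigraph reformulations of the $1$-norm and $\infty$-norm constraints for the MILP case, and for the MISOCP case the identical two-stage split $\phi_{ij}\geq\Vert\bm\xi_i-\bm\zeta_j\Vert_p$, $\phi_{ij}^{\,l}\leq c_i+M(1-\pi_{ij})$, with both pieces reduced to second-order cone constraints for rational exponents via \cite{Alizadeh2003}. Your added remark on why the norm must be bounded before applying the outer power (the naive single-constraint alternative is nonconvex when $l<p$) is a nice clarification, but the underlying argument coincides with the paper's.
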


\begin{proof} 
In view of the first statement, we note that $\Vert \bm\xi_i - \bm\zeta_j \Vert_1 \leq c_{i} + M (1 - \pi_{ij})$ is satisfied if and only if there is $\bm{\phi}_{ij} \in \mathbb{R}^d$ such that
\begin{equation*}
\bm{\phi}_{ij} \geq \bm\xi_i - \bm\zeta_j, \;\; \bm{\phi}_{ij} \geq \bm\zeta_j - \bm\xi_i \;\; \text{and} \;\; \mathbf{e}^\top \bm{\phi}_{ij} \leq c_{i} + M (1 - \pi_{ij}).
\end{equation*}
Likewise, $\Vert \bm\xi_i - \bm\zeta_j \Vert_\infty \leq c_{i} + M (1 - \pi_{ij})$ holds if and only if there is $\phi_{ij} \in \mathbb{R}$ with
\begin{equation*}
\phi_{ij} \mathbf{e} \geq \bm\xi_i - \bm\zeta_j, \;\; \phi_{ij} \mathbf{e} \geq \bm\zeta_j - \bm\xi_i \;\; \text{and} \;\; \phi_{ij} \leq c_{i} + M (1 - \pi_{ij}).
\end{equation*}

As for the second statement, we note that $\Vert \bm\xi_i - \bm\zeta_j \Vert_p^l \leq c_{i} + M (1 - \pi_{ij})$ is satisfied if and only if there is $\phi_{ij} \in \mathbb{R}$ such that
\begin{equation*}
\phi_{ij} \geq \Vert \bm\xi_i - \bm\zeta_j \Vert_p \;\; \text{and} \;\; \phi_{ij}^l \leq c_{i} + M (1 - \pi_{ij}).
\end{equation*}
For rational $l, p \geq 1$, both inequalities can be expressed through finitely many second-order cone constraints \citep[Section~2.3]{Alizadeh2003}.
\qed
\end{proof}

\section{Numerical Experiment: Color Quantization}
\label{section:experiment}

Color quantization aims to reduce the color palette of a digital image without compromising its visual appearance. In the standard RGB24 model colors are encoded by vectors of the form $(r, g, b) \in \{ 0, 1, \hdots, 255 \}^3$. This means that the RGB24 model can represent a vast number of 16$,$777$,$216 distinct colors. Consequently, color quantization serves primarily as a lossy image compression method.

In the following we interpret the color quantization problem as a discrete scenario reduction problem using the type-1 Wasserstein distance induced by the 1-norm on $\mathbb R^3$. Thus, we can solve color quantization problems via Dupa{\v{c}}ov{\'a}'s greedy heuristic, the local search algorithm or the exact MILP reformulation~\eqref{opt:scered_milp}. In our experiment we aim to compress all 24 pictures from the Kodak Lossless True Color Image Suite (\url{http://r0k.us/graphics/kodak/}) to~$m = 2^1, \hdots, 2^9$ colors. As the MILP reformulation scales poorly with $n$, we first reduce each image to $n \, \lesssim \, $1$,$024 colors using the Linux command ``convert -colors'', which is distributed through ImageMagick (\url{https://www.imagemagick.org}). We henceforth refer to the resulting 1$,$024-color images as the originals. 

 In all experiments we use an efficient variant of Dupa{\v{c}}ov{\'a}'s algorithm due to \cite{Heitsch2003} (\texttt{DPCV}), and we initialize the local search algorithm either with the color palette obtained from Dupa{\v{c}}ov{\'a}'s algorithm (\texttt{LOC-1}) or na\"ively with the $m$ most frequent colors of the original image (\texttt{LOC-2}). The MILP~\eqref{opt:scered_milp} is solved with GUROBI~7.0.1 (\texttt{MILP}). All algorithms are implemented in C++, and all experiments are executed on a 3.40GHz i7 CPU machine with 16GB RAM. We report the average and the worst-case runtimes in Table~\ref{tab:times}. Note that \texttt{DPCV}, \texttt{LOC-1} and \texttt{LOC-2} all terminate in less than 14 seconds across all instances, while \texttt{MILP} requires substantially more time (the maximum runtime was set to ten hours). Moreover, warmstarting the local search algorithm with the color palette obtained from \texttt{DPCV} can significantly reduce the runtimes.

\begin{table}[h]
\centering
\begin{tabular}{r|CCCC}
    & \texttt{DPCV} & \texttt{LOC-1} & \texttt{LOC-2} & \texttt{MILP} \\ \hline
	Average (secs)  & 2.16 & 2.60 & 3.59 & 1$,$349.71   \\
	Worst-case (secs) & 8.21 & 9.89 & 13.69 & 36$,$120.99   \\ \hline \hline
\end{tabular}
\caption{Runtimes of different methods for discrete scenario reduction. \label{tab:times}}
\end{table}

\begin{figure}[h]
 	\centering
    \begin{subfigure}[t]{0.3\textwidth}
    	\includegraphics[width=0.18\paperwidth]{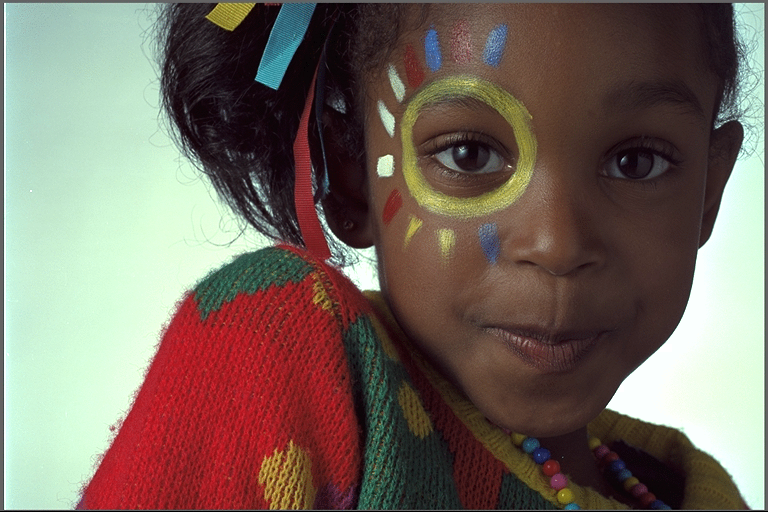}
        \caption{Original}
    \end{subfigure}
    ~~
    \begin{subfigure}[t]{0.3\textwidth}
    	\includegraphics[width=0.18\paperwidth]{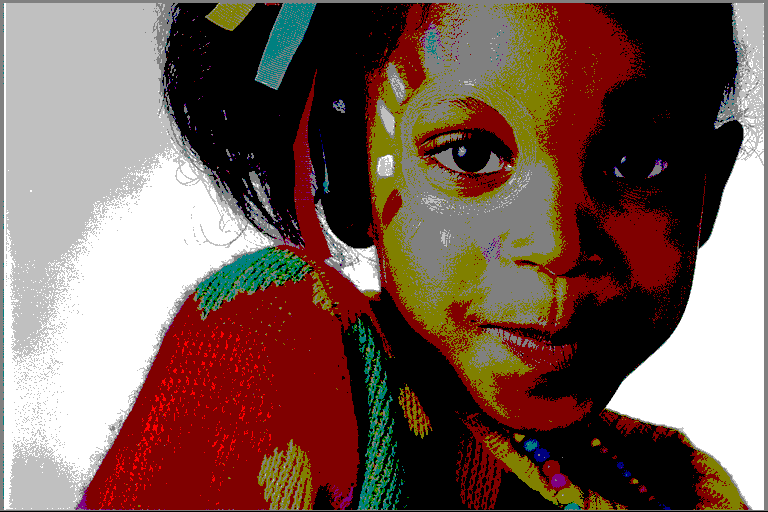}
        \caption{\texttt{MS Paint}}
    \end{subfigure}
    ~~
    \begin{subfigure}[t]{0.3\textwidth}
    	\includegraphics[width=0.18\paperwidth]{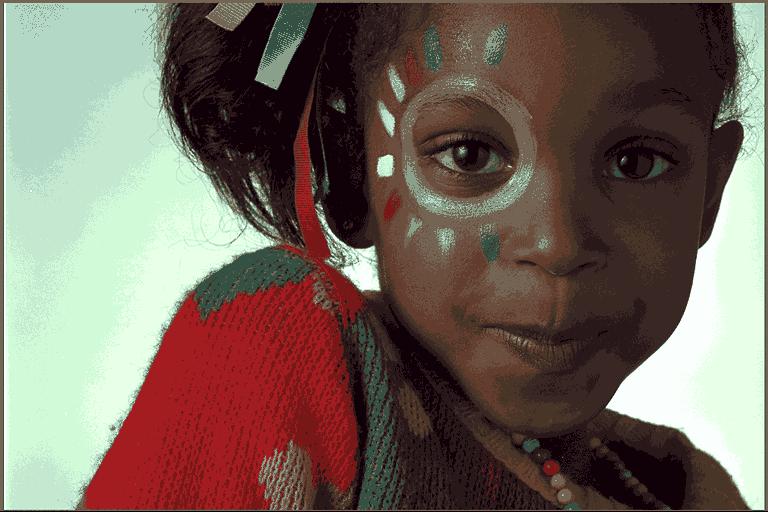}
        \caption{\texttt{DPCV}}
    \end{subfigure}
    ~\\[5mm]
    \begin{subfigure}[t]{0.3\textwidth}
    	\includegraphics[width=0.18\paperwidth]{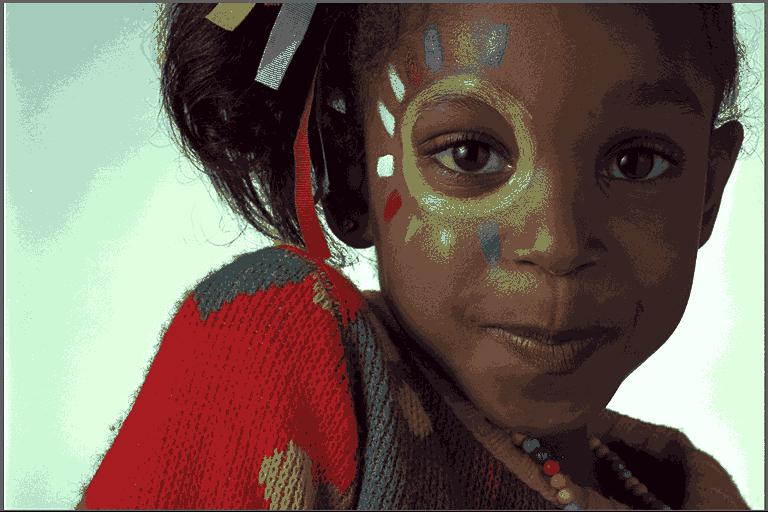}
        \caption{\texttt{LOC-1}}
    \end{subfigure}
    ~~
    \begin{subfigure}[t]{0.3\textwidth}
    	\includegraphics[width=0.18\paperwidth]{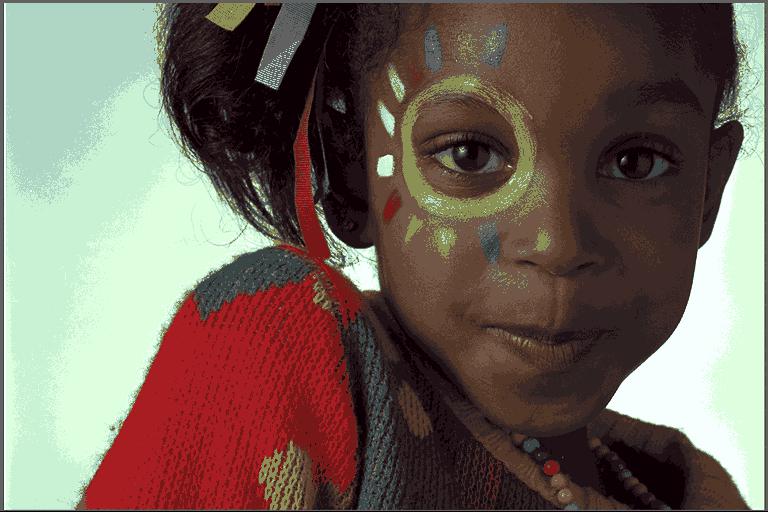}
        \caption{\texttt{LOC-2}}
    \end{subfigure}
    ~~
    \begin{subfigure}[t]{0.3\textwidth}
    	\includegraphics[width=0.18\paperwidth]{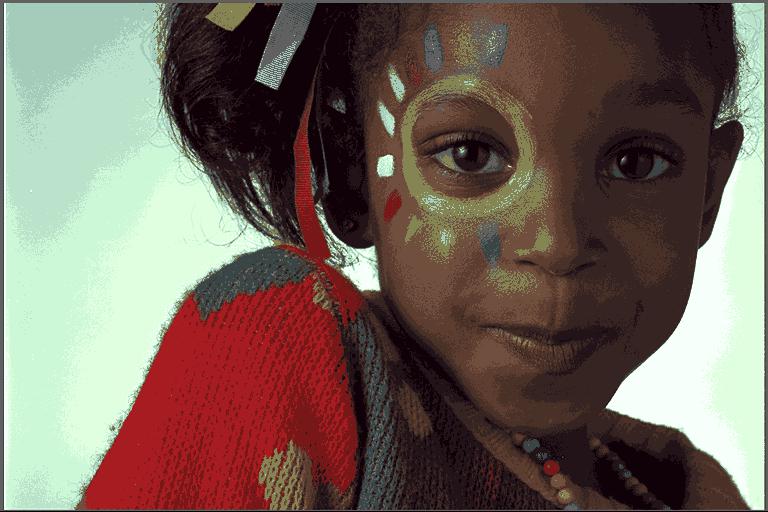}
        \caption{\texttt{MILP}}
    \end{subfigure}
    ~\\[5mm]
    \caption{Outputs of different color quantization algorithms for image ``kodim15.png''.}
    \label{fig:kodim}
\end{figure}

As an example, Figure~\ref{fig:kodim} shows the image~``kodim15.png'' as well as the results of different color quantization algorithms for $m=16$. While the outputs of \texttt{LOC-1}, \texttt{LOC-2} and \texttt{MILP} are almost indistinguishable, the output of \texttt{DPCV} has ostensible deficiencies ({\em e.g.}, it misrepresents the yellow color around the subject's eye). For comparison, we also show the output of the color quantization routine in Microsoft Paint (\texttt{MS Paint}). Figure~\ref{fig:compare} visualizes the optimality gaps of \texttt{DPCV}, \texttt{LOC-1} and \texttt{LOC-2} relative to \texttt{MILP} ({\em i.e.}, their respective approximation ratio $-$ $1$). Our experiment suggests that the local search algorithm is competitive with \texttt{MILP} in terms of output quality but at significantly reduced runtimes. Moreover, the local search algorithm \texttt{LOC-1} warmstarted with the color palette obtained from \texttt{DPCV} is guaranteed to outperform \texttt{DPCV} in terms of  optimality gaps.

\paragraph{Acknowledgements} This research was funded by the SNSF grant BSCGI0\_157733 and the EPSRC grants EP/M028240/1 and EP/M027856/1.

\begin{figure}[h]
    \includegraphics[width=0.57\paperwidth]{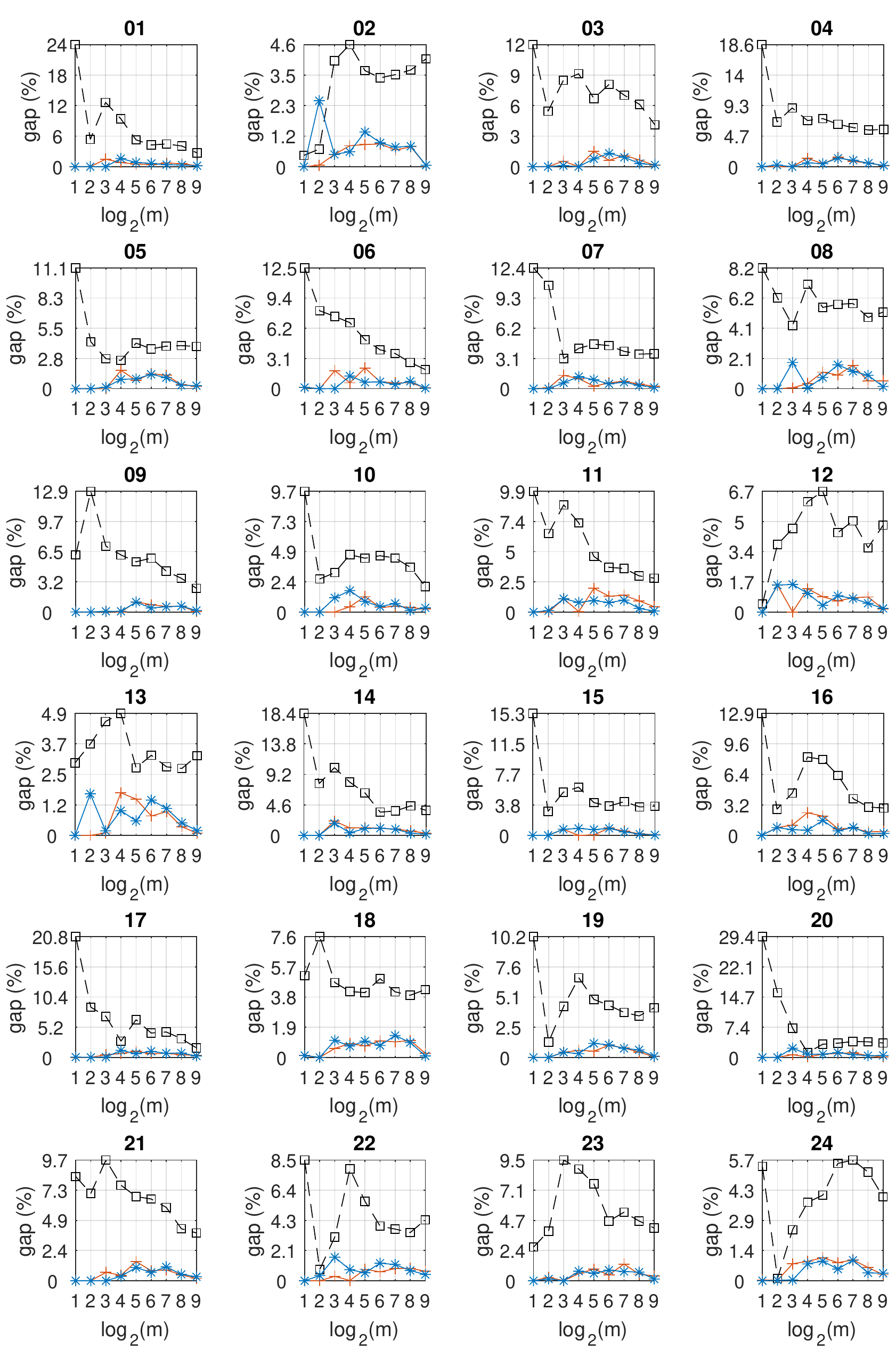}
    \caption{Optimality gaps of \texttt{DPCV} (dashed line with box), \texttt{LOC-1} (solid line with plus) and \texttt{LOC-2} (solid line with star) relative to \texttt{MILP}.}
    \label{fig:compare}
\end{figure}



\bibliographystyle{spbasic}      
\bibliography{references}

\newpage

\section*{Appendix: Auxiliary Results}

The proofs of Theorem~\ref{thm:wcwd-2} relies on the following two lemmas.

\begin{lemma}
\label{lem:symmetry}
The semidefinite program~\eqref{opt:symmetric_sdp} admits an optimal solution $(\tau, \mathbf{S})$ with $\mathbf{S} = \alpha \mathbb{I} + \beta \bm{11}^\top$ for some $\alpha, \beta \in \mathbb{R}$.
\end{lemma}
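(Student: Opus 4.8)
The plan is to exploit the invariance of the semidefinite program~\eqref{opt:symmetric_sdp} under the natural action of the symmetric group $S_n$ on the index set $I = \{1,\ldots,n\}$, and then to symmetrize an arbitrary optimal solution by group averaging. Concretely, for a permutation $\sigma \in S_n$ let $P_\sigma \in \{0,1\}^{n \times n}$ denote the associated permutation matrix, and consider the map $\mathbf{S} \mapsto P_\sigma \mathbf{S} P_\sigma^\top$ on $\mathbb{S}^n$. First I would verify that this map preserves the entire feasible set: the conic constraint $\mathbf{S} \succeq \bm{0}$ is preserved because conjugation by an orthogonal matrix preserves positive semidefiniteness; the constraints $s_{ii} \leq 1$ are merely permuted among themselves; and the family of partition constraints is mapped to itself, since evaluating the (linear) right-hand side $g_{\{I_j\}}(\cdot)$ of the first constraint at $P_\sigma \mathbf{S} P_\sigma^\top$ yields $g_{\{I_j\}}(P_\sigma \mathbf{S} P_\sigma^\top) = g_{\{\sigma^{-1}(I_j)\}}(\mathbf{S})$, where the relabelled partition $\{\sigma^{-1}(I_j)\}$ is again a member of $\mathfrak{P}(I,m)$. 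Since the objective $\tfrac{1}{n}\tau$ does not involve $\mathbf{S}$, the pair $(\tau^\star, P_\sigma \mathbf{S}^\star P_\sigma^\top)$ is feasible and optimal whenever $(\tau^\star, \mathbf{S}^\star)$ is.

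Next I would average over the group. Given an optimal solution $(\tau^\star, \mathbf{S}^\star)$---which exists because the feasible matrices $\mathbf{S}$ lie in a compact set (as $\mathbf{S} \succeq \bm{0}$ together with $s_{ii} \leq 1$ forces $|s_{ij}| \leq \sqrt{s_{ii} s_{jj}} \leq 1$) and the objective is continuous---define
\[
\bar{\mathbf{S}} \;=\; \frac{1}{n!} \sum_{\sigma \in S_n} P_\sigma \mathbf{S}^\star P_\sigma^\top .
\]
Because the feasible region of~\eqref{opt:symmetric_sdp} is convex and each summand is feasible together with the fixed value $\tau^\star$, the pair $(\tau^\star, \bar{\mathbf{S}})$ is again feasible. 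Here the partition constraints survive the averaging by linearity of $g_{\{I_j\}}(\cdot)$: from $\tau^\star \leq g_{\{I_j\}}(P_\sigma \mathbf{S}^\star P_\sigma^\top)$ for every $\sigma$ we deduce $\tau^\star \leq g_{\{I_j\}}(\bar{\mathbf{S}})$, while positive semidefiniteness and the bounds $\bar{s}_{ii} \leq 1$ are inherited from the corresponding averages. Thus $(\tau^\star, \bar{\mathbf{S}})$ is optimal.

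Finally I would identify the shape of $\bar{\mathbf{S}}$. By construction $\bar{\mathbf{S}}$ is invariant under conjugation by every permutation matrix, that is $P_\pi \bar{\mathbf{S}} P_\pi^\top = \bar{\mathbf{S}}$ for all $\pi \in S_n$. This invariance forces all diagonal entries to coincide (a transposition maps position $(i,i)$ to any $(i',i')$) and, likewise, all off-diagonal entries to coincide (every ordered pair of distinct indices can be mapped to every other). Writing the common diagonal value as $a$ and the common off-diagonal value as $b$ yields $\bar{\mathbf{S}} = (a - b)\mathbb{I} + b\,\mathbf{e}\mathbf{e}^\top$, which is of the claimed form with $\alpha = a - b$ and $\beta = b$.

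I would expect the only mildly delicate point to be the bookkeeping establishing the identity $g_{\{I_j\}}(P_\sigma \mathbf{S} P_\sigma^\top) = g_{\{\sigma^{-1}(I_j)\}}(\mathbf{S})$, which is what guarantees that the partition constraints are permuted among themselves rather than destroyed. Everything else---convexity of the feasible set, preservation of the objective under averaging, and the characterisation of permutation-invariant symmetric matrices---is standard, so I would keep those steps brief.
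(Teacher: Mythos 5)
Your proposal is correct and follows essentially the same route as the paper's own proof: both establish existence of an optimizer via compactness, observe that permuting the indices of $\mathbf{S}$ maps the partition constraints bijectively onto themselves (so each permuted solution remains feasible and optimal), average over all $n!$ permutations using convexity, and identify the permutation-invariant result as $\alpha\mathbb{I} + \beta\mathbf{e}\mathbf{e}^\top$. The only cosmetic difference is that you phrase the group action as conjugation $P_\sigma \mathbf{S} P_\sigma^\top$ while the paper writes it entrywise as $s^\sigma_{ij} = s^\star_{\sigma(i)\sigma(j)}$; these are the same map.
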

\begin{proof}
	Let $(\tau, \mathbf{S}^\star)$ be any optimal solution to~\eqref{opt:symmetric_sdp}, which exists because~\eqref{opt:symmetric_sdp} has a continuous objective function and a compact feasible set, and denote by $\mathfrak{S}$ the set of all permutations of~$I$. For any $\sigma\in\mathfrak{S}$, the permuted solution $(\tau, \mathbf{S}^\sigma)$, with $s^\sigma_{ij} = s^\star_{\sigma(i)\sigma(j)}$ is also optimal in~\eqref{opt:symmetric_sdp}. Note first that $(\tau, \mathbf{S}^\sigma)$ is feasible in~\eqref{opt:symmetric_sdp} because
\begin{equation*}
    \begin{aligned}
    	&\tau \leq \sum_{j \in J} \frac{1}{\vert I_j \vert^2} \sum_{i \in I_j} \Bigg( \vert I_j \vert^2 s^\sigma_{ii} - 2\vert I_j \vert \sum_{k \in I_j} s^\sigma_{ik} + \sum_{k \in I_j} s^\sigma_{kk} + \sum_{\substack{k,k^\prime \in I_j \\ k \neq k^\prime}} s^\sigma_{k k^\prime} \Bigg) \\
        \iff \ &\tau \leq \sum_{j \in J} \frac{1}{\vert I^\sigma_j \vert^2} \sum_{i \in I^\sigma_j} \Bigg( \vert I^\sigma_j \vert^2 s^\star_{ii} - 2\vert I^\sigma_j \vert \sum_{k \in I_j} s^\star_{ik} + \sum_{k \in I_j} s^\star_{kk} + \sum_{\substack{k,k^\prime \in I_j \\ k \neq k^\prime}} s^\star_{k k^\prime} \Bigg) ,
    \end{aligned}
    \end{equation*}
    where the index sets $I^\sigma_j=\{\sigma(i):~ i \in I_j \}$ for $j\in J$ form an $m$-set partition from within $\mathfrak{P}(I,m)$, and because $\mathbf{S}^\sigma\succeq \bm{0}$ and $s^\sigma_{ii} = s^\star_{\sigma(i)\sigma(i)}\leq 1$ for all $i\in I$ by construction. Moreover, it is clear that $(\tau, \mathbf{S}^\sigma)$ and $(\tau, \mathbf{S}^\star)$ share the same objective value in~\eqref{opt:symmetric_sdp}. Thus, $(\tau, \mathbf{S}^\sigma)$ is optimal in~\eqref{opt:symmetric_sdp} for every $\sigma\in\mathfrak{S}$.

    The convexity of problem~\eqref{opt:symmetric_sdp} implies that $(\tau,\mathbf{S})$ with $\mathbf{S}=\frac{1}{n!} \sum_{\sigma \in \mathfrak{S}} \mathbf{S}^\sigma$ is also optimal in~\eqref{opt:symmetric_sdp}. The claim follows by noting that $\mathbf{S}$ is invariant under permutations of the coordinates and thus representable as $\alpha\mathbb{I} + \beta\bm{11}^\top$ for some $\alpha, \beta \in \mathbb{R}$. 
    \qed
\end{proof}

\begin{lemma}
\label{lem:eigen}
For $\alpha, \beta \in \mathbb{R}$ the eigenvalues of $\mathbf{S} = \alpha \mathbb{I} + \beta \bm{11}^\top \in \mathbb{S}^n$ are given by $\alpha + n \beta$ (with multiplicity 1) and $\alpha$ (with multiplicity $n-1$).
\end{lemma}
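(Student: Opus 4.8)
The plan is to diagonalize $\mathbf{S}$ by exploiting the fact that $\mathbf{S}$ is a linear combination of the identity and the rank-one matrix $\bm{1}\bm{1}^\top$, which share a common eigenbasis. First I would analyze the eigenstructure of $\bm{1}\bm{1}^\top$ alone. Since $\bm{1}\bm{1}^\top \bm{1} = \bm{1}(\bm{1}^\top \bm{1}) = n \bm{1}$, the all-ones vector $\bm{1}$ is an eigenvector of $\bm{1}\bm{1}^\top$ with eigenvalue $n$. Conversely, any vector $\bm{v}$ in the orthogonal complement of $\bm{1}$ satisfies $\bm{1}^\top \bm{v} = 0$, so $\bm{1}\bm{1}^\top \bm{v} = \bm{1}(\bm{1}^\top \bm{v}) = \bm{0}$, showing that every such $\bm{v}$ is an eigenvector with eigenvalue $0$. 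As this orthogonal complement is an $(n-1)$-dimensional subspace, the matrix $\bm{1}\bm{1}^\top$ has eigenvalue $n$ with multiplicity $1$ and eigenvalue $0$ with multiplicity $n-1$.

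The second step is to transfer this eigenstructure to $\mathbf{S} = \alpha \mathbb{I} + \beta \bm{1}\bm{1}^\top$. Because the identity acts as a scalar on every vector, any eigenvector $\bm{v}$ of $\bm{1}\bm{1}^\top$ with eigenvalue $\mu$ is automatically an eigenvector of $\mathbf{S}$: indeed, $\mathbf{S}\bm{v} = \alpha \bm{v} + \beta (\bm{1}\bm{1}^\top \bm{v}) = \alpha \bm{v} + \beta \mu \bm{v} = (\alpha + \beta \mu)\bm{v}$. Substituting the two eigenvalues $\mu \in \{n, 0\}$ identified above yields the eigenvalues $\alpha + n\beta$ (inherited from the eigenvector $\bm{1}$, hence multiplicity $1$) and $\alpha$ (inherited from the $(n-1)$-dimensional orthogonal complement, hence multiplicity $n-1$), which is exactly the claim.

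There is essentially no substantive obstacle here, as the result is a routine consequence of the spectral shift $\mathbf{S} \mapsto \alpha \mathbb{I} + \beta \bm{1}\bm{1}^\top$ preserving eigenvectors while shifting eigenvalues affinely. The only point requiring a word of care is the multiplicity bookkeeping: one should note that the eigenvectors $\bm{1}$ and an orthonormal basis of $\bm{1}^\perp$ together form a basis of $\mathbb{R}^n$, so the listed eigenvalues with their stated multiplicities account for all $n$ eigenvalues of the symmetric matrix $\mathbf{S}$ and no eigenvalue is missed. This also confirms, in passing, the positive-semidefiniteness criterion used earlier in the proof of Theorem~\ref{thm:wcwd-2}, namely that $\mathbf{S} \succeq \bm{0}$ if and only if $\alpha \geq 0$ and $\alpha + n\beta \geq 0$.
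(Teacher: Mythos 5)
Your proof is correct, but it takes a genuinely different route from the paper. The paper observes that $\mathbf{S} = \alpha \mathbb{I} + \beta \mathbf{e}\mathbf{e}^\top$ is a \emph{circulant} matrix and invokes the classical formula for circulant eigenvalues, $\alpha + \beta(1 + \rho_j + \cdots + \rho_j^{n-1})$ with $\rho_j = e^{2\pi i j/n}$, then evaluates the geometric sums of roots of unity to get $\alpha + n\beta$ (for $j=0$) and $\alpha$ (for $j \neq 0$). You instead diagonalize the rank-one matrix $\mathbf{e}\mathbf{e}^\top$ directly---eigenvalue $n$ on the span of $\mathbf{e}$, eigenvalue $0$ on its $(n-1)$-dimensional orthogonal complement---and note that adding $\alpha \mathbb{I}$ shifts every eigenvalue by $\alpha$ while preserving eigenvectors. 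Your argument is more elementary and self-contained: it avoids complex exponentials and the citation to circulant matrix theory, and it exhibits an explicit orthogonal eigenbasis, which makes the multiplicity count transparent. The paper's approach is essentially a one-line appeal to a known formula, and it would apply unchanged to any circulant matrix, not just those of the form $\alpha \mathbb{I} + \beta \mathbf{e}\mathbf{e}^\top$; but for the specific matrix at hand your spectral-shift argument is the more natural one. Your closing remark correctly ties the lemma back to the positive-semidefiniteness characterization ($\alpha \geq 0$ and $\alpha + n\beta \geq 0$) used in the proof of Theorem~\ref{thm:wcwd-2}.
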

\begin{proof}
	Note that $\mathbf{S}$ is a circulant matrix, meaning that each of its rows coincides with the preceding row rotated by one element to the right. Thus, the eigenvalues of $\mathbf{S}$ are given by $\alpha + \beta(1 + \rho_j^1 + \hdots \rho_j^{n-1})$, $j=0,\ldots,n-1$, where $\rho_j=e^{2\pi i j/n}$ and~$i$ denotes the imaginary unit; see {\em e.g.} \cite{Gray01}. For $j=0$ we then obtain the eigenvalue $\alpha + n\beta$, and for $j= 1,\ldots, n-1$ we obtain the other $n-1$ eigenvalues, all of which equal $\alpha$ because $\sum_{k=0}^{n-1}e^{2\pi i jk/n}=(1-e^{2\pi i j})/(1-e^{2\pi i j/n})=0$. \qed
\end{proof}



\end{document}